\newcommand{\eps}{\varepsilon}
\begin{document}

\newcommand{\C}{\mathbb C}
\newcommand{\N}{\mathbb N}
\newcommand{\R}{\mathbb R}

\renewcommand{\Im}{{\mathrm{Im}}}
\renewcommand{\Re}{{\mathrm{Re}}}

\def\qqs{\ \forall \ }
\def \sprd#1,#2,#3{{\left( #1,#2\right)}_{#3}}
\newcommand{\lmd}{{\mathcal L}}

\newtheorem{theo}{Theorem}[section]
\newtheorem{prop}[theo]{Proposition}
\newtheorem{lem}[theo]{Lemma}
\newtheorem{cor}[theo]{Corollary}
\theoremstyle{definition}
\newtheorem{defi}[theo]{Definition}
\theoremstyle{remark}
\newtheorem{rem}[theo]{Remark}

\renewcommand{\cdots}{\dots}

\renewcommand{\theenumi}{\roman{enumi}}
\renewcommand{\labelenumi}{\theenumi)}

\title[The multiple tunnel effect on a
star-shaped network]{ The Klein-Gordon equation with multiple tunnel
effect on a star-shaped network: Expansions in generalized
eigenfunctions }

\author{F. Ali Mehmeti}

\address{%
Univ Lille Nord de France, F-59000 Lille, France \newline
\indent
 UVHC, LAMAV, FR CNRS 2956, F-59313 Valenciennes, France}

\email{felix.ali-mehmeti@univ-valenciennes.fr}

\author{R. Haller-Dintelmann}

\address{%
TU Darmstadt \\
Fachbereich Mathematik \\
Schlo{\ss}gartenstra{\ss}e 7\\
64289 Darmstadt\\
Germany}

\email{haller@mathematik.tu-darmstadt.de}

\author{V. R\'egnier}

\address{%
Univ Lille Nord de France, F-59000 Lille, France \newline
\indent
 UVHC, LAMAV, FR CNRS 2956, F-59313 Valenciennes, France}

\email{Virginie.Regnier@univ-valenciennes.fr}

\begin{abstract}
We consider the Klein-Gordon equation on a star-shaped network
composed of $n$ half-axes connected at their origins. We add a
potential which is constant but different on each branch. The
corresponding spatial operator is self-adjoint and we state explicit
expressions for its resolvent and its resolution of the identity in
terms of generalized eigenfunctions. This leads to a generalized
Fourier type inversion formula  in terms of an expansion in
generalized eigenfunctions.

The characteristics of the problem are marked by the non-manifold
character of the star-shaped domain. Therefore the approach via the
Sturm-Liouville theory for systems is not well-suited.

\end{abstract}

\subjclass{Primary 34B45; Secondary 42A38, 47A10, 47A60, 47A70}

\keywords{networks, spectral theory, resolvent, generalized eigenfunctions, functional calculus, evolution equations}

\thanks{Parts of this work were done, while the second author visited
the University of Valenciennes. He wishes to express his gratitude to
F. Ali Mehmeti and the LAMAV for their hospitality}

\maketitle

%
%
%
%
\section{Introduction}
%
%
%
%
\noindent This paper is motivated by the attempt to study the local
behavior of waves near a node in a network of one-dimensional media
having different dispersion properties. This leads to the study of a
star-shaped network with semi-infinite branches. Recent results in
experimental physics \cite{Nim, hai.nim}, theoretical physics
\cite{D-L} and functional analysis \cite{Alreg3,yas} describe new
phenomena created in this situation by the dynamics of the tunnel
effect: the delayed reflection and advanced transmission near nodes
issuing two branches. It is of major importance for the
comprehension of the vibrations of networks to understand these
phenomena near ramification nodes i.e.~nodes with at least 3
branches. The associated spectral theory induces a considerable
complexity  (as compared with the case of two branches) which is
unraveled in the present paper.

The dynamical problem can be described as follows:

\noindent
Let  $N_1, \cdots, N_n$ be $n$ disjoint copies of $(0,+ \infty)$ ($n \geq 2$).
Consider numbers  $a_k, c_k$ satisfying $0 < c_k$, for $k = 1,
\dots, n$ and $0 \leq a_1 \leq a_2 \leq \ldots \leq a_n < + \infty$.
Find a vector $(u_1, \dots, u_n)$ of functions $u_k: [0, +\infty)
\times \overline{N_k} \rightarrow \C$ satisfying the Klein-Gordon equations
\[ [\partial_t^2 - c_k \partial_x^2   + a_k ] u_k(t,x) = 0 \ , k = 1, \dots, n,
\]
on $N_1, \cdots, N_n$ coupled at zero by usual Kirchhoff conditions
and complemented with initial conditions for the functions $u_k$ and their
derivatives.

Reformulating this as an abstract Cauchy problem, one is confronted with the
self-adjoint operator $A = (-c_k \cdot \partial^2_x + a_k)_{k=1, \dots, n}$
in $L^2(N)$, with a domain that incorporates the Kirchhoff transmission
conditions at zero. For an exact definition of $A$, we refer to
Section~\ref{sec2}.

Invoking functional calculus for this operator, the solution can be given in
terms of
\[e^{\pm i \sqrt{A}t}u_0 \hbox{ and }e^{\pm i \sqrt{A}t}v_0.
\]
The refined study of transient phenomena thus requires concrete
formulae for the spectral representation of $A$. The seemingly
straightforward idea to view this task as a Sturm-Liouville problem
for a system (following \cite{weid2}) is not well-suited, because
the resulting expansion formulae do not take into account the
non-manifold character of the star-shaped domain. The ansatz used in
\cite{weid2} inhibits the exclusive use of generalized
eigenfunctions satisfying the Kirchhoff conditions. This is proved
in Theorem \ref{comp.weidm} in the appendix of this paper, which
furnishes the comparison of the two approaches.

A first attempt to use well-suited generalized eigenfunctions in the
ramified case but without tunnel effect \cite{AHR} leads to a
transformation whose inverse formula is different on each branch.
The desired results for two branches but
with tunnel effect are implicitly included in \cite{weid2}. For $n$
branches but with the same $c_k$ and $a_k$ on all branches a variant
of the above problem has been treated in \cite{Alreg1} using Laplace
transform in $t$.

In the present paper we start by following the lines of
\cite{AHR}. In Section~\ref{sec:exp.eig}, we define $n$ families of
generalized eigenfunctions of $A$, i.e.~formal solutions
$F_\lambda^k$ for $\lambda \in [a_1, + \infty)$ of the equation
             \[AF_\lambda^k = \lambda F_\lambda^k\]
satisfying the Kirchhoff conditions in zero, such that
$e^{\pm i \sqrt{\lambda}t}F_\lambda^k(x)$ represent incoming or
outgoing plane waves on all branches except $N_k$ for $\lambda \in
[a_n, + \infty)$. For $\lambda \in [a_p, a_{p+1})$, $1\leq p <n$ we have
no propagation but exponential decay in $n-p$ branches: this
expresses what we call the multiple tunnel effect, which is new with respect to
\cite{AHR}. Using variation of constants, we derive a formula
for the kernel of the resolvent of $A$ in terms of the
$F_\lambda^k$.

Following the classical procedure, in Section~\ref{app.sto} we
derive a limiting absorption principle for $A$, and then we insert
$A$ in Stone's formula to obtain a representation of the resolution
of the identity of $A$ in terms of the generalized eigenfunctions.

The aim of the paper, attained in Section~\ref{pla.the}, is the
analysis of the Fourier type transformation
\[ (V f)(\lambda):=
\bigl( (V f)_k(\lambda)\bigr)_{k = 1, \ldots, n}
:=\Bigl( \int_N f(x) \overline{(F_{\lambda}^{ k})}(x) dx
\Bigr)_{k = 1, \ldots, n}
\]
in view of constructing its inverse. We show that it diagonalizes
$A$ and determine a metric setting in which it is an isometry. This
permits to express regularity and compatibility of $f$ in terms
of decay of $Vf$.

Following \cite{AHR} up to the end would induce a major defect in
the last step of this program: the Plancherel type formula would
read
\[ \| f \|_H^2 = \Re \biggl\{ \sum_{j=1}^n \int_{\sigma(A)}
   \kappa_j(\lambda)
   \
   \bigl( V ( {\mathbf{1}}_{\overline{N_j}}  f ) \bigr)_{j+1}(\lambda)
   \
   (V f)_j (\lambda) \; d \lambda \biggr\},
\]
where the indices $j, j+1$ are considered modulo $n$, and
$\kappa_{j}$ is a suitable weight. This cyclic structure stems from
the underlying formula for the resolvent derived in
Section~\ref{sec:exp.eig}, which reflects the invariance of a
star-shaped network with respect to cyclic permutation of the
indices of the branches and thus the non-manifold character of the
domain. This feature inhibits the analysis of the decay properties
of the $(Vf)_k$: the finiteness of $\| f \|_{H}^2$ does not
automatically imply the decay of the terms on the right-hand side.
In fact, the cutoff by the characteristic function
${\mathbf{1}}_{\overline{N_j}}$ causes a poor decay in $\lambda$.

Consequently, the main objective of the present paper is the
elimination of the cyclic structure, which is inevitable in the
kernel of the resolvent, from the Plancherel type formula. To this
end, we use a symmetrization procedure leading to a true formula of
Plancherel type
\[ \| f \|_{H}^2=   \sum_{j=1}^n \int_{\sigma(A)}
   \sigma_j(\lambda)
   \
   |(Vf)_j(\lambda)|^2 \; d \lambda.
\]
This is carried out in Section~\ref{spe.rep} combining the
expression for the resolution of the identity $E(a,b)$ found in
Section~\ref{app.sto} with an ansatz for an expansion in generalized
eigenfunctions:
\[ f(x) = \int_a^b \sum_{l,m = 1}^n q_{lm}(\lambda)
     F_\lambda^{l}(x) (Vf)_k(\lambda)  \; d \lambda.
\]
This creates an $(3 n^2 + 1) \times n^2$ linear system for the $q_{lm}$,
whose solution leads to the result in Theorem~\ref{maintheo} and to
the Plancherel type formula.

A direct approach to the same symmetrization problem, carried out in
Section~\ref{dir.app}, yields a closed formula for the matrix $q$
based on $n\times n$ matrices. This approach is to a great extent
independent of the special setting and is thus supposed to be
generalizable.

In Section~\ref{pla.the} the desired inversion formula as well as
the Plancherel type theorem are stated. Finally the domains of the
powers of $A$ are characterized using the decay properties of
$Vf$. We see that $V$ exhibits all features of an ordered
spectral representation (see Definition XII.3.15, p. 1216 of
\cite{Dun}) except for the surjectivity, which is not essential for
applications. The spectrum has $n$ layers and it is $p$-fold on the
frequency band $[a_p,a_{p+1})$. This reflects a kind of continuous
Zeemann effect due to the perturbation caused by the constant,
semi infinite potentials on the $N_j$ given by the terms $a_j u_j$.
On this frequency band the generalized eigenfunctions have an
exponential decay on $n-p$ branches, expressing the multiple tunnel
effect.

Our results are designed to serve as tools in some pertinent
applications concerning the dynamics of the tunnel effect at
ramification nodes. In particular, we think of retarded reflection
(following \cite{Alreg3,hai.nim}), advanced
transmission qt barriers (following \cite{Nim,D-L,yas}),
$L^{\infty}$-time decay (following \cite{fam2,fam3}), the study of
more general networks of wave guides (for example microwave networks
\cite{poz}) and causality and global existence for nonlinear
hyperbolic equations (following \cite{Alreg4}).

Finally, let us comment on some related results.
The existing general literature on expansions in generalized
eigenfunctions (\cite{bere,weid,weid2} for example) does not seem to
be helpful for our kind of problem: their constructions start from
an abstractly given spectral representation. But in concrete cases
you do not have an explicit formula for it at the beginning.

In \cite{vbl.evl} the relation of the eigenvalues of the Laplacian
in an $L^{\infty}$-setting on infinite, locally finite networks to
the adjacency operator of the network is studied. The question of
the completeness of the corresponding eigenfunctions, viewed as
generalized eigenfunctions in an $L^2$-setting, could be asked.

In  \cite{kostr}, the authors consider general networks with
semi-infinite ends. They give a construction to compute some
generalized eigenfunctions from the coefficients of the transmission
conditions (scattering matrix). The eigenvalues of the associated
Laplacian are the poles of the scattering matrix and their
asymptotic behaviour is studied. But no attempt is made to construct
explicit inversion formulas for a given family of generalized eigenfunctions.

Spectral theory for the Laplacian on finite networks has been
studied since the 1980ies for example by J.P. Roth, J.v. Below, S.
Nicaise, F. Ali Mehmeti. A list of references can be found in \cite{fam1}.

In \cite{kra.sik} the transport operator is considered on finite
networks. The connection between the spectrum of the adjacency
matrix of the network and the (discrete) spectrum of the transport
operator is established. A generalization to infinite networks is
contained in \cite{dorn}.

Gaussian estimates for heat equations on networks have been proved
in \cite{Mug}.

For surveys on results on networks and multistructures,
cf.~\cite{Lum,exn}.

Many results have been obtained in spectral theory for elliptic
operators on various types of unbounded domains for example
\cite{werner,crd,AMMM,fam3}, cf.~ especially the references
mentioned in \cite{fam3}.

%
%
%
\section{Data and functional analytic framework} \label{sec2}
%
%
%
%
\noindent Let us introduce some notation which will be used
throughout the rest of the paper.

\medskip

\paragraph{\bf Domain and functions} Let $N_1, \cdots, N_n$
      be $n$ disjoint sets identified with $(0,+ \infty)$ ($n \in \N$,
      $n \geq 2$) and put $N :=
      \bigcup_{k=1}^n \overline{N_k}$, identifying the endpoints $0$, see
      \cite{fam1} for a detailed definition.
     Furthermore, we write
      $[a,b]_{N_k}$ for the interval $[a,b]$ in the branch $N_k$.
      For the notation of functions two viewpoints are used:
      \begin{itemize}
      \item functions $f$ on the object $N$ and $f_k$ is the restriction of
        $f$ to $N_k$.
      \item $n$-tuples of functions on the branches $N_k$; then sometimes we
            write $f = (f_1, \cdots, f_n)$.
      \end{itemize}
\paragraph{\bf Transmission conditions}
      \begin{align*}
        \strut \text{($T_0$): } & (u_k)_{k=1,\dots,n} \in
               \prod_{k=1}^n C^0(\overline{N_k}) \text{ satisfies } u_i(0) =
               u_k(0) \qqs i,k \in \{ 1, \cdots, n \}.
        \intertext{This condition in particular implies that $(u_k)_{k=1, \dots, n}$ may
            be viewed as a well-defined function on $N$.}
        \text{($T_1$): } & (u_k)_{k=1,\dots,n} \in \prod_{k=1}^n
               C^1(\overline{N_k}) \text{ satisfies } \sum_{k=1}^n c_k \cdot
               \partial_x u_k(0^+) = 0.
      \end{align*}
\paragraph{\bf Definition of the operator} Define the real Hilbert
      space $H = \prod_{k=1}^n L^2(N_k)$ with scalar product
      \[ (u,v)_H = \sum_{k=1}^n (u_k,v_k)_{L^2(N_k)}
      \]
      and the operator $A: D(A) \longrightarrow H$ by
      \[ \begin{aligned}
            D(A) &= \Bigl\{ (u_k)_{k=1, \dots, n} \in \prod_{k=1}^n H^2(N_k) :
        (u_k)_{k=1, \dots, n} \text{ satisfies } (T_0) \text{ and }
        (T_1) \Bigr\}, \\
            A((u_k)_{k=1, \cdots, n}) &= (A_ku_k)_{k=1, \cdots, n} = (-c_k \cdot \partial^2_x
            u_k + a_k u_k
                  )_{k=1, \cdots, n}.
         \end{aligned}
      \]
      Note that, if $c_k = 1$ and $a_k=0$ for every $k \in \{1, \cdots, n \}$,
      $A$ is the Laplacian in the sense of the existing literature, cf.
    \cite{vbl.evl, kostr, Mug}.
\paragraph{\bf Notation for the resolvent} The resolvent of an operator $T$ is
      denoted by $R$, i.e. $R(z,T) = (z I -T)^{-1}$ for $z \in \rho(T)$.
%
\begin{prop}    \label{selfadjoint}
The operator $A: D(A) \rightarrow H$ defined above is self-adjoint
and satisfies $\sigma(A) \subset [a_1,+\infty)$.
\end{prop}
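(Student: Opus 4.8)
The plan is to establish symmetry and a lower bound by integration by parts, then to upgrade symmetry to self-adjointness via the sesquilinear-form (Kato) representation theorem, with the spectral localization following automatically.

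First I would verify that $A$ is symmetric. For $u,v\in D(A)$, integrating by parts twice on each branch gives
\[
  (Au,v)_H-(u,Av)_H=\sum_{k=1}^n c_k\bigl[u_k'\,\overline{v_k}-u_k\,\overline{v_k'}\bigr]_{x=0}^{x=+\infty}.
\]
Since $u_k,v_k\in H^2(N_k)$ on a half-line, $u_k,u_k',v_k,v_k'$ vanish at $+\infty$, so only the node contributes. By $(T_0)$ all $u_k(0^+)$ share a common value $u(0)$ and all $v_k(0^+)$ a common value $v(0)$; factoring these out and invoking the Kirchhoff condition $(T_1)$, i.e.\ $\sum_k c_k u_k'(0^+)=0$ and $\sum_k c_k v_k'(0^+)=0$, annihilates the whole boundary term. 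Hence $A$ is symmetric, and $D(A)$ is clearly dense (it contains all tuples of $C_c^\infty$-functions supported away from the node). The same computation with $v=u$, together with the vanishing of the nodal term by $(T_1)$, yields
\[
  (Au,u)_H=\sum_{k=1}^n\int_0^\infty\bigl(c_k|u_k'|^2+a_k|u_k|^2\bigr)\,\dd x\ \ge\ a_1\|u\|_H^2,
\]
using $c_k>0$ and $a_k\ge a_1$ for all $k$; thus $A$ is bounded below by $a_1$.

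The core of the argument is to promote symmetry to self-adjointness, which I would do through the form $\mathcal{Q}(u,v)=\sum_k\int_0^\infty\bigl(c_k u_k'\,\overline{v_k'}+a_k u_k\,\overline{v_k}\bigr)\,\dd x$ with form domain $V=\{u\in\prod_k H^1(N_k):u\text{ satisfies }(T_0)\}$. This form is densely defined, symmetric, and, by the bound above extended to $V$, closed and bounded below by $a_1$; Kato's representation theorem then yields a unique self-adjoint operator $A_{\mathcal{Q}}\ge a_1$. It remains to identify $A_{\mathcal{Q}}$ with $A$. The inclusion $A\subset A_{\mathcal{Q}}$ is immediate. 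For the converse, take $u\in D(A_{\mathcal{Q}})$; testing $\mathcal{Q}(u,v)=(A_{\mathcal{Q}}u,v)_H$ first against $v\in C_c^\infty(N_k)$ recovers $u_k\in H^2(N_k)$ with $-c_ku_k''+a_ku_k=(A_{\mathcal{Q}}u)_k$, and feeding this back while integrating by parts leaves the residual boundary term $\sum_k c_k u_k'(0^+)\,\overline{v_k(0^+)}$, which must vanish for every $v\in V$. Since $V$ constrains only the common nodal value of $v$, this forces exactly the Kirchhoff condition $(T_1)$, whence $D(A_{\mathcal{Q}})=D(A)$ and $A=A_{\mathcal{Q}}$ is self-adjoint. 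Alternatively, as $A$ is symmetric and semibounded, one may instead prove directly that $\mathrm{Ran}(A-\lambda_0)=H$ for a real $\lambda_0<a_1$: on branch $k$ the homogeneous equation has the decaying solution $e^{-\mu_k x}$ with $\mu_k=\sqrt{(a_k-\lambda_0)/c_k}>0$, one builds $u_k$ from the half-line Green's function plus a multiple of $e^{-\mu_k x}$, and fixes the free constants by the conditions $(T_0)$ and $(T_1)$, checking that the resulting small linear system is uniquely solvable.

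Finally, self-adjointness together with $A\ge a_1$ gives $\sigma(A)\subset[a_1,+\infty)$ at once, since the spectrum of a self-adjoint operator lies in the closure of its numerical range. The step I expect to be the main obstacle is the domain identification $D(A_{\mathcal{Q}})=D(A)$ (equivalently, the unique solvability of the transmission system in the direct route): this is precisely where the non-standard coupling at the node must be shown to emerge as the \emph{natural} boundary condition, and where the interplay of the weights $c_k$ has to be tracked carefully.
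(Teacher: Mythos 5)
Your proof is correct and takes essentially the same route as the paper: both realize $A$ as the self-adjoint operator associated with the semibounded, closed form $\sum_k \int_0^\infty \bigl( c_k |u_k'|^2 + a_k |u_k|^2 \bigr)\,dx$ on the form domain $V=\{u\in\prod_k H^1(N_k): u \text{ satisfies } (T_0)\}$ --- the paper via the Friedrichs extension of the $\eps$-shifted coercive form, you via Kato's representation theorem. Your explicit domain identification $D(A_{\mathcal{Q}})=D(A)$ merely fills in the step the paper compresses into ``by partial integration one shows that the Friedrichs extension \dots is $(A+(\eps-a_1)I, D(A))$'', and it is carried out correctly.
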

\begin{proof}
Consider the Hilbert space
\[ V = \biggl\{ (u_k)_{k=1, \dots, n} \in \prod_{k=1}^n H^1(N_k) : (u_k)_{k=1, \dots, n}
\text{ satisfies } (T_0) \biggr\}
\]
with the canonical scalar product $(\cdot, \cdot)_V$. Then the
bilinear form associated with $A +(\eps - a_1)I$ is $a_\eps : V
\times V \to \C$ with
\[ a_\eps(u,v) = \sum_{k=1}^n \bigl[ c_k (\partial_x u_k, \partial_x
v_k)_{L^2(N_k)} + (a_k + \eps - a_1)(u_k, v_k)_{L^2(N_k)} \bigr].
\]
Then clearly there is a $C > 0$ with $a_\eps(u,u) \ge C (u,u)_V$ for
all $u \in V$ and all $\eps > 0$. By partial integration one shows
that the Friedrichs extension of $(a_\eps, V, H)$ is $(A + (\eps -
a_1)I, D(A))$. Thus the operator $A + (\eps - a_1)I$ is self-adjoint
and positive. Hence $\sigma(A + (\eps - a_1)I) \subset [0, +\infty)$
for all $\eps > 0$, what implies the assertion on the spectrum.
\end{proof}

%
%
%
%
\section{Expansion in generalized eigenfunctions} \label{sec:exp.eig}
%
%
%
%
\noindent
The aim of this section is to find an explicit expression for the
kernel of the resolvent of the operator $A$ on the star-shaped
network defined in the previous section.
%
\begin{defi}
An element $f \in \prod_{k=1}^{n} C^{\infty}(\overline{N_k})$ is
called \emph{generalized eigenfunction} of $A$, if it satisfies
$(T_0)$, $(T_1)$ and formally the differential equation $Af =
\lambda f$ for some $\lambda \in \C$.
\end{defi}
%
\begin{lem}[Green's formula on the star-shaped network] \label{greens.formula}
Denote by $V_{l_1, \cdots, l_n}$ the subset of the network $N$ defined
by
\[ V_{l_1,...,l_n} = \{ 0 \} \cup \bigcup_{k=1}^n (0, l_k)_{N_k}.
\]
Then $u, v \in D(A)$ implies
\[ \int_{V_{l_1, \cdots, l_n}} \!\!\!\!\! u''(x) v(x) \; dx =
   \int_{V_{l_1, \cdots, l_n}} \!\!\!\!\! u(x) v''(x) \; dx - \sum_{k=1}^n
   u(l_k) v'(l_k) + \sum_{k=1}^n u'(l_k) v(l_k).
\]
\end{lem}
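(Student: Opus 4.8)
The plan is to reduce the network integral to a sum of ordinary one-dimensional integrals over the branches and to apply the classical Green's identity twice on each branch, carefully collecting the boundary contributions at the two outer endpoints $l_k$ and at the common central node $0$. Since $V_{l_1,\dots,l_n}$ is, up to the single point $0$, the disjoint union of the intervals $(0,l_k)_{N_k}$, both sides split branchwise as $\int_{V_{l_1,\dots,l_n}} u'' v\,dx = \sum_{k=1}^n \int_0^{l_k} u_k''(x)\,v_k(x)\,dx$ and similarly for $\int u v''$. Every pointwise trace used below is well defined because $u_k,v_k \in H^2(N_k)\hookrightarrow C^1(\overline{N_k})$, so $u_k,u_k',v_k,v_k'$ possess boundary values at $0^+$ and at $l_k$.

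First I would integrate by parts twice on a single branch and subtract; the interior terms $\int_0^{l_k} u_k'v_k'$ cancel, leaving
\[
\int_0^{l_k}\! u_k'' v_k\,dx - \int_0^{l_k}\! u_k v_k''\,dx = \bigl[u_k'(l_k)v_k(l_k) - u_k(l_k)v_k'(l_k)\bigr] - \bigl[u_k'(0^+)v_k(0^+) - u_k(0^+)v_k'(0^+)\bigr].
\]
Summing over $k$, the outer-endpoint contributions assemble into exactly the two sums $\sum_k u'(l_k)v(l_k)$ and $-\sum_k u(l_k)v'(l_k)$ of the claim, so the whole statement reduces to showing that the central-node contribution
\[
B_0 := \sum_{k=1}^n \bigl[u_k'(0^+)v_k(0^+) - u_k(0^+)v_k'(0^+)\bigr]
\]
vanishes.

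Establishing $B_0 = 0$ is the decisive step and the only place where full membership in $D(A)$, rather than mere branchwise $H^2$-regularity, is exploited. Here the transmission conditions enter: by the continuity condition $(T_0)$ the traces $u_k(0^+)$ and $v_k(0^+)$ are independent of $k$, so, writing $u(0)$ and $v(0)$ for these common values and factoring them out, $B_0$ collapses to $v(0)\sum_k u_k'(0^+) - u(0)\sum_k v_k'(0^+)$, a combination of the sums of one-sided derivatives that the Kirchhoff condition $(T_1)$ is designed to annihilate. I expect this node bookkeeping to be the main obstacle: the double integration by parts is routine, whereas one must check carefully that the derivative sum produced at $0$ is matched by the coefficients appearing in $(T_1)$, so that $(T_1)$ genuinely forces $B_0 = 0$ and the identity follows.
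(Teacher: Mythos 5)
Your proof is correct and follows essentially the same route as the paper's: branchwise double integration by parts, with the outer endpoints producing the two boundary sums and the central-node contribution collapsing via $(T_0)$ and then being annihilated via $(T_1)$. The coefficient concern you raise at the end is legitimate --- $(T_1)$ kills the weighted sum $\sum_k c_k u_k'(0^+)$ rather than the unweighted one, so the identity as literally stated needs either equal $c_k$ or the $c_k$-weighted integrands that are in fact used when the lemma is applied later --- but the paper's own proof passes over this point in exactly the same way.
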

%
%
\begin{proof}
Two successive integrations by parts are used and since both $u$ and
$v$ belong to $D(A)$, they both satisfy the transmission conditions
$(T_0)$ and $(T_1)$. So
\[ \sum_{k=1}^n u_k(0) v'_k(0)= u_1(0) \sum_{k=1}^n v'_k(0) = 0.
\]
Idem for $\sum_{k=1}^n u'_k(0) v_k(0)$.
\end{proof}
This Green formula yields now as usual an expression for the resolvent of
$A$ in terms of the generalized eigenfunctions.
%
\begin{prop}
\label{expr.resol}
Let $\lambda \in \C$ with $\Im(\lambda) \neq 0$ be fixed and let
$e^{\lambda}_1$, $e^{\lambda}_2$ be generalized eigenfunctions of $A$, such
that the Wronskian $w_{1,2}^{\lambda}(x)$ satisfies for every $x$ in $N$
\[ w_{1,2}^{\lambda}(x) = \det W(e^{\lambda}_1(x),e^{\lambda}_2(x)) =
   e^{\lambda}_1(x) \cdot (e_2^{\lambda})'(x) - (e_1^{\lambda})'(x) \cdot
   e^{\lambda}_2(x) \neq 0.
\]
If for some $k \in \{1, \dots, n \}$ we have $e_2^\lambda |_{N_k} \in
H^2(N_k)$ and $e_1^{\lambda}|_{N_m} \in H^2(N_m)$ for all $m \neq k$, then for
every $f \in H$ and $x \in N_k$
\begin{equation} \label{E}
  [R(\lambda, A) f](x) = \frac{1}{c_k w_{1,2}^{\lambda}(x)} \cdot
    \left[ \int_{(x,+\infty)_{N_k}} \hspace{-.5cm} e^{\lambda}_1(x)
    e^{\lambda}_2(x') f(x') \; dx' + \int_{N \setminus (x,+\infty)_{N_k}}
    \hspace{-.8cm} e^{\lambda}_2(x) e^{\lambda}_1(x') f(x') \; dx'
    \right].
\end{equation}
\end{prop}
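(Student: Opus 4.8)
The plan is not to verify \eqref{E} directly, but to \emph{derive} it from the genuine resolvent using the Green-type identity of Lemma~\ref{greens.formula}. Since $A$ is self-adjoint with $\sigma(A)\subset\R$ by Proposition~\ref{selfadjoint} and $\Im(\lambda)\neq0$, we have $\lambda\in\rho(A)$, so $g:=R(\lambda,A)f$ is the unique element of $D(A)$ solving $(\lambda I-A)g=f$, i.e.\ $c_k g_k''+(\lambda-a_k)g_k=f_k$ on each branch. Fixing $x\in N_k$, I would recover the two numbers $g(x)$ and $g'(x)$ by testing $g$ against the eigenfunctions on two complementary pieces of the network.

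First I would apply the identity underlying Lemma~\ref{greens.formula} (two integrations by parts per branch, in the $c_k$-weighted form adapted to $A$) to the pair $(g,e_2^\lambda)$ on the outer segment $(x,l_k)_{N_k}$, and to the pair $(g,e_1^\lambda)$ on the complementary truncated region $\{0\}\cup(0,x)_{N_k}\cup\bigcup_{m\neq k}(0,l_m)_{N_m}$. Because $A e_i^\lambda=\lambda e_i^\lambda$ and $A g=\lambda g-f$, the bulk integrand $(Ag)e_i^\lambda-g\,(Ae_i^\lambda)$ collapses to $-f\,e_i^\lambda$. The contributions at the node $0$ cancel exactly as in Lemma~\ref{greens.formula}: invoking $(T_1)$ for both $g$ and $e_i^\lambda$ makes $\sum_m c_m\bigl(g_m'(0^+)e_i^\lambda(0)-g(0)(e_i^\lambda)_m'(0^+)\bigr)=0$. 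After letting all $l_m\to+\infty$ this leaves two linear relations, namely $c_k\bigl(g(x)(e_2^\lambda)'(x)-g'(x)e_2^\lambda(x)\bigr)=\int_{(x,+\infty)_{N_k}}f e_2^\lambda$ and $c_k\bigl(g'(x)e_1^\lambda(x)-g(x)(e_1^\lambda)'(x)\bigr)=\int_{N\setminus(x,+\infty)_{N_k}}f e_1^\lambda$.

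The step I expect to be the main obstacle is the justification that the boundary terms at the truncation points $l_m$ disappear in the limit; this is exactly where the integrability hypotheses on the generalized eigenfunctions enter. On $N_k$ both $g$ (as $g\in D(A)\subset\prod_k H^2(N_k)$) and $e_2^\lambda$ (by assumption $e_2^\lambda|_{N_k}\in H^2(N_k)$), together with their first derivatives, tend to $0$ at infinity; on each $N_m$ with $m\neq k$ the same holds for $g$ and for $e_1^\lambda$, since $e_1^\lambda|_{N_m}\in H^2(N_m)$. Hence every surviving far-field product $g'e_i^\lambda-g(e_i^\lambda)'$ vanishes, using the embedding $H^1(N_m)\hookrightarrow C_0(\overline{N_m})$. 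A second, minor point to address is that Lemma~\ref{greens.formula} is stated for two elements of $D(A)$, whereas here the second slot is a generalized eigenfunction restricted to a subregion; but the node cancellation only uses $(T_0)$ and $(T_1)$, which the $e_i^\lambda$ satisfy, so the identity extends verbatim. Finally the two relations form a linear system for $(g(x),g'(x))$ whose determinant is $c_k^2\,w_{1,2}^\lambda(x)\neq0$; solving for $g(x)$ by Cramer's rule produces precisely the right-hand side of \eqref{E}, which completes the proof.
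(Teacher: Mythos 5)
Your proposal is correct and is essentially the paper's own argument: both rest on the same Green-type identities over the decomposition of $N$ into $(x,+\infty)_{N_k}$ and its complement, the same vanishing of far-field boundary terms via the $H^2$ hypotheses, and the same Kirchhoff cancellation at the node. The only cosmetic difference is direction — the paper verifies that the proposed kernel is a left inverse of $\lambda I - A$ on $D(A)$, whereas you start from $g = R(\lambda,A)f$ and solve a $2\times 2$ system by Cramer's rule; the linear combination $e_1^\lambda(x)\cdot(\text{eq}_1)+e_2^\lambda(x)\cdot(\text{eq}_2)$ that eliminates $g'(x)$ is exactly the computation the paper performs.
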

%
\noindent Note that by the integral over $N$, we mean the sum of the integrals
over $N_k$, $k=1, \cdots, n$.
\begin{proof}
Let $\lambda \in \rho (A)$. We shall show that the integral operator
defined by the right-hand side of (\ref{E}) is a left inverse of
$\lambda I - A$. Let $u \in D(A)$ and $x \in N_k$. Then
\begin{align*}
    I_\lambda :=& \int_{(x,+\infty)_{N_k}} e^{\lambda}_1(x) e^{\lambda}_2(x')
        (\lambda I - A) u(x')   \; dx' +  \int_{N \setminus (x,+\infty)_{N_k}}
    e^{\lambda}_2(x) e^{\lambda}_1(x') (\lambda I - A) u(x') \; dx' \\
   =& \; e^{\lambda}_1(x) \lim_{l_k \to \infty}
    \int_{x}^{l_k}e^{\lambda}_2(x')(\lambda I - A) u(x') \; dx' \\
   & \qquad \quad \strut + e^{\lambda}_2(x) \lim_{l_m \to \infty, m \not= k}
    \int_{ V_{l_1,\ldots,l_{k-1},x,l_{k+1},\ldots,l_n}}
    e^{\lambda}_1(x')(\lambda I - A) u(x') \; dx',
\end{align*}
due to the dominated convergence Theorem, the integrands being in $L^1(\R)$ by
the hypotheses.

We have $u \in D(A) \subset \prod_{j=1}^n H^2(N_j)$ and
\[ e^{\lambda}_2 \vert_{N_k} \in H^2(N_k), \quad e^{\lambda}_1 \vert_{N_m} \in
    H^2(N_m), \ m \neq k
\]
 by hypothesis and thus
\begin{alignat*}{2}
 \partial_x u\vert_{N_k}(x)\cdot e^{\lambda}_2 \vert_{N_k}(x)
    &\mathop{\longrightarrow}_{x \to +\infty} 0, \qquad \qquad
    u\vert_{N_k}(x)\cdot \partial_x e^{\lambda}_2 \vert_{N_k}(x)
    &&\mathop{\longrightarrow}_{x \to +\infty} 0, \\
 \partial_x u\vert_{N_m}(x)\cdot e^{\lambda}_1 \vert_{N_m}(x)
    &\mathop{\longrightarrow}_{x \to +\infty} 0, \qquad \qquad
    u\vert_{N_m}(x)\cdot \partial_x e^{\lambda}_1 \vert_{N_m}(x)
    &&\mathop{\longrightarrow}_{x \to +\infty} 0, \ m \neq k,
\end{alignat*}
all products being in some $H^2(N_j)$. Recall that
\[ \int_{a}^{b} f'' g  = \int_{a}^{b} f g'' - f(b)g'(b) + f'(b)g(b) +
    f(a)g'(a) - f'(a)g(a)
\]
for $f,g \in H^2((a,b))$.
Now Lemma~\ref{greens.formula} and $(\lambda I - A)e^{\lambda}_r = 0$ for
$r =1 ,2$ imply
\begin{align*}
  I_\lambda  &= e^{\lambda}_1(x)
    \lim_{l_k \to \infty} \Bigl[ \int_{x}^{l_k} (\lambda I - A)
    e^{\lambda}_2(x') u(x') \; dx' \\
  & \qquad \strut + c_k \Bigl( -u(l_k) \partial_x e^{\lambda}_2(l_k) +
    \partial_x u(l_k) e^{\lambda}_2(l_k) + u(x) \partial_x
    e^{\lambda}_2(x) - \partial_x u(x) e^{\lambda}_2(x) \Bigr) \Bigr] \\
  & \qquad \strut + e^{\lambda}_2(x) \Bigl[ \lim_{l_m \to \infty, m \neq k}
    \int_{V_{l_1,\ldots,l_{k-1},x,l_{k+1},\ldots,l_n}} (\lambda I - A)
    e^{\lambda}_1(x') u(x') \; dx' \\
  & \qquad \strut + \sum_{j \neq k} c_j \lim_{l_j \to \infty} \Bigl( -u(l_j)
    \partial_x e^{\lambda}_1(l_j) + \partial_x u(l_j) e^{\lambda}_1(l_j)
    \Bigr) + c_k \Bigl( -u(x) \partial_x e^{\lambda}_1(x) + \partial_x
    u(x) e^{\lambda}_1(x) \Bigr) \Bigr] \displaybreak[0]\\
  &= c_k \Bigl(  e^{\lambda}_1(x) \partial_x e^{\lambda}_ 2(x) - \partial_x
    e^{\lambda}_1(x) e^{\lambda}_2(x) \Bigr) u(x) \\
  &= c_k w^{\lambda}_{1,2}(x) u(x).
\end{align*}
Now the invertibility of $\lambda I - A$ implies the result.
\end{proof}
%
%
%
\begin{defi}[Generalized eigenfunctions of $A$]  \label{gen.eig}
For $k \in \{ 1, \dots, n\}$ and $\lambda \in \C$ let
\[ \xi_k(\lambda) := \sqrt{\frac{\lambda - a_k}{c_k}} \qquad \text{and} \qquad
   s_k := - \frac{\sum_{l \neq k} c_l \xi_l(\lambda)}{c_k \xi_k(\lambda)}.
\]
Here, and in all what follows, the complex square root is chosen in such a way
that $\sqrt{r \cdot e^{i \phi}} = \sqrt{r} e^{i \phi/2}$ with $r>0$ and $\phi
\in [-\pi,\pi)$.

For $\lambda \in \C$ and $j,k \in \{ 1, \cdots, n\}$, $F_{\lambda}^{\pm,j}: N \rightarrow \C$ is defined for
$x \in \overline{N_k}$ by $F_{\lambda}^{\pm,j}(x) :=
F_{\lambda,k}^{\pm,j}(x)$ with
\[ \left\{ \begin{aligned}
       F_{\lambda,j}^{\pm,j}(x) &= \cos(\xi_j( \lambda) x ) \pm i
       s_j( \lambda)   \sin(\xi_j( \lambda) x ), & \\
      F_{\lambda,k}^{\pm,j}(x) &= \exp(\pm i \xi_k(\lambda)x), &
          \text{for } k \neq j.
   \end{aligned} \right.
\]
\end{defi}
%
%
\begin{rem}\label{rem.eig}
\begin{itemize}
\item $F_{\lambda}^{\pm,j}$ satisfies the transmission conditions $(T_0)$ and
      $(T_1)$.
\item Formally it holds $AF_{\lambda}^{\pm,j}= \lambda F_{\lambda}^{\pm,j}$.
\item Clearly $F_{\lambda}^{\pm,j}$ does not belong to $H$, thus it is not a
      classical eigenfunction.
\item For $\Im (\lambda) \neq 0$, the function $F_{\lambda,k}^{\pm,j}$, where
      the $+$-sign (respectively $-$-sign) is chosen if $\Im (\lambda) > 0$
      (respectively $\Im (\lambda) < 0$), belongs to
      $H^2(N_k)$ for $k \neq j$. This feature is used in the formula for the
      resolvent of $A$.
\end{itemize}
\end{rem}
%
%
\begin{defi}[Kernel of the resolvent] \label{def.K}
Let $w:\C \to \C$ be defined by $w(\lambda) = \pm i \cdot \sum_{j=1}^n c_j
\xi_j (\lambda)$. For any $\lambda \in \C$ such that $w(\lambda) \neq 0$, $j \in \{ 1, \cdots, n \}$ and $x \in \overline{N_j}$,
we define
\[ K(x,x',\lambda) = \left\{ \begin{aligned}
        \frac{1}{w(\lambda)} F^{\pm,j}_{\lambda,j}(x)
              F^{\pm,j+1}_{\lambda,j}(x'), & \text{ for } x' \in \overline{N_j},
              \, x'> x, \\
        \frac{1}{w(\lambda)} F^{\pm,j+1}_{\lambda,j}(x)
              F^{\pm,j}_{\lambda}(x'), & \text{ for } x' \in \overline{N_k}, k
              \neq j \text{ or } x'\in \overline{N_j}, \, x'< x.
        \end{aligned} \right.
\]
In the whole formula $+$ (respectively $-$) is chosen, if
$\Im(\lambda) > 0$ (respectively $\Im (\lambda) \leq 0$). Finally,
the index $j$ is to be understood modulo $n$, that is to say, if
$j=n$, then $j+1 = 1$.
\end{defi}
%

%
\begin{figure}

\vspace*{3cm}
\hspace*{-4cm}
\parbox{12cm}{

\unitlength=1mm \special{em:linewidth 0.4pt} \linethickness{0.4pt}
\begin{picture}(85.00,101.33)

\multiput(80.00,80)(-1.6,-0.4){18}{\rule{0.10\unitlength}{0.10\unitlength}}

\multiput(70,60.00)(-1.6,1.2){15}{\rule{0.10\unitlength}{0.10\unitlength}}

\multiput(80.00,80.00)(-1.6,1.2){9}{\rule{0.10\unitlength}{0.10\unitlength}}

\multiput(70.00,60.00)(-0.9,-0.6){28}{\rule{0.10\unitlength}{0.10\unitlength}}

\multiput(90.00,65.00)(-0.9,-0.6){9}{\rule{0.10\unitlength}{0.10\unitlength}}

\multiput(90.00,65.00)(1,0){28}{\rule{0.10\unitlength}{0.10\unitlength}}

\multiput(80.00,80.00)(-0.5,-1){25}{\rule{0.10\unitlength}{0.10\unitlength}}
                                
\put(67.70,55.00){\vector(-1,-2){0.7}}

\multiput(50,102)(0.8,-1.2){11}{\rule{0.10\unitlength}{0.10\unitlength}}

\multiput(80.00,80.00)(0.8,-1.2){18}{\rule{0.10\unitlength}{0.10\unitlength}}

\put(94.00,59.00){\vector(2,-3){3.00}}

\multiput(90.00,65.00)(-1.6,1.2){21}{\rule{0.10\unitlength}{0.10\unitlength}}

\put(125.00,80.00){\line(0,1){20.00}}
\put(50.00,102.00){\line(0,1){21.00}}
\put(39.00,53.00){\line(0,1){20.00}}
\put(80.00,80.00){\vector(0,1){25.00}}

\put(125.00,80.00){\line(2,-3){10.00}}
\put(39.00,52.00){\line(2,-3){10.00}}

\put(125.00,80.00){\line(-1,-2){10.00}}
\put(50.00,103.00){\line(-1,-2){10.00}}
\put(39.00,53.00){\line(-1,-2){10.00}}

\put(80.00,80.00){\vector(1,0){50.00}}
\put(80.00,100.00){\line(1,0){45.00}}
\put(118.00,65.00){\line(1,0){17.00}}
\put(82.00,60.00){\line(1,0){33.00}}
\multiput(70.00,60.00)(1,0){12}{\rule{0.10\unitlength}{0.10\unitlength}}

\put(66.00,90.00){\vector(-4,3){20.00}}
\put(80.00,100.00){\line(-4,3){30.00}}
\put(46.00,78.00){\line(-4,3){6.00}}

\put(80.00,80.00){\vector(-3,-2){46.00}}
\put(80.00,100.00){\line(-3,-2){41.00}}
\put(82.00,59.50){\line(-3,-2){33.50}}

\put(45.00,43.50){\line(-3,-2){16.0}}

\put(80.00,80.00){\line(-1,-4){6.67}}
\put(80.00,80.00){\line(1,1){20.00}}

\footnotesize

\put(135,78.5){$x \in N_1$}
\put(75,110){$x' \in N_1$}
\put(32,109){$x \in N_2$}
\put(21,45){$x \in N_3$}
\put(95,50){$x'\in N_3$}
\put(61,48){$x' \in N_2$}
\tiny
\put(85,90){diag$\, N_1\hspace*{-1mm}\times \hspace*{-1mm}N_1$}
\put(55,75){diag$\, N_2 \hspace*{-1mm}\times \hspace*{-1mm}N_2$}
\put(75,65){diag$\, N_3 \hspace*{-1mm}\times \hspace*{-1mm}N_3$}

\end{picture}

                   }
\vspace*{-3cm}

   \caption{$N \hspace*{-1mm}\times \hspace*{-1mm}N$ in the case $n=3$}
   \label{NxN}
\end{figure}
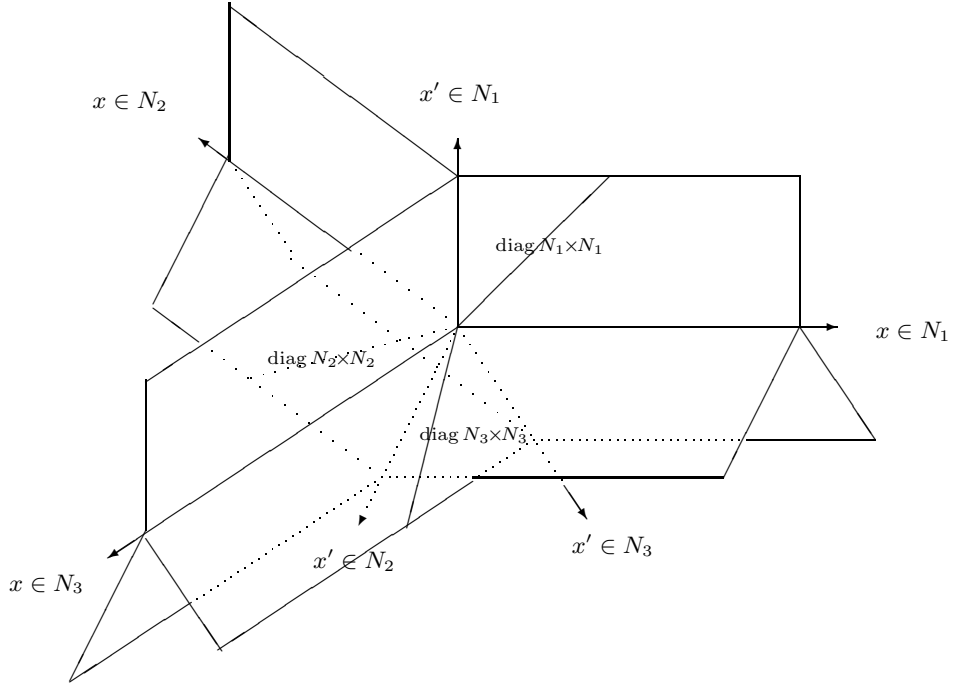
Figure~\ref{NxN} shows the domain of the kernel
$K(\cdot,\cdot,\lambda)$ in the case $n=3$ with its three main
diagonals, where the kernel is not smooth . Note that in particular,
if $c_j = c$ and $a_j = 0$ for all $j \in \{ 1, \dots, n \}$, then
$w(\lambda) = \pm i n c \sqrt{\lambda}$ for all $j \in \{ 1, \dots,
n \}$, which only vanishes for $\lambda=0$. On the other hand, if
there exist $i$ and $j$ in $\{ 1, \ldots, n \}$, such that $a_i \neq
a_j$, then it is clear that $w(\lambda)$ never vanishes on $\R$, but
we need to know, if it vanishes on $\C$.

We will show in Theorem~\ref{theo1} that $K$ is indeed the kernel of the
resolvent of $A$. In order to do so, we collect some useful observations in
the following lemma.

\begin{lem} \label{w.esti}
\begin{enumerate}
\item \label{enum:w.esti:1} For $a_1 \leq \lambda$ and $\eps \geq 0$ holds
    $| w(\lambda - i \eps) |^2 \geq \sum_{j=1}^n c_j |\lambda - a_j|$.
\item \label{enum:w.esti:2} For $\lambda \in \rho(A)$ such that $\Re(\lambda)
    \geq a_1$, the Wronskian $w$ only vanishes at $\lambda = \alpha$, if
    $a_k = \alpha$ for all $k \in \{ 1, \ldots, n \}$.
\end{enumerate}
\end{lem}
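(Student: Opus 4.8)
For part \eqref{enum:w.esti:1}, I would compute $w(\lambda - i\eps)$ directly from its definition $w(\lambda) = \pm i \sum_{j=1}^n c_j \xi_j(\lambda)$, where $\xi_j(\lambda) = \sqrt{(\lambda - a_j)/c_j}$. The idea is to understand the sign of the real and imaginary parts of each $\xi_j(\lambda - i\eps)$. Writing $\lambda - a_j - i\eps = r_j e^{i\phi_j}$ with $r_j = |\lambda - a_j - i\eps|$, the branch convention $\sqrt{re^{i\phi}} = \sqrt{r}\,e^{i\phi/2}$ with $\phi \in [-\pi,\pi)$ forces $\phi_j \in (-\pi, 0]$ whenever $\lambda \geq a_1$ and $\eps \geq 0$ (since the real part $\lambda - a_j$ may have either sign but the imaginary part $-\eps$ is nonpositive). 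Hence every $\xi_j(\lambda-i\eps)$ lies in the closed fourth quadrant, so both $\Re\,\xi_j$ and $-\Im\,\xi_j$ are nonnegative; crucially, there is no cancellation of signs among the branches. Because all the $c_j \xi_j$ add up coherently in this quadrant, the triangle-type inequality
\[
 |w(\lambda - i\eps)|^2 = \Bigl|\sum_{j=1}^n c_j \xi_j\Bigr|^2 \geq \sum_{j=1}^n c_j^2 |\xi_j|^2 = \sum_{j=1}^n c_j |\lambda - a_j - i\eps| \geq \sum_{j=1}^n c_j |\lambda - a_j|
\]
should follow, using $c_j^2 |\xi_j|^2 = c_j^2 \cdot |\lambda - a_j - i\eps|/c_j$ and that same-quadrant vectors satisfy $|\sum v_j|^2 \geq \sum |v_j|^2$. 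The main technical point to verify carefully is that the chosen square-root branch really does place all $\xi_j$ in a single quadrant; I would check the boundary cases $\eps = 0$ and $\lambda = a_j$ separately, since there the phase $\phi_j$ hits the endpoints of the interval.

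For part \eqref{enum:w.esti:2}, the goal is to locate the zeros of $w$ in the half-plane $\Re(\lambda) \geq a_1$. Part \eqref{enum:w.esti:1} already handles the boundary ray $\lambda \in [a_1, +\infty)$ together with the approach $\eps \downarrow 0$ from the lower half-plane, so the remaining issue is genuinely complex $\lambda$. Since we restrict to $\lambda \in \rho(A)$ and $\sigma(A) \subset [a_1,+\infty)$ by Proposition \ref{selfadjoint}, any such $\lambda$ either has $\Im(\lambda) \neq 0$ or is a real point not in the spectrum. I would argue that $w(\lambda) = 0$ forces $\sum_j c_j \xi_j(\lambda) = 0$; squaring or grouping real and imaginary parts and again exploiting the quadrant localization of the branch, I expect to show that a sum of terms all lying (weakly) in one half-plane can vanish only if every term vanishes, i.e.\ $\xi_j(\lambda) = 0$ for all $j$, which means $\lambda = a_j$ for all $j$. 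Thus $w(\lambda) = 0$ with $\Re(\lambda) \geq a_1$ forces $a_1 = \cdots = a_n = \lambda =: \alpha$, as claimed.

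The step I expect to be the main obstacle is the rigorous quadrant/sign analysis of the complex square root across the whole region $\Re(\lambda) \geq a_1$, away from the real axis where part \eqref{enum:w.esti:1} no longer applies verbatim. When $\Im(\lambda) \neq 0$ the phases $\phi_j$ of the different $\lambda - a_j$ need not all lie in the same quadrant, so the clean coherence argument from \eqref{enum:w.esti:1} does not transfer directly; I would need a more careful common-half-plane argument, perhaps separating the cases $\Im(\lambda) > 0$ and $\Im(\lambda) < 0$ and tracking how the sign of $\Im(\xi_j)$ is pinned down by the branch cut. Handling the branch-cut endpoint $\phi = -\pi$ (which can occur for the smallest indices when $\lambda < a_j$) will require the most care, and I would treat it as the delicate boundary case throughout both parts.
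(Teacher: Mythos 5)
Your proposal is correct and follows essentially the same route as the paper: both expand $|w(\lambda-i\eps)|^2$ and reduce the claim to the nonnegativity of the cross terms $\Re\bigl(c_k\xi_k\,\overline{c_l\xi_l}\bigr)$, which the paper establishes by a three-case analysis of $\arg\bigl(\eta_k\overline{\eta_l}\bigr)$ and which your single-quadrant observation (that $\arg(\lambda-i\eps-a_j)\in[-\pi,0]$ places every $\xi_j$ in the closed fourth quadrant) delivers in one stroke. The obstacle you anticipate in part (ii) does not actually materialize --- for fixed $\lambda$ with $\Im(\lambda)\neq 0$ all the numbers $\lambda-a_j$ share the same imaginary part, so all $\xi_j$ lie in one open quadrant with $\Re\,\xi_j>0$ and $w$ cannot vanish --- whereas the paper instead reduces the upper half-plane to the lower one via $w(\overline{\mu})=\overline{w(\mu)}$ and reuses the bound from part (i).
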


\begin{proof}
We first prove \ref{enum:w.esti:1}).

Note that for $z_1,z_2,\ldots,z_n \in \C$ holds
\[ \Bigl| \sum_{j=1}^n z_j \Bigr|^2 = \sum_{j=1}^n |z_j|^2 + 2
    \sum_{{k,l=1}\atop{k\neq l}}^{n} \Re (z_k \overline{z_l}).
\]
With $z_k := c_k \xi_k(\lambda - i \eps)$ and the
abbreviation $\eta_k := \sqrt{\lambda - i \eps -a_k}$ it follows
\[ | w(\lambda - i \varepsilon) |^2 = \sum_{j=1}^n c_j |\eta_j|^2 + 2
    \sum_{{k,l=1}\atop{k\neq l}}^n c_k c_l \Re (\eta_k \overline{\eta_l}).
\]
Thus it suffices to show $\Re (\eta_k \overline{\eta_l}) \ge 0$ for
$k,l = 1, \ldots, n$ with $k \neq l$. With our convention $\sqrt{z}
= \sqrt{|z|} e^{i \frac{\arg(z)}{2}}, \ \arg(z) \in [-\pi,\pi)$,
this means
\[ \arg(\eta_k \overline{\eta_l}) \in \Bigl[ -\frac{\pi}{2}, \frac{\pi}{2}
    \Bigr].
\]
Without loss of generality, let $k<l$. Then we have $a_k \leq a_l$ and there are three possible
positions of $\lambda$:
\begin{itemize}
\item $a_l \leq \lambda$:

\noindent Then for $r = k$ and for $r = l$ we have $\arg(\lambda - i\eps -
a_r) \in [-\frac{\pi}{2},0]$ and therefore
\[ \arg(\eta_r) = \frac{1}{2} \arg(\lambda - i\eps - a_r) \in \Bigl[
    -\frac{\pi}{4}, 0 \Bigr].
\]
Using $\lambda - a_k > \lambda - a_l$, this implies
\[ \arg(\eta_k \overline{\eta_l}) = \arg(\eta_k) - \arg(\eta_l) \in \Bigl[ 0,
    \frac{\pi}{4} \Bigr].
\]
\item $a_k \leq \lambda < a_l$:

\noindent Then $\arg(\lambda - i\eps - a_{k}) \in [-\frac{\pi}{2},0]$ and
therefore
\[ \arg(\eta_k) = \frac{1}{2} \arg(\lambda - i\eps - a_k) \in \Bigl[
    -\frac{\pi}{4}, 0 \Bigr].
\]
Furthermore, $\arg(\lambda - i\eps - a_l) \in [-\pi, -\frac{\pi}{2}]$, and
therefore
\[ \arg(\eta_l) = \frac{1}{2} \arg(\lambda - i\eps - a_l) \in \Bigl[
    -\frac{\pi}{2}, -\frac{\pi}{4} \Bigr].
\]
Putting everything together we get
\[ \arg(\eta_k \overline{\eta_l}) = \arg(\eta_k) - \arg(\eta_l) \in \Bigl[ 0,
    \frac{\pi}{2} \Bigr].
\]
%
\item $\lambda < a_k$:

\noindent In this case we get again for $r=k$ and $r=l$
$\arg(\lambda - i\eps - a_r) \in [-\pi,-\frac{\pi}{2}]$ and therefore
\[ \arg(\eta_r) = \frac{1}{2} \arg(\lambda - i\eps - a_r) \in \Bigl[
    -\frac{\pi}{2}, -\frac{\pi}{4} \Bigr].
\]
This yields
\[ \arg(\eta_k \overline{\eta_l}) = \arg(\eta_k) - \arg(\eta_l) \in \Bigl[
    -\frac{\pi}{4},\frac{\pi}{4} \Bigr].
\]
\end{itemize}

Thus, in all three cases we have $\arg(\eta_k \overline{\eta_l}) \in
[ -\frac{\pi}{4}, \frac{\pi}{2} ]$ and, hence, $\Re (\eta_k \overline{\eta_l})
\geq 0$ for all $k,l = 1, \ldots ,n$ with $k \neq l$.

In order to prove \ref{enum:w.esti:2}), we note that the choice of the branch cut of the complex square root has been made in such a way that
$\sqrt{\overline{\mu}} = \overline{\sqrt{\mu}}$ for all $\mu \in \C$.
Thus $w(\overline{\mu}) = \overline{w(\mu)}$ for all $\mu \in \C$. This implies that the first part of the lemma can be generalized to:
\[ |w(\mu)|^2 \geq \sum_{j=1}^n c_j |\Re(\mu) - a_j|
\]
for every $\mu$ such that $\Re(\mu) \geq a_1$. Then $w(\mu)=0 \Leftrightarrow
|\Re(\mu) - a_j| = 0$ for all $j \in \{ 1, \ldots, n \}$. This implies that
$w$ never vanishes, if there exist $k,l$ in $\{1, \ldots, n \}$, with $k \neq l$ such that $a_k \neq a_l$.

Finally, if $a_k = \alpha$ for all $k \in \{ 1, \ldots, n \}$, then
$|w(\mu)|^2 = \left( \sum_{k=1}^n \sqrt{c_k} \right)^2 |\mu - \alpha|$, which
only vanishes for $\mu = \alpha$.
\end{proof}
%
\begin{theo} \label{theo1}
Let $f \in H$. Then, for $x \in N$ and $\lambda \in \rho(A)$ such that
$\Re(\lambda) \geq a_1$
\[ [R(\lambda, A)f](x)= \int_N K(x, x', \lambda) f(x') \; dx'.
\]
\end{theo}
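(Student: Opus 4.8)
The plan is to show that the integral operator $T_\lambda$ with kernel $K(\cdot,\cdot,\lambda)$ is a left inverse of $\lambda I - A$ on $D(A)$; since $\lambda \in \rho(A)$, the operator $\lambda I - A$ is a bijection, so any left inverse must coincide with $R(\lambda,A)$. The whole point is to reduce this, branch by branch, to Proposition~\ref{expr.resol}.

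First I would record that $K$ is well-defined on the range in question: by the second assertion of Lemma~\ref{w.esti} together with $\lambda \in \rho(A)$ and $\Re(\lambda) \ge a_1$ one has $w(\lambda) \neq 0$, since the only possible zero (namely $\lambda = \alpha$ in the degenerate case where all $a_k = \alpha$) lies in $\sigma(A)$ and hence not in $\rho(A)$. I would also observe that for \emph{real} $\lambda$ with $\Re(\lambda) \ge a_1$ at least $\xi_1(\lambda)$ is real, so the decay underlying Remark~\ref{rem.eig} breaks down; such points belong to the (essential) spectrum and are therefore excluded, so one may assume throughout that $\Im(\lambda) \neq 0$. This is exactly the regime in which the final item of Remark~\ref{rem.eig} provides the decay we need.

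Next, fix $x \in N_j$ and set $e^\lambda_1 := F_\lambda^{\pm,j}$ and $e^\lambda_2 := F_\lambda^{\pm,j+1}$, with the sign chosen according to the sign of $\Im(\lambda)$ as in Definition~\ref{def.K}. By Remark~\ref{rem.eig} we have $e^\lambda_2|_{N_j} = \exp(\pm i \xi_j(\lambda)\,\cdot) \in H^2(N_j)$ and $e^\lambda_1|_{N_m} = \exp(\pm i \xi_m(\lambda)\,\cdot) \in H^2(N_m)$ for every $m \neq j$, so the integrability hypotheses of Proposition~\ref{expr.resol} hold with $k = j$. It then remains to compute the Wronskian on $N_j$. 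Since both functions solve the same equation there, $w^\lambda_{1,2}$ is constant on $N_j$, and evaluating at $x = 0$ using $F_{\lambda,j}^{\pm,j}(0) = 1$, $(F_{\lambda,j}^{\pm,j})'(0) = \pm i s_j \xi_j$, $F_{\lambda,j}^{\pm,j+1}(0) = 1$, $(F_{\lambda,j}^{\pm,j+1})'(0) = \pm i \xi_j$ gives $w^\lambda_{1,2} = \pm i \xi_j (1 - s_j)$. Inserting $1 - s_j = \frac{\sum_l c_l \xi_l}{c_j \xi_j}$ from Definition~\ref{gen.eig}, this equals $w(\lambda)/c_j$, so that $c_j w^\lambda_{1,2}(x) = w(\lambda)$ and formula~\eqref{E} reduces verbatim to $\int_N K(x,x',\lambda) f(x')\,dx'$.

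The main obstacle is that Proposition~\ref{expr.resol} is stated under the hypothesis that the Wronskian be nonzero at \emph{every} point of $N$, whereas here $e^\lambda_1$ and $e^\lambda_2$ coincide on every branch $N_m$ with $m \neq j, j+1$ (both equal $\exp(\pm i \xi_m(\lambda)\,\cdot)$), so $w^\lambda_{1,2}$ vanishes identically there. The resolution, and the point I would emphasise, is that the computation in the proof of Proposition~\ref{expr.resol} only ever uses the value of the Wronskian at the evaluation point $x \in N_j$: the two integrations by parts, controlled by Lemma~\ref{greens.formula} and by the decay at infinity on precisely the branches where $e^\lambda_2$ resp.\ $e^\lambda_1$ lie in $H^2$, produce boundary contributions solely at $x$, those at $0$ cancelling through $(T_0)$ and $(T_1)$ and those at infinity vanishing. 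Hence the pair $(F_\lambda^{\pm,j}, F_\lambda^{\pm,j+1})$ may legitimately be used for $x \in N_j$ even though its Wronskian degenerates on the remaining branches, and running the argument separately on each branch $N_j$ yields the stated formula for all $x \in N$.
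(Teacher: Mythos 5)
Your proof is correct and follows essentially the same route as the paper: apply Proposition~\ref{expr.resol} with $e_1^\lambda = F_\lambda^{\pm,j}$, $e_2^\lambda = F_\lambda^{\pm,j+1}$ on each branch $N_j$, verify the $H^2$-hypotheses via Remark~\ref{rem.eig}, and compute $c_j w_{1,2}^\lambda = \pm i\sum_l c_l\xi_l = w(\lambda)$, with Lemma~\ref{w.esti} guaranteeing $w(\lambda)\neq 0$ on the relevant set. You are in fact more careful than the paper on the point that the Wronskian of this pair vanishes identically on the branches $N_m$, $m\neq j, j+1$, so that the stated hypothesis of Proposition~\ref{expr.resol} is formally violated; your observation that its proof only uses the Wronskian at the evaluation point is the right repair.
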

%
\begin{proof}
In \eqref{E}, the generalized eigenfunction $e_1^\lambda$ can be chosen to be $F^{\pm,j}_{\lambda}$. Then $e_2^\lambda$ can be
$F^{\pm,l}_{\lambda}$ with any $l \neq j$ so we have chosen $j+1$ to fix the formula. The choice has been done so that the integrands lie
in $L^1(0, + \infty)$ (cf. the last item in Remark~\ref{rem.eig}). Simple calculations yield the expression for the Wronskian
$w(\lambda)$.

For $\lambda \in \rho(A)$ such that $\Re(\lambda) \geq a_1$, the Wronskian only vanishes at $\lambda = \alpha$ if $a_k = \alpha$ for all $k \in \{ 1, \ldots, n \}$
due to Lemma~\ref{w.esti}. But in this case, the Wronskian is $w(\lambda) = \left( \sum_{k=1}^n \sqrt{c_k} \right) \sqrt{\lambda - \alpha}$ and
$w(\lambda)^{-1}$ has an $L^1$-singularity at $\lambda = \alpha$.
\end{proof}
%
%
%
%
%
\section{Application of Stone's formula and limiting absorption
principle} \label{app.sto}
%
%
%
%
\noindent
Let us first recall Stone's formula (see Theorem~XII.2.11 in \cite{Dun}).
%
\begin{theo} \label{stone}
Let $E$ be the resolution of the identity of a linear unbounded
self-adjoint operator $T: D(T) \rightarrow H$ in a Hilbert space $H$
(i.e. $E(a,b) = {{\mathbf{1}}}_{(a,b)}(T)$ for $(a,b) \in \R^2$,
$a<b$). Then, in the strong operator topology
\[ h(T) E(a,b) = \lim_{\delta \rightarrow 0^+} \lim_{\eps \rightarrow
   0^+} \frac{1}{2 \pi i} \int_{a + \delta}^{b - \delta} h(\lambda)
   [R(\lambda - \eps i, T) - R(\lambda + \eps i, T)] \; d \lambda
\]
for all $(a,b) \in \R^2$ with $a<b$ and for any continuous scalar function $h$
defined on the real line.
\end{theo}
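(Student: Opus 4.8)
The statement is the classical Stone formula, so the plan is to reduce it to the scalar spectral measures via the spectral theorem for the self-adjoint operator $T$ and a direct computation of the resolvent difference. Writing $R(z,T)=\int_\R (z-\mu)^{-1}\,dE(\mu)$ for $\Im z\neq0$, the first step is to compute, for fixed $\eps>0$,
\[
\frac{1}{2\pi i}\bigl[R(\lambda-\eps i,T)-R(\lambda+\eps i,T)\bigr]
=\int_\R \frac{1}{\pi}\,\frac{\eps}{(\lambda-\mu)^2+\eps^2}\,dE(\mu),
\]
recognising the kernel as the Poisson kernel $P_\eps(\lambda-\mu)$. Multiplying by $h(\lambda)$, integrating over $[a+\delta,b-\delta]$ and exchanging the $\lambda$-integral with the spectral integral (a Fubini / operator-valued integration step, legitimate because $h$ is bounded on the compact interval and $P_\eps$ is integrable) would identify the whole approximating operator as $\Phi_{\eps,\delta}(T)$, where
\[
\Phi_{\eps,\delta}(\mu):=\int_{a+\delta}^{b-\delta} h(\lambda)\,P_\eps(\lambda-\mu)\,d\lambda
=\bigl(P_\eps*(h\,\mathbf{1}_{[a+\delta,b-\delta]})\bigr)(\mu)
\]
is the Poisson integral of the truncation $h\,\mathbf{1}_{[a+\delta,b-\delta]}$. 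Since $P_\eps$ has unit mass, $\Phi_{\eps,\delta}$ is bounded by $\sup_{[a+\delta,b-\delta]}|h|$, uniformly in $\eps$.

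The whole argument then rests on one soft principle from the functional calculus: if uniformly bounded Borel functions $g_m$ converge pointwise to $g$, then $g_m(T)\to g(T)$ in the strong operator topology, because $\|(g_m(T)-g(T))u\|^2=\int_\R|g_m-g|^2\,d(E(\mu)u,u)_H\to0$ by dominated convergence against the finite measure $(E(\cdot)u,u)_H$. I would apply this twice. First, letting $\eps\to0^+$: by the boundary behaviour of the Poisson integral of a bounded function, $\Phi_{\eps,\delta}$ converges pointwise to $h$ on $(a+\delta,b-\delta)$, to $0$ outside $[a+\delta,b-\delta]$, and to the half-value $\tfrac12 h$ at the two jump points $a+\delta,b-\delta$; this limit function $\Phi_{0,\delta}$ is again uniformly bounded, so the principle gives the strong limit $\Phi_{0,\delta}(T)$.

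Second, letting $\delta\to0^+$: for each fixed $\mu$ the value $\Phi_{0,\delta}(\mu)$ equals $h(\mu)$ once $\mu\in(a+\delta,b-\delta)$, equals $0$ once $\mu\notin[a+\delta,b-\delta]$, and the half-value correction at $\mu=a+\delta$ or $\mu=b-\delta$ occurs for at most one $\delta$ and hence is irrelevant in the limit; thus $\Phi_{0,\delta}\to h\,\mathbf{1}_{(a,b)}$ pointwise, again boundedly. The same principle yields $\Phi_{0,\delta}(T)\to (h\,\mathbf{1}_{(a,b)})(T)=h(T)E(a,b)$ strongly, which is the assertion. It is precisely this ordering of the limits --- $\eps$ first, then $\delta$ --- together with the strict truncation to $[a+\delta,b-\delta]$, that keeps the endpoints $a,b$ away from the evaluation and makes the open-interval projection $E(a,b)$ appear cleanly.

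I expect the genuinely technical points to be the two indicated above: justifying the interchange of the $\lambda$-integral with the spectral integral, so that the approximant is literally $\Phi_{\eps,\delta}(T)$, and invoking the correct pointwise boundary convergence of the Poisson integral (the Fatou-type half-value at the jumps of $\mathbf{1}_{[a+\delta,b-\delta]}$). Everything else is the explicit computation of the resolvent difference and two applications of dominated convergence.
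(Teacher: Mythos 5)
Your argument is correct and complete. Note, however, that the paper does not prove this statement at all: it is quoted as the classical Stone formula with a reference to Theorem~XII.2.11 of Dunford--Schwartz, so there is no in-paper proof to compare against. Your derivation is the standard one: the resolvent difference reduces under the spectral theorem to the Poisson kernel $P_\eps(\lambda-\mu)=\frac{1}{\pi}\frac{\eps}{(\lambda-\mu)^2+\eps^2}$, Fubini identifies the approximant with $\Phi_{\eps,\delta}(T)$ for the Poisson integral $\Phi_{\eps,\delta}=P_\eps*(h\,\mathbf{1}_{[a+\delta,b-\delta]})$, and two applications of bounded pointwise convergence in the functional calculus (dominated convergence against the finite measures $(E(\cdot)u,u)_H$) give the two iterated strong limits. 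The two technical points you flag are exactly the right ones, and both go through: $h$ need only be bounded on the compact interval $[a+\delta,b-\delta]$, the Fatou half-value at the jump points is harmless since it disappears in the subsequent $\delta$-limit, and the endpoints $a,b$ themselves receive the value $0$ for every $\delta>0$, which matches $h\,\mathbf{1}_{(a,b)}$ and produces the open-interval projection $E(a,b)$. In short: a valid self-contained proof of a result the paper merely cites.
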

%
To apply this formula we need to study the behaviour of the resolvent
$R(\lambda, A)$ for $\lambda$ approaching the spectrum of $A$.
Lemma~\ref{w.esti} will be useful as well as the following results.
%
\begin{lem} \label{sj}
Let $\delta > 0$ be fixed. For all $a_1 \le \lambda$, $0 < \eps < \delta$ and
$j = 1,\ldots,n$ holds
\[ |s_j(\lambda - i \eps)| \le M(\lambda, \delta) := \max_{j=1,\ldots,n}
    \Bigl\{ \frac{1}{\sqrt{|\lambda - a_j|}}  \Bigr\} \sum_{k=1}^n \bigl(
    (\lambda - a_k)^2 + \delta^2\bigr)^{1/4}.
\]
\end{lem}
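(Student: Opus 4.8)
The plan is to reduce the statement to a direct modulus estimate, since $s_j$ is an explicit quotient of sums of square roots. Writing $\mu = \lambda - i\eps$ and using that $c_k > 0$ (so that division by $c_k$ leaves the argument of $\mu - a_k$ unchanged), the branch convention of Definition~\ref{gen.eig} gives $\xi_k(\mu) = \sqrt{\mu - a_k}/\sqrt{c_k}$, whence
\[ s_j(\mu) = -\frac{\sum_{l \neq j} \sqrt{c_l}\,\sqrt{\mu - a_l}}{\sqrt{c_j}\,\sqrt{\mu - a_j}}. \]
The first observation I would record is that $|\sqrt{z}| = |z|^{1/2}$ for any choice of branch, so that $|\sqrt{\mu - a_k}| = |(\lambda - a_k) - i\eps|^{1/2} = ((\lambda - a_k)^2 + \eps^2)^{1/4}$. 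This already isolates the two quantities appearing in the statement: the expressions $(\lambda - a_k)^2 + \eps^2$ (to be compared with $(\lambda - a_k)^2 + \delta^2$) and, in the denominator, $|\lambda - a_j|$. Note that $\eps > 0$ keeps the denominator strictly positive, so $s_j(\mu)$ is well defined.

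Next I would estimate numerator and denominator separately. For the numerator the triangle inequality gives $|\sum_{l \neq j}\sqrt{c_l}\sqrt{\mu - a_l}| \le \sum_{l \neq j}\sqrt{c_l}\,((\lambda - a_l)^2 + \eps^2)^{1/4}$, and since $0 < \eps < \delta$ and $t \mapsto (t + \eps^2)^{1/4}$ is increasing, each summand is bounded by $\sqrt{c_l}\,((\lambda - a_l)^2 + \delta^2)^{1/4}$. For the denominator I instead need a lower bound, so I would simply discard $\eps$: from $\eps > 0$ one has
\[ ((\lambda - a_j)^2 + \eps^2)^{1/4} \ge |\lambda - a_j|^{1/2}, \]
which yields $1/((\lambda - a_j)^2 + \eps^2)^{1/4} \le 1/\sqrt{|\lambda - a_j|} \le \max_{1 \le k \le n} 1/\sqrt{|\lambda - a_k|}$, the bound being vacuous if $\lambda$ coincides with some $a_k$. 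Both estimates are uniform in $\eps \in (0,\delta)$ by construction.

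Combining them gives
\[ |s_j(\mu)| \le \Bigl( \max_{1 \le k \le n} \frac{1}{\sqrt{|\lambda - a_k|}} \Bigr) \frac{1}{\sqrt{c_j}} \sum_{l \neq j} \sqrt{c_l}\,((\lambda - a_l)^2 + \delta^2)^{1/4}, \]
and it remains only to pass to the full sum over $k = 1, \ldots, n$ (adding the nonnegative $k=j$ term) and to control the weight factor $\sqrt{c_l}/\sqrt{c_j}$ so as to reproduce the precise form of $M(\lambda,\delta)$. I expect this last bookkeeping of the coefficients $c_k$ to be the only delicate point: everything else is the triangle inequality together with the monotonicity in $\eps$, whereas matching the coefficient-free expression $M(\lambda,\delta)$ exactly is where one must be careful about the normalization of the $c_k$ (the slack from the extra $k=j$ term covering the weights precisely in the balanced case). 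This is the step I would treat most carefully when writing out the details.
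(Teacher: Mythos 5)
Your computation is exactly the argument the paper intends: the paper's entire proof is the sentence ``This follows directly from the definition of $s_j$'', and the direct modulus estimate you carry out (factor the $c_l$ out of the square roots using $c_l>0$, triangle inequality in the numerator, monotonicity in $\eps$ to replace $\eps$ by $\delta$, discarding $\eps$ in the denominator) is the only reasonable way to make that precise. Everything up to your displayed bound
\[ |s_j(\mu)| \le \frac{1}{\sqrt{c_j}\,\sqrt{|\lambda - a_j|}} \sum_{l \neq j} \sqrt{c_l}\,\bigl((\lambda - a_l)^2 + \delta^2\bigr)^{1/4}
\]
is correct.

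The ``delicate point'' you flag at the end is, however, not mere bookkeeping: the weight factors $\sqrt{c_l}/\sqrt{c_j}$ genuinely cannot be absorbed, and the lemma as printed is false for general $c_k$. Take $n=2$, $a_1=a_2=0$, $c_1=100$, $c_2=1$, $\lambda=1$ and $\delta$ small: then $s_2(\mu) = -\sqrt{c_1}/\sqrt{c_2} = -10$, while $M(1,\delta)=2(1+\delta^2)^{1/4}\approx 2$. So your suspicion is right, and the correct majorant --- the one your computation actually produces --- must carry the coefficients, e.g.
\[ M(\lambda,\delta) := \max_{j=1,\ldots,n} \frac{1}{\sqrt{c_j\,|\lambda - a_j|}} \sum_{k=1}^n \sqrt{c_k}\,\bigl((\lambda - a_k)^2 + \delta^2\bigr)^{1/4}.
\]
This is corroborated by the remark immediately following the lemma, where in the special case $a_1=\cdots=a_n$ the authors do write $M$ with the factors $\sqrt{c_k}/\sqrt{c_j}$; the printed general formula has simply dropped them. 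Nothing downstream breaks, since the only property of $M(\cdot,\delta)$ used later is that it lies in $L^1_{\mathrm{loc}}([a_1,+\infty))$, which the weighted version satisfies just as well. So: state and prove the weighted bound rather than trying to force the coefficient-free one --- your argument then closes with no further work, since adding the nonnegative $k=j$ term and passing to the maximum over $j$ are both harmless.
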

\begin{proof}
This follows directly from the definition of $s_j$.
\end{proof}
Note that $M(\cdot,\delta) \in
L^1_{\mathrm{loc}}([a_1,+\infty))$.
Furthermore, if $a_1 = \ldots = a_n$, then
\[ s_j(\mu) = \frac{1}{\sqrt{c_j}} \sum_{{k=1}\atop{k\neq j}}^n
    \sqrt{c_k}
\]
for all $\mu \in \C$, which means that $s_j$ is constant and we may take
\[ M(\lambda,\delta) := \max_{j=1,\ldots,n}   \Bigl\{ \frac{1}{\sqrt{c_j}}
    \sum_{{k=1}\atop{k\neq j}}^n \sqrt{c_k} \Bigr\}.
\]

\begin{theo}[Limiting absorption principle for $A$] \label{lim.abs}
Let $\delta > 0$ be fixed and let $M(\lambda,\delta)$ be defined as in
Lemma~\ref{sj}. Then for all $a_1 \leq \lambda$, $0 < \eps < \delta$ and
$(x,x') \in N^2$ we have
\begin{enumerate}
\item $\lim_{\alpha \rightarrow 0} K(x,x',\lambda - i \alpha) =
      K(x,x',\lambda)$,
\item $|K(x,x',\lambda - i \eps)| \leq N(\lambda,\delta) e^{\gamma  (x + x')}$,
    where $N(\lambda,\delta) :=
    \frac{1 + M(\lambda,\delta)}{(\sum_{j=1}^n c_j |\lambda - a_j|)^{1/2}}$
    and \\
$\gamma := \max_{j=1,\ldots,n} \{ c_j^{-\frac{1}{2}} \}
      \max \{
      ((a_n-a_1)^2  + \delta^2)^{\frac{1}{4}},
      1,
       \delta
      \}.$
\end{enumerate}
\end{theo}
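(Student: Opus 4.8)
The plan is to prove the two assertions separately, using in both the explicit form of $K$ in Definition~\ref{def.K}, the lower bound $|w(\lambda-i\eps)|^2 \ge \sum_{j=1}^n c_j|\lambda-a_j|$ from Lemma~\ref{w.esti}, and the bound $|s_j|\le M(\lambda,\delta)$ from Lemma~\ref{sj}. Since $\Im(\lambda-i\alpha)<0$ and $\Im(\lambda-i\eps)<0$, the minus sign is selected everywhere, so on the branch $N_j$ one has the trigonometric factor $F^{-,j}_{\lambda,j}(x)=\cos(\xi_j x)-is_j\sin(\xi_j x)$, while on every branch $N_k$ with $k\neq m$ the factor $F^{-,m}_{\lambda,k}(x)=\exp(-i\xi_k x)$ is a pure exponential.

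For assertion (i), the essential point is that the branch cut convention $\arg\in[-\pi,\pi)$ is exactly the one compatible with approaching the spectrum from below. For each $k$ the number $(\lambda-a_k)-i\alpha$ lies in the closed lower half-plane, and as $\alpha\to0^+$ its argument tends to $0$ (if $\lambda>a_k$) or to $-\pi$ from above (if $\lambda<a_k$); in both situations the chosen square root is continuous, since the value $\arg=-\pi$ belongs to the interval. Hence $\xi_k(\lambda-i\alpha)\to\xi_k(\lambda)$ for every $k$, and therefore $s_j$, $w$ and all the $F^{-,j}_{\lambda,k}$ converge to their boundary values. As $w(\lambda)\neq0$ for such $\lambda$ by Lemma~\ref{w.esti} (the only possible zero being the point where all the $a_k$ coincide), the quotient defining $K$ passes to the limit and (i) follows.

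For assertion (ii) I would estimate the three ingredients of $K$ separately. The prefactor satisfies $1/|w(\lambda-i\eps)|\le(\sum_{j=1}^n c_j|\lambda-a_j|)^{-1/2}$ by Lemma~\ref{w.esti}. Every exponential factor is harmless: as $\Im\xi_k(\lambda-i\eps)\le0$, one has $|\exp(-i\xi_k x)|=\exp(\Im\xi_k\cdot x)\le1$ for $x\ge0$, so such factors contribute no growth at all. The only factor that can grow is the trigonometric one on $N_j$; using $|\cos z|,|\sin z|\le\cosh(|\Im z|)$ together with $|s_j|\le M(\lambda,\delta)$, it is bounded by $(1+M(\lambda,\delta))\cosh(|\Im\xi_j|\,x)\le(1+M(\lambda,\delta))e^{|\Im\xi_j|\,x}$. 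Inspecting the two cases of Definition~\ref{def.K} shows that at most one trigonometric factor ever appears, the companion factor being an exponential and hence $\le1$; this is why only a single power of $1+M$ occurs in $N(\lambda,\delta)$.

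It remains to bound the growth rate $|\Im\xi_j|$ by $\gamma$ uniformly in $\lambda\ge a_1$, $0<\eps<\delta$ and $j$, which is the crux of the matter. I would distinguish two positions of $\lambda$. If $\lambda\ge a_j$, an explicit computation of $\Im\sqrt{(\lambda-a_j)-i\eps}$ gives $|\Im\xi_j|\le c_j^{-1/2}\sqrt{\eps/2}\le\max_m\{c_m^{-1/2}\}\max\{1,\delta\}$. If instead $\lambda<a_j$, the constraint $a_1\le\lambda$ forces $0<a_j-\lambda\le a_n-a_1$, so the tunnelling rate stays controlled: $|\Im\xi_j|\le|\xi_j|=c_j^{-1/2}((\lambda-a_j)^2+\eps^2)^{1/4}\le\max_m\{c_m^{-1/2}\}((a_n-a_1)^2+\delta^2)^{1/4}$. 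Both bounds are $\le\gamma$, and assembling the three estimates yields $|K(x,x',\lambda-i\eps)|\le N(\lambda,\delta)e^{\gamma(x+x')}$. The main obstacle is exactly this uniformity: one must realise that the propagating (exponential) factors never grow, and that the exponentially growing (tunnelling) factor lives only on the branches with $a_j>\lambda$, where the bounded gap $a_j-\lambda\le a_n-a_1$ keeps its rate independent of $\lambda$.
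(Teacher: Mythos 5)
Your proof is correct and follows essentially the same route as the paper: the explicit form of $K$ from Definition~\ref{def.K}, the lower bound on $|w|$ from Lemma~\ref{w.esti}, the bound on $|s_j|$ from Lemma~\ref{sj}, and a case distinction on the position of $\lambda$ relative to the $a_j$ to dominate the exponential rates by $\gamma$. Your version is in fact marginally sharper in two harmless ways: you note that the pure exponential factors satisfy $|e^{-i\xi_k x}|=e^{x\Im(\xi_k)}\le 1$ because $\arg(\xi_k)\in[-\pi/2,0]$ under the chosen branch cut (the paper simply bounds them by $e^{\gamma x}$ as well), and you split cases per branch ($\lambda\ge a_j$ versus $\lambda< a_j$) instead of globally ($\lambda> a_n$ versus $a_1\le\lambda\le a_n$), arriving at the same constant $\gamma$.
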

%
\begin{proof}
\begin{enumerate}
\item The complex square root is, by definition, continuous on $\{ z \in \C
      : \Im (z) \leq 0 \}$ (cf. Definition~\ref{gen.eig}), hence the
      continuity of $K(x, x', \cdot)$ from below on the real axis. Note that
      $x$ and $x'$ are fixed parameters in this context.
\item For $\Im (\mu) \leq 0, \ \mu = \lambda - i \eps$ and $x \in
      \overline{N_j}$ we have in concrete terms
     \[ K(x,x',\mu) = \frac{1}{w(\mu)}
         \left\{ \begin{aligned}
           &\bigl[\cos \bigl( \xi_j( \mu) x \bigr) - i
             s_j( \mu) \sin \bigl( \xi_j( \mu) x \bigr) \bigr]
             \exp(- i \xi_j(\mu)x'),
           && x' \in  \overline{N_j}, \ x'> x,\\
           & \exp \bigl( - i \xi_j(\mu)x \bigr)
             \bigl[ \cos \bigl( \xi_j( \mu) x' \bigr) - i
             s_j( \mu) \sin \bigl( \xi_j( \mu) x' \bigr) \bigr],
             && x' \in \overline{N_j}, \ x'< x,
             \\
           &\exp \bigl( - i \xi_j(\mu)x \bigr) \exp \bigl(- i \xi_k(\mu)x'
        \bigr),
           && x' \in \overline{N_k}, \ k \neq j.
         \end{aligned} \right.
      \]
    Now, let us first look at the case $\lambda > a_n$. Then
    \[ |\exp(-i \xi_j(\mu) x)| \le \exp \bigl( |\Im(\xi_j(\mu) x)| \bigr)
        = \exp \bigl( c_j^{-1/2} |\Im(\sqrt{\lambda - i \eps - a_j})|
        \bigr).
    \]
    Using the fact, that for $z \in \C$ with $|\arg(z)| \le \pi/2$ we have
    $|\Im(\sqrt{z})| \le \max \{1, |\Im(z)|\}$, we obtain
    \[ |\exp(-i \xi_j(\mu) x)| \le
        \exp \bigl( c_j^{-1/2} \max\{1,\delta\} x \bigr).
    \]
    In the case $a_1 \le \lambda \le a_n$ we find
    \[  |\xi_j(\mu)| = \sqrt{\frac{|\mu - a_j|}{c_j}} = c_j^{-1/2} \bigl(
        (\lambda - a_j)^2 + \eps^2 \bigr)^{1/4} \le c_j^{-1/2} \bigl(
        (a_n - a_j)^2 + \delta^2 \bigr)^{1/4}.
    \]
    Using these estimates and $|e^z| = e^{\Re(z)} \le e^{|z|}$, we find
    for all $\lambda \ge a_1$
        \begin{align*}
      |K(x,x',\mu)| &\le \frac{1}{|w(\mu)|}
            \left\{ \begin{aligned}
            & \bigl(1 + |s_j(\mu)| \bigr) \exp(|\xi_j(\mu)| x)
            \exp(|\xi_j(\mu)| x'), && x' \in  \overline{N_j}, \\
            &\exp(|\xi_j(\mu)| x) \exp(|\xi_k(\mu)| x'), && x' \in
            \overline{N_k}, k \neq j
            \end{aligned} \right. \\
      &\leq \frac{1}{|w(\mu)|} \bigl(1 + |s_j(\mu)| \bigr)
        \exp(\gamma (x+x')).
    \end{align*}
    The conclusion now follows using Lemma~\ref{w.esti} and Lemma~\ref{sj}.
    \qedhere
\end{enumerate}
\end{proof}
\noindent
Note that these estimates in particular imply that $N(\cdot,\delta) \in
L^1_{\mathrm{loc}}([a_1,+ \infty))$. In fact, if $a_1=\ldots=a_n$, then $M(\lambda,\delta)$ can be chosen to be constant, see Lemma~\ref{sj}, and the
denominator of $N$ causes only an $L^1_{\mathrm{loc}}$-type singularity. On the
other hand, if there are two different $a_j$, the denominator of $N$ is never
zero and $M(\cdot,\delta) \in L^1_{\mathrm{loc}}([a_1,+ \infty))$ again by
Lemma~\ref{sj}.
%
%
\begin{lem} \label{conjug}
For $(x,x') \in N^2$ and $\lambda \in \C$, it holds $K(x,x',
\overline{\lambda}) = \overline{K(x,x', \lambda)}$.
\end{lem}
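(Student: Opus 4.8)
The plan is to track how each ingredient of $K$ transforms under $\lambda \mapsto \overline{\lambda}$, exploiting the compatibility of the chosen square-root branch with complex conjugation, together with the sign convention built into Definition~\ref{def.K}. The decisive point is that, since $\Im(\overline{\lambda}) = -\Im(\lambda)$, passing from $\lambda$ to $\overline{\lambda}$ swaps the $\pm$-selection both in $w$ and in the generalized eigenfunctions; this swap is exactly what turns the factor $\ii$ into $-\ii$ and thereby produces complex conjugation rather than a mere substitution of arguments. First I would record the basic identity for the root: with the convention $\sqrt{r e^{\ii \phi}} = \sqrt{r}\,e^{\ii \phi/2}$, $\phi \in [-\pi,\pi)$, of Definition~\ref{gen.eig}, one has $\sqrt{\overline{\mu}} = \overline{\sqrt{\mu}}$ for every $\mu \in \C \setminus (-\infty,0]$, which is the observation already exploited in the proof of Lemma~\ref{w.esti}.

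Assume first $\Im(\lambda) \neq 0$. Then for each $k$ the argument $(\lambda - a_k)/c_k$ has nonzero imaginary part (recall $c_k > 0$) and hence lies off the cut, so $\xi_k(\overline{\lambda}) = \sqrt{\overline{(\lambda-a_k)/c_k}} = \overline{\xi_k(\lambda)}$. Since the $c_l$ are real, the definition of $s_k$ then gives $s_k(\overline{\lambda}) = \overline{s_k(\lambda)}$. For $w$ the sign convention enters decisively: if, say, $\Im(\lambda) > 0$, then $\Im(\overline{\lambda}) < 0$, so $w(\overline{\lambda}) = -\ii \sum_j c_j \xi_j(\overline{\lambda}) = -\ii \sum_j c_j \overline{\xi_j(\lambda)} = \overline{\ii \sum_j c_j \xi_j(\lambda)} = \overline{w(\lambda)}$, and symmetrically when $\Im(\lambda) < 0$.

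Next I would establish the branchwise conjugation of the eigenfunctions, again with the accompanying sign flip. Using that $\cos$, $\sin$ and $\exp$ have real Taylor coefficients, so that $\cos(\overline{z}) = \overline{\cos z}$, $\sin(\overline{z}) = \overline{\sin z}$ and $\exp(\overline{z}) = \overline{\exp z}$, and that $x$ is real, one finds on the branch $N_j$
\[ F^{-,j}_{\overline{\lambda},j}(x) = \cos(\overline{\xi_j(\lambda)}\,x) - \ii\, \overline{s_j(\lambda)}\sin(\overline{\xi_j(\lambda)}\,x) = \overline{\cos(\xi_j(\lambda)x) + \ii\, s_j(\lambda)\sin(\xi_j(\lambda)x)} = \overline{F^{+,j}_{\lambda,j}(x)}, \]
and likewise $F^{-,j}_{\overline{\lambda},k}(x) = \exp(-\ii\,\overline{\xi_k(\lambda)}\,x) = \overline{F^{+,j}_{\lambda,k}(x)}$ for $k \neq j$; the analogous identities hold for the $(j{+}1)$-family and, after exchanging the roles of $+$ and $-$, in the case $\Im(\lambda) < 0$. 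Since the case distinctions in Definition~\ref{def.K} (which branch contains $x'$, and the ordering of $x$ and $x'$) depend only on the geometry and not on $\lambda$, I would then substitute these relations, together with $1/w(\overline{\lambda}) = \overline{1/w(\lambda)}$, into each of the two lines defining $K$ and conclude $K(x,x',\overline{\lambda}) = \overline{K(x,x',\lambda)}$ factor by factor.

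The one delicate point, which I expect to be the main obstacle, is precisely the branch cut of the square root: the clean identity $\xi_k(\overline{\lambda}) = \overline{\xi_k(\lambda)}$ is available only off $(-\infty,0]$, that is, for $\Im(\lambda) \neq 0$ (or for real $\lambda > a_k$), and it is \emph{not} available on the negative real axis, where the two boundary values of the root differ by a sign. Thus the genuine content of the lemma lies in the case $\Im(\lambda) \neq 0$ treated above; the boundary values on $[a_1,+\infty)$ are then to be recovered through the continuity statement of Theorem~\ref{lim.abs}, which permits passing to the limit from below in a controlled manner. This boundary bookkeeping is the only step requiring real care, the remainder being a routine propagation of conjugations through the explicit formulae.
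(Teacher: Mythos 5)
Your proof is correct and follows essentially the same route as the paper's: propagate $\sqrt{\overline{\mu}} = \overline{\sqrt{\mu}}$ through $\xi_k$, $s_k$, $w$ and the generalized eigenfunctions, and observe that conjugating $\lambda$ flips the $\pm$-selection in Definition~\ref{def.K}, which converts the substitution into a genuine conjugation. Your extra caution about the branch cut on $(-\infty,0]$ (hence about real $\lambda$ with $\lambda < a_k$) is warranted --- the paper's own proof asserts $\sqrt{\overline{\lambda}} = \overline{\sqrt{\lambda}}$ for all $\lambda \in \C$ without this caveat --- and since the lemma is only invoked at non-real $\lambda$ (in Proposition~\ref{res.id}), your restriction of the core argument to $\Im(\lambda) \neq 0$ loses nothing.
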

%
\begin{proof}
The choice of the branch cut of the complex square root has been made such that
$\sqrt{\overline{\lambda}} = \overline{\sqrt{\lambda}}$ for all $\lambda \in \C$. This implies $\overline{e^{i \sqrt{\lambda}x}} = e^{\overline{i \sqrt{\lambda}x}} = e^{-i \sqrt{\overline{\lambda}}x}$
for all $\lambda \in \C$ and $x \in \R$. Thus it holds
\[ \overline{F_{\lambda}^{+,j}(x)} = F_{\overline{\lambda}}^{-,j}(x)
   \quad \text{and} \quad \overline{F_{\lambda}^{-,j}(x)} =
   F_{\overline{\lambda}}^{+,j}(x)
\]
for all $\lambda \in \C$, $x \in N$ and $j \in \{ 1, \dots, n \}$. In the same
way we have $\overline{w(\lambda)} = - w(\overline{\lambda})$. Observe,
that switching from $\lambda$ to $\overline{\lambda}$ the sign of the imaginary
part is changing, so in the definition of $K(x,x',\lambda)$ we have to take the
other sign, whenever there is a $\pm$-sign in the formula. This gives the assertion.
\end{proof}
Now, we can deduce a first formula for the resolution of the identity of
$A$.
%
\begin{prop} \label{res.id}
Take $f \in H = \prod_{j=1}^n L^2(N_j)$, vanishing almost everywhere outside a
compact set $B \subset N$ and let $- \infty < a < b < + \infty$. Then for any continuous
scalar function $h$ defined on the real line and for all $x \in N$
\begin{align*}
& \bigl( h(A) E(a,b)f \bigr)(x) \\
\strut = & \int_{(a,b) \cap [a_1, + \infty)} h(\lambda)
          \sum\limits_{j=1}^{n}
          {\mathbf{1}}_{\overline{N_j}}(x)
          \Bigl\{
          \int\limits_{N} f(x')
          \Bigl[
          {\mathbf{1}}_{(x, +\infty)_{N_j}}(x') \cdot
          \Im \Big(\frac{1}{ w(\lambda)}
          F_{\lambda}^{-,j}(x) F_{\lambda}^{-,j+1}(x')\Big) \\
     & \strut \qquad + {\mathbf{1}}_{N \setminus ( x, +\infty )_{N_j}} (x')
    \cdot \Im \Big(\frac{1}{ w(\lambda)}
          F_{\lambda}^{-,j+1}(x) F_{\lambda}^{-,j}(x') \Big)
          \Bigr] \; dx'
          \Bigr\} \; d\lambda,
\end{align*}
where $E$ is the resolution of the identity of $A$ (cf.
Theorem~\ref{stone}) and the index $j$ is to be understood modulo
$n$, that is to say, if $j = n$, then $j + 1 = 1$.
\end{prop}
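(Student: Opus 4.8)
The plan is to insert the resolvent kernel $K$ of Theorem~\ref{theo1} into Stone's formula (Theorem~\ref{stone}, applied with $T=A$) and to evaluate the resulting double limit $\delta\to0^+$, $\eps\to0^+$ using the limiting absorption principle (Theorem~\ref{lim.abs}) together with the conjugation symmetry of the kernel (Lemma~\ref{conjug}).

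Fix $x\in N$ and $0<\eps<\delta$. By Theorem~\ref{theo1} (the kernel representation is in fact valid for every $\lambda$ with $\Im\lambda\neq0$ by Proposition~\ref{expr.resol}) one has, for real $\lambda$,
\[ \bigl[R(\lambda-i\eps,A)-R(\lambda+i\eps,A)\bigr]f(x)=\int_N\bigl[K(x,x',\lambda-i\eps)-K(x,x',\lambda+i\eps)\bigr]f(x')\,dx'. \]
Since $\lambda$ is real, $\lambda+i\eps=\overline{\lambda-i\eps}$, so Lemma~\ref{conjug} gives $K(x,x',\lambda+i\eps)=\overline{K(x,x',\lambda-i\eps)}$ and hence the bracket equals $2i\,\Im K(x,x',\lambda-i\eps)$. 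Combined with the prefactor $\frac{1}{2\pi i}$ of Stone's formula, the integrand carries the factor $\frac{1}{\pi}\,\Im K(x,x',\lambda-i\eps)$.

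I would then perform the inner limit $\eps\to0^+$ at fixed $\delta$. Part~(1) of Theorem~\ref{lim.abs} yields the pointwise convergence $\Im K(x,x',\lambda-i\eps)\to\Im K(x,x',\lambda)$, where the limit is the $-$-sign branch of Definition~\ref{def.K}, i.e.\ exactly the branch appearing in the asserted formula; part~(2) supplies the $\eps$-uniform majorant $|K(x,x',\lambda-i\eps)|\le N(\lambda,\delta)e^{\gamma(x+x')}$. As $f$ vanishes outside the compact set $B$, the function $N(\lambda,\delta)e^{\gamma x}\bigl(\int_B e^{\gamma x'}|f(x')|\,dx'\bigr)$ dominates the integrand and is integrable in $x'$ over $B$ and, since $N(\cdot,\delta)\in L^1_{\mathrm{loc}}$ and $h$ is bounded on $[a,b]$, also in $\lambda$ over $[a+\delta,b-\delta]$. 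Dominated convergence then lets me pass the limit under both integrals, producing $\frac{1}{\pi}\int_{a+\delta}^{b-\delta}h(\lambda)\int_N\Im K(x,x',\lambda)f(x')\,dx'\,d\lambda$. For the outer limit $\delta\to0^+$ the inner $\lambda$-function is now fixed and, by the same bound, lies in $L^1_{\mathrm{loc}}([a_1,\infty))$, so a further dominated-convergence argument extends the integration to $(a,b)$. Finally, for $\lambda<a_1$ every $\xi_k(\lambda)$ is purely imaginary, whence $w(\lambda)$ and the functions $F^{-,\cdot}_\lambda$ are real and $\Im K(x,x',\lambda)=0$; this confines the effective range of integration to $(a,b)\cap[a_1,\infty)$. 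Writing out the $-$-sign form of $K$ from Definition~\ref{def.K}, split according to whether $x'\in(x,+\infty)_{N_j}$ or $x'\in N\setminus(x,+\infty)_{N_j}$ and summed against ${\mathbf{1}}_{\overline{N_j}}(x)$, then reproduces the stated expression.

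The main obstacle is the rigorous justification of these two limit exchanges, and this is exactly where the two parts of Theorem~\ref{lim.abs} and the compact support of $f$ are indispensable: the exponential growth $e^{\gamma(x+x')}$ of the kernel is harmless only because $f$ is compactly supported, while the $L^1_{\mathrm{loc}}$ control of $N(\cdot,\delta)$ tames the (at most $L^1$) singularity of $1/w$ at $\lambda=a_1$ arising in the confluent case $a_1=\dots=a_n$. A secondary point to be noted rather than laboured is that Stone's formula only provides convergence in the strong $L^2$-topology; computing the limit pointwise in $x$ by dominated convergence and using the a.e.\ uniqueness of $L^2$-limits identifies $h(A)E(a,b)f$ with the explicit representative on the right-hand side.
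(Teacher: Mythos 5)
Your argument is correct and follows essentially the same route as the paper: insert the kernel of Theorem~\ref{theo1} into Stone's formula, use the conjugation symmetry of Lemma~\ref{conjug} to reduce the resolvent difference to $2i\,\Im K(x,x',\lambda-i\eps)$, and pass to the limit via the limiting absorption principle (Theorem~\ref{lim.abs}) and dominated convergence, with the compact support of $f$ and $N(\cdot,\delta)\in L^1_{\mathrm{loc}}$ supplying the majorant. The only deviation is presentational: the paper pairs against an arbitrary compactly supported $g\in H$ so that the strong-operator-topology limits in Stone's formula commute with $(\cdot,g)_H$ (and Fubini then extracts the pointwise kernel), whereas you compute the representative pointwise and appeal to a.e.\ uniqueness of $L^2$-limits; both devices settle the same issue.
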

\begin{proof}
The proof is analogous to that of Lemma~3.13 in \cite{fam2}. Let $g
\in H$ be vanishing outside $B$. Then
\begin{align}
   & \sprd {h(A) E(a,b) f},{g},H = \sprd{\lim_{\delta \rightarrow 0^{+}}
    \lim_{\varepsilon \rightarrow 0^{+}}
                \frac{1}{2 \pi i} \int_{a+\delta}^{b-\delta} h(\lambda)
                \bigl[ R(\lambda - \varepsilon i, A)
                - R(\lambda + \varepsilon i, A) \bigr]
                \; d \lambda \, f},
          {g},H \label{(2)} \\
  =& \lim_{\delta \rightarrow 0^{+}} \lim_{\varepsilon \rightarrow 0^{+}}
       \frac{1}{2 \pi i}
       \sprd{\int_{a+\delta}^{b-\delta} h(\lambda)
              \bigl[ R(\lambda - \varepsilon i,A)
              - R(\lambda + \varepsilon i, A) \bigr]
                \; d \lambda \, f},
            {g},H \label{(3)} \displaybreak[0]\\
  =& \lim_{\delta \rightarrow 0^{+}} \lim_{\varepsilon \rightarrow 0^{+}}
       \frac{1}{2 \pi i} \int_{a+\delta}^{b-\delta} h(\lambda)
       \sprd{\bigl[ R(\lambda - \varepsilon i, A)
                - R(\lambda + \varepsilon i, A) \bigr]
                f},
            {g},H \; d \lambda \label{(4)} \displaybreak[0]\\
=& \lim_{\delta, \varepsilon \rightarrow 0^{+}}
       \frac{1}{2 \pi i} \int_{(a+\delta,b-\delta) \cap [a_1, + \infty)}
    h(\lambda)  \sprd{\int_{N} f(x') \bigl[ K(\cdot,x',\lambda - i \varepsilon)
                - K(\cdot,x',\lambda + i \varepsilon) \bigr] \; dx'},
            {g(\cdot)},H \; d \lambda \label{(5)} \displaybreak[0] \\
  =& \lim_{\delta, \varepsilon \rightarrow 0^{+}}
        \frac{1}{2 \pi i} \int_{(a+\delta,b-\delta) \cap [a_1, + \infty)}
       h(\lambda) \sprd{\int_{N} f(x') \bigl[ K(\cdot,x',\lambda - i \varepsilon)-
                \overline{K(\cdot,x',\lambda - i \varepsilon)} \bigr]
                \; dx'},
             {g(\cdot)},H \; d \lambda \label{(6)} \displaybreak[0] \\
  =& \lim_{\delta, \varepsilon \rightarrow 0^{+}}
         \frac{1}{2 \pi i} \int_{ (a+\delta,b-\delta) \cap [a_1, + \infty)} h(\lambda)
         \sprd{\int_{N} f(x') \, 2 i \, \Im(
                K(\cdot,x',\lambda - i \varepsilon)) \; dx'},
              {g(\cdot)},H \; d \lambda \label{(7)} \displaybreak[0] \\
  =& \lim_{\delta \rightarrow 0^{+}} \frac{1}{\pi}
         \int_{(a+\delta,b-\delta) \cap [a_1, + \infty)} h(\lambda)
         \sprd{\int_{N} f(x') [ \lim_{\varepsilon \rightarrow 0^{+}}
                \Im(K(\cdot,x',\lambda - i \varepsilon)) ] \; dx'},
              {g(\cdot)},H \; d \lambda \label{(8)} \displaybreak[0] \\
  =& \sprd{\frac{1}{\pi} \int_{(a,b) \cap [a_1, + \infty)} h(\lambda)
 \int_N f(x') \Im(K(\cdot,x',\lambda - i 0)) \; dx' \; d \lambda},
          {g(\cdot)},H \label{(9)} \displaybreak[0] \\
 = & \int_{N} \frac{1}{\pi} \int_ {(a,b) \cap [a_1, + \infty)}
          h(\lambda) \Bigl\{ \int_{N} f(x') \Im \Bigl[  \frac{1}{ w(\lambda)} \sum_{j=1}^{n}
          {\mathbf{1}}_{\overline{N_j}}(x)
          \Big(
          {\mathbf{1}}_{( x, +\infty )_{N_j}}(x')
          F_{\lambda}^{-,j}(x) F_{\lambda}^{-,j+1}(x') \label{(10)}\\
  & \qquad \quad \strut + {\mathbf{1}}_{N \setminus ( x, +\infty )_{N_j}}(x')
    F_{\lambda}^{-,j+1}(x) F_{\lambda}^{-,j}(x') \Big)
          \Bigr] \; dx'
          \Bigr\} \;
          d\lambda \, g(x) \; dx. \nonumber
\end{align}
Here, the justifications for the equalities are the following:
\begin{itemize}
\item[\eqref{(2)}:] Stone's formula (Theorem~\ref{stone}).
\item[\eqref{(3)}:] After applying the operator valued integral to $f$, the
      two limits are in $H$. So they commute with the scalar product in
      $H$.
\item[\eqref{(4)}:] $\sprd {\cdot f},{g},H$ is a continuous linear form on
      $\lmd (H)$, and can therefore be commuted with the vector-valued
      integration. Note that $\lambda \mapsto R(\lambda,A)$ is continuous on
      the half-plane $\{ \lambda \in \C : \Re(\lambda) < a_1 \}$, since the
      resolvent is holomorphic outside the spectrum, cf. Proposition~\ref{selfadjoint}.
\item[\eqref{(5)}:] Theorem~\ref{theo1}.
\item[\eqref{(6)}:] Lemma~\ref{conjug}.
\item[\eqref{(7)}:] $z-\overline{z} = 2i \cdot \hbox{Im\,}z \ \forall z \in \C$.
\item[\eqref{(8)}:] Dominated convergence. Since supp$\, f$, supp$\, g$
     and $[a,b]$ are compact, we use the limiting absorption principle
     (Theorem~\ref{lim.abs}).
\item[\eqref{(9)}:] Fubini's Theorem.
\item[\eqref{(10)}:] Definition~\ref{def.K}.
\end{itemize}
The assertion follows, since $g$ was arbitrary with compact support.
\end{proof}
The unpleasant point about the formula in the above proposition is the apparent cut along the diagonal $\{x = x'\}$ expressed by the characteristic functions in the variable $x'$. In fact, there is no discontinuity and the
two integrals recombine with respect to $x'$. This is a consequence of the next lemma that gives an explicit representation of the integrand
above.

In the following, we use the convention
\[ a_{n+1} := + \infty,
\]
in order to unify notation and we set
\[ \xi_j' := i \xi_j.
\]
%
\begin{lem} \label{l-calc.im}
Let $j,k \in \{ 1, \cdots, n \}$ and let $\lambda$ be fixed in
$(a_p,a_{p+1})$, with $p \in \{ 1, \cdots, n \}$.
Then $\Im \bigl[ \frac{1}{w} \bigl( F_\lambda^{-,j+1} \bigr)_j
(x) \bigl( F_\lambda^{-,j} \bigr)_k (x') \bigr]$ is given by the following
expressions, respectively:
\begin{itemize}
\item If $k \ge j > p$ or $j \geq k > p$ {\bf (Case (a))}
    \[ \Im \Bigl( \frac{1}{w} \Bigr) e^{-\xi'_j x - \xi'_k x'},
    \]
\item If $j < k \le p$ or $k < j \le p$ {\bf (Case (b))}
    \begin{align*}
      & \Im \Bigl( \frac{1}{w} \Bigr) \cos(\xi_j x) \cos(\xi_k x') - \Im
        \bigl( \frac{1}{w} \Bigr) \sin(\xi_j x) \sin(\xi_k x') \\
      & \qquad \strut - \Re \Bigl( \frac{1}{w} \Bigr) \cos(\xi_j x)
        \sin(\xi_k x') - \Re \Bigl( \frac{1}{w} \Bigr) \sin(\xi_j x)
        \cos(\xi_k x'),
    \end{align*}
\item If $j = k \leq p$ {\bf (Case (b), $j=k$)}
    \begin{align*}
      & \Im \Bigl( \frac{1}{w} \Bigr) \cos(\xi_j x) \cos(\xi_k x') - \Im
        \Bigl( \frac{s_k}{w} \Bigr) \sin(\xi_j x) \sin(\xi_k x') \\
      & \qquad \strut - \Re \Bigl( \frac{1}{w} \Bigr) \cos(\xi_j x)
        \sin(\xi_k x') - \Re \Bigl( \frac{1}{w} \Bigr) \sin(\xi_j x)
        \cos(\xi_k x'),
    \end{align*}
\item If $j \leq p < k$ {\bf (Case (c))}
    \[ \Im \Bigl( \frac{1}{w} \Bigr) e^{-\xi'_k x'} \cos(\xi_j x) + \Im
        \Bigl( \frac{1}{iw} \Bigr) e^{-\xi'_k x'} \sin(\xi_j x),
    \]
\item If $k \leq p < j$ {\bf (Case (d))}
    \[ \Im \Bigl( \frac{1}{w} \Bigr) e^{-\xi'_j x} \cos(\xi_k x') - \Im
        \Bigl( \frac{1}{iw} \Bigr) e^{-\xi'_j x} \sin(\xi_k x').
    \]
\end{itemize}
\end{lem}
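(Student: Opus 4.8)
The plan is to substitute the explicit forms of the two generalized eigenfunctions into the bracket and expand everything into real and imaginary parts, treating separately those branches on which $\lambda$ lies above or below the threshold $a_m$. Since the superscript $j+1$ differs from the branch index $j$ (note $j+1 \not\equiv j \bmod n$ for $n \ge 2$), Definition~\ref{gen.eig} gives $\bigl(F_\lambda^{-,j+1}\bigr)_j(x) = e^{-i\xi_j(\lambda)x}$, while $\bigl(F_\lambda^{-,j}\bigr)_k(x')$ equals $e^{-i\xi_k(\lambda)x'}$ when $k \neq j$ and $\cos(\xi_j x') - i s_j \sin(\xi_j x')$ when $k=j$. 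The whole computation is then governed by the position of $\lambda \in (a_p,a_{p+1})$: with the branch cut fixed in Definition~\ref{gen.eig}, $\xi_m(\lambda)$ is real and positive for $m \le p$, whereas for $m>p$ one has $\lambda - a_m < 0$, so $\xi_m(\lambda) = -i\xi_m'$ with $\xi_m' = \sqrt{(a_m-\lambda)/c_m} > 0$. Hence $e^{-i\xi_m(\cdot)} = e^{-\xi_m'(\cdot)}$ is a real decaying exponential for $m>p$, and $\cos(\xi_m\cdot) = \cosh(\xi_m'\cdot)$, $\sin(\xi_m\cdot) = -i\sinh(\xi_m'\cdot)$.

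The engine of the proof is a single algebraic identity for $s_j$. From the definitions $s_j = -\bigl(\sum_{l\neq j} c_l\xi_l\bigr)/(c_j\xi_j)$ and $w = -i\sum_l c_l\xi_l$ one reads off $\sum_l c_l\xi_l = iw$, whence
\[ \frac{s_j-1}{w} = -\frac{\sum_l c_l\xi_l}{c_j\xi_j\, w} = \frac{-i}{c_j\xi_j}. \]
This has two complementary consequences. If $j \le p$, then $\xi_j$ is real, so $(s_j-1)/w$ is purely imaginary and therefore $\Re(s_j/w) = \Re(1/w)$. If $j > p$, then $\xi_j$ is purely imaginary, so $(s_j-1)/w$ is real and therefore $\Im(s_j/w) = \Im(1/w)$. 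These two facts are exactly what collapses the $s_j$-terms in the two cases where $k=j$ (and $w$ does not vanish on $(a_p,a_{p+1})$ since $\lambda \neq a_m$ for every $m$, by Lemma~\ref{w.esti}).

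With this in hand, each case is a direct expansion. In Case (a) both factors are real exponentials, or, for $k=j$, a product $e^{-\xi_j'x}\bigl(\cosh(\xi_j'x') - s_j\sinh(\xi_j'x')\bigr)$; pulling $\Im$ through the real prefactor leaves $\Im(1/w)$ when $k\neq j$, and for $k=j$ the identity $\Im(s_j/w)=\Im(1/w)$ turns $\cosh-\sinh$ into $e^{-\xi_j'x'}$. In the Cases (b) both arguments oscillate: one multiplies out the two factors $\cos-i\sin$ (with an extra $s_j$ when $k=j$) and reads off the four terms through $\Im(1/w)$ and $\Re(1/w)$, the subcase $j=k$ using $\Re(s_j/w)=\Re(1/w)$ to reduce the coefficient of $\cos(\xi_j x)\sin(\xi_k x')$. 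In the Cases (c) and (d) exactly one factor oscillates while the other decays, and rewriting the cross terms via $\Im(1/(iw)) = -\Re(1/w)$ brings them to the stated form. The only genuinely non-routine point, and thus the main obstacle, is the treatment of the diagonal entries $k=j$, where the complex quantity $s_j$ must be shown to recombine into the clean expressions; this is precisely where the identity $(s_j-1)/w = -i/(c_j\xi_j)$, together with the reality respectively imaginarity of $\xi_j$ on the two sides of the threshold $a_p$, does the work.
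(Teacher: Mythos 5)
Your method is the same as the paper's: the published proof likewise reduces the lemma to ``pure calculation'' after recording the real and imaginary parts of $(F_\lambda^{-,j})_k$ according to whether $\xi_k$ is real ($k\le p$) or purely imaginary ($k>p$), exactly the case split you set up. What you add, and what the paper leaves implicit at this point, is the identity $(s_j-1)/w=-i/(c_j\xi_j)$ (the paper only invokes $s_j-1=-iw/f_j$ later, on the way to Theorem~\ref{maintheo}); its two consequences, $\Re(s_j/w)=\Re(1/w)$ for $j\le p$ and $\Im(s_j/w)=\Im(1/w)$ for $j>p$, are precisely what is needed to bring the diagonal cases $k=j$ into the stated form (collapsing $\cosh-\sinh$ into $e^{-\xi_j'x'}$ in Case (a), and replacing $\Re(s_j/w)$ by $\Re(1/w)$ in the $\cos(\xi_jx)\sin(\xi_kx')$ term of Case (b) with $j=k$). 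You have correctly identified this as the only non-mechanical step, and your treatment of it is right.

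One concrete point does not survive scrutiny: the claim that the expansion ``brings them to the stated form'' in \emph{both} Cases (c) and (d). Since $j\neq k$ there, the quantity to evaluate is $\Im\bigl(\frac1w e^{-i\xi_jx}e^{-i\xi_kx'}\bigr)$, which in either case has the shape $(\text{real decaying exponential})\cdot\Im\bigl(\frac1w(\cos\theta-i\sin\theta)\bigr)=(\cdots)\bigl(\Im(\frac1w)\cos\theta-\Re(\frac1w)\sin\theta\bigr)$, with $\theta=\xi_jx$ in Case (c) and $\theta=\xi_kx'$ in Case (d). The two cases are the identical computation with the roles of $x$ and $x'$ exchanged, so the sine coefficient must be $-\Re(1/w)=+\Im(1/(iw))$ in both. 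This matches the displayed Case (c) but has the \emph{opposite} sign to the displayed Case (d), which carries $-\Im(1/(iw))$. So either you did not actually execute Case (d), or you are asserting agreement with a formula that your own method contradicts; as it stands, Case (d) of the statement appears to contain a sign error (the fourth equation of Case (d) in Lemma~\ref{l-syst.qlm} is, on inspection, consistent only with the $+\Im(1/(iw))$ version), and a careful proof should either derive the corrected sign or explicitly flag the discrepancy rather than paper over it.
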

%
\begin{proof}
Since $\lambda$ belongs to $(a_p,a_{p+1})$, $\xi_j(\lambda)$ is a
real number, if and only if $j \leq p$. Otherwise, $\xi_j$ is a
purely imaginary number and we have $\xi_j =: -i \xi'_j$.

Now, the proof is pure calculation, using the following expressions
for the generalized eigenfunctions in the case $j \neq k$
\begin{align*}
  \Re \bigl( F_{\lambda}^{-,j} \bigr)_k (x) &= \begin{cases}
        \cos(\xi_k x), & \text{if } \xi_k \in \R, \text{ i.e. } k \leq
            p, \\
        e^{- \xi'_k x}, & \text{if } \xi_k \in i \R, \text{ i.e. } k >
            p,
      \end{cases} \\
  \Im \bigl( F_{\lambda}^{-,j} \bigr)_k (x) &= \begin{cases}
        - \sin(\xi_k x), & \text{if } \xi_k \in \R, \text{ i.e. } k
            \leq p,  \\
        0, & \text{if } \xi_k \in i \R, \text{ i.e. } k > p,
    \end{cases}
\end{align*}
and for $j = k$
\begin{align*}
  & \Re \bigl( F_{\lambda}^{-,k} \bigr)_k (x) = \left\{ \begin{array}{ll} \cos(\xi_k x) + \Im(s_k)
    \sin(\xi_k x), & \text{if } \xi_k \in \R, \text{ i.e. } k \leq p, \\
    \Re \left( \frac{1}{2} (1 + s_k) \right) e^{- \xi'_k x} + \Re \left(
        \frac{1}{2} (1 - s_k) \right) e^{\xi'_k x}, & \text{if } \xi_k
        \in i \R, \text{ i.e. } k > p,
    \end{array} \right. \\
  &\Im \bigl( F_{\lambda}^{-,k} \bigr)_k (x) = \left\{ \begin{array}{ll}
    - \Re(s_k) \sin(\xi_k x), & \text{if } \xi_k \in \R, \text{ i.e. } k
        \leq p, \\
    \Im \left( \frac{1}{2} (1 + s_k) \right) e^{- \xi'_k x} + \Im \left(
        \frac{1}{2} (1 - s_k) \right) e^{\xi'_k x}, & \text{if } \xi_k
        \in i \R, \text{ i.e. } k > p,
    \end{array} \right.
\end{align*}
respectively.
\end{proof}
%
\begin{theo}
Take $f \in H = \prod_{j=1}^n L^2(N_j)$, vanishing almost everywhere outside a
compact set $B \subset N$ and let $- \infty < a < b < + \infty$. Then for any continuous
scalar function $h$ defined on the real line and for all $x \in N$
\begin{equation} \label{e-remrecomb}
\bigl( h(A) E(a,b)f \bigr)(x) =
          \int_{[a,b]\cap [a_1, +\infty)} \!\!\!h(\lambda)
          \sum_{j=1}^{n}
          {\mathbf{1}}_{\overline{N_j}}(x)
          \Bigl\{
          \int_N f(x')
          \Im \Bigl[\frac{1}{ w(\lambda)}
          F_{\lambda}^{-,j+1}(x) F_{\lambda}^{-,j}(x') \Bigr]
          dx'
          \Bigr\} \;
          d\lambda,
\end{equation}
where again the index $j$ is to be understood modulo $n$, i.e, if
$j = n$, then $j + 1 = 1$.
\end{theo}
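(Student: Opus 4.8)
The plan is to compare \eqref{e-remrecomb} termwise with the formula of Proposition~\ref{res.id} and to show that the only apparent discrepancy, namely the splitting by the characteristic functions ${\mathbf{1}}_{(x,+\infty)_{N_j}}$ and ${\mathbf{1}}_{N\setminus(x,+\infty)_{N_j}}$ in the variable $x'$, is spurious. For $x'\in\overline{N_k}$ with $k\ne j$, and for $x'\in\overline{N_j}$ with $x'<x$, both formulas already carry the same integrand $\Im\bigl[\frac{1}{w(\lambda)}F_\lambda^{-,j+1}(x)F_\lambda^{-,j}(x')\bigr]$, so nothing is to be shown there (the difference between the open interval $(a,b)$ and the closed $[a,b]$ is immaterial, the endpoints being Lebesgue-null). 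The entire issue is thus confined to the diagonal block $\{(x,x')\in N_j\times N_j:\ x'>x\}$, where Proposition~\ref{res.id} carries $\Im\bigl[\frac{1}{w(\lambda)}F_\lambda^{-,j}(x)F_\lambda^{-,j+1}(x')\bigr]$, and I must show this equals $\Im\bigl[\frac{1}{w(\lambda)}F_\lambda^{-,j+1}(x)F_\lambda^{-,j}(x')\bigr]$.

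First I would record the elementary but decisive observation that, setting $S(x,x'):=\frac{1}{w(\lambda)}F_\lambda^{-,j+1}(x)F_\lambda^{-,j}(x')$, the integrand of Proposition~\ref{res.id} on the region $x'>x$ is exactly $S(x',x)$. Hence the claim reduces to the symmetry $\Im S(x,x')=\Im S(x',x)$ for $x,x'\in N_j$. This is precisely where Lemma~\ref{l-calc.im} enters, now specialized to $k=j$: since $j=k$ can satisfy neither $j\le p<k$ nor $k\le p<j$, only two of its cases are relevant, Case~(a) (valid when $j=k>p$) and the variant of Case~(b) with $j=k$ (valid when $j=k\le p$). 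In Case~(a) the expression $\Im\bigl(\frac1w\bigr)\,e^{-\xi'_jx-\xi'_jx'}$ is manifestly invariant under $x\leftrightarrow x'$. In the $j=k$ variant of Case~(b) the first two summands $\Im\bigl(\frac1w\bigr)\cos(\xi_jx)\cos(\xi_jx')$ and $-\Im\bigl(\frac{s_j}{w}\bigr)\sin(\xi_jx)\sin(\xi_jx')$ are symmetric, while the last two combine into $-\Re\bigl(\frac1w\bigr)\sin\bigl(\xi_j(x+x')\bigr)$, again symmetric. Thus $\Im S$ is symmetric on $N_j\times N_j$ in both regimes, which is the analytic content of ``there is no discontinuity'': the two pieces of the $x'$-integral carry the same integrand across the diagonal and recombine into a single integral over all of $N$, yielding \eqref{e-remrecomb}.

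For orientation it is worth noting that the same symmetry can be obtained by a direct computation that bypasses the case distinction. Expanding the two products by means of Definition~\ref{gen.eig}, where $F_{\lambda,j}^{-,j+1}(y)=e^{-i\xi_j(\lambda)y}$ and $F_{\lambda,j}^{-,j}(y)=\cos(\xi_jy)-is_j\sin(\xi_jy)$, one finds
\[
 \frac{1}{w}F_\lambda^{-,j}(x)F_\lambda^{-,j+1}(x')-\frac{1}{w}F_\lambda^{-,j+1}(x)F_\lambda^{-,j}(x')
 =\frac{-i(1-s_j)}{w}\,\sin\bigl(\xi_j(x'-x)\bigr)=\frac{\sin\bigl(\xi_j(x'-x)\bigr)}{c_j\xi_j},
\]
where the last equality uses $1-s_j=\frac{1}{c_j\xi_j}\sum_{l=1}^n c_l\xi_l$ together with $w=-i\sum_{l=1}^n c_l\xi_l$ (recall the $-$ sign is in force for $\Im(\lambda)\le 0$). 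The right-hand side is real, so the imaginary parts of the two products coincide, confirming the recombination independently of Lemma~\ref{l-calc.im}.

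The only step that requires genuine care is precisely the tunnel regime $j>p$, where $\xi_j$ is purely imaginary: writing $\xi_j=-i\xi_j'$ with $\xi_j'>0$, one has $\sin\bigl(\xi_j(x'-x)\bigr)=-i\sinh\bigl(\xi_j'(x'-x)\bigr)$ and $c_j\xi_j=-ic_j\xi_j'$, so the ratio equals $\sinh\bigl(\xi_j'(x'-x)\bigr)/(c_j\xi_j')\in\R$; for $j\le p$ the quantity $\xi_j$ is already real and the reality is immediate. Thus the trigonometric identities silently turn into hyperbolic ones, and the verification that reality (equivalently, the $x\leftrightarrow x'$ symmetry of $\Im S$) is preserved across the propagation/decay threshold $p$ is the genuine, if modest, obstacle; everything else is bookkeeping of the characteristic functions.
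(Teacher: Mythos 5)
Your proposal is correct and follows essentially the same route as the paper: the paper's proof also reduces the claim to the symmetry in $x$ and $x'$ of the $j=k$ expressions in cases (a) and (b) of Lemma~\ref{l-calc.im}. Your supplementary direct computation showing that the difference of the two products equals the real quantity $\sin\bigl(\xi_j(x'-x)\bigr)/(c_j\xi_j)$ is a welcome extra check (indeed more careful than the paper's literal claim that the two products themselves coincide, which holds only after taking $\Im(\,\cdot\,/w)$), but it is not a different method.
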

%
\begin{proof}
All the work has already been done. It remains to inspect the formulae for $j =
k$ in the cases (a) and (b) of Lemma~\ref{l-calc.im}, to observe that the
expressions are symmetric in $x$ and $x'$. So for $x,x' \in N_j$ with $x < x'$
we find $F_\lambda^{-,j}(x) F_\lambda^{-,j+1}(x') = F_\lambda^{-,j}(x')
F_\lambda^{-,j+1}(x)$, which implies the assertion.
\end{proof}

%
%

%
%
\section{Symmetrization} \label{spe.rep}
%
%
%
%
As was already explained in the introduction, the aim of this
section will be to find complex numbers $q_{l,m}$, $l,m \in \{1,
\dots, n\}$, such that the resolution of identity of $A$ can be
written as
\begin{equation} \label{specrepre}
  \left( E(a,b) f \right) (x) = \int_a^b \sum_{l,m = 1}^n q_{lm}(\lambda)
     F_\lambda^{-,l}(x) \int_N \overline{F_\lambda^{-,m}}(x')f(x') \; d x'
    \; d \lambda,
\end{equation}
in order to eliminate the cyclic structure of the formula in Proposition~\ref{res.id}.

In this section we shall often suppress the dependence on $\lambda$
of several quantities for the ease of notation, so $s_j =
s_j(\lambda)$, $q_{l,m} = q_{l,m}(\lambda)$, $\xi_j =
\xi_j(\lambda)$, $\xi_j' = \xi_j'(\lambda)$ and $w = w(\lambda)$.
%
\begin{lem} \label{lem-Leb}
Given $x \in N_j$, equation \eqref{specrepre} is satisfied for all
$a_1 \le a < b < + \infty$ and all $f \in L^2(N)$ with compact
support, if and only if for all $j = 1, \dots, n$
\begin{equation} \label{e-LebLem}
  \Im \Bigl[ \frac{1}{w} F_\lambda^{-,j+1} (x) F_\lambda^{-,j} (x') \Bigr] =
    \sum_{l,m = 1}^n q_{lm}(\lambda) F_\lambda^{-,l}\bigr (x)
    \overline{F_\lambda^{-,m}} (x')
\end{equation}
for almost all $x' \in N$ and $\lambda \ge a_1$. Here again the
index $j$ has to be understood modulo $n$, that is to say, if $j=n$,
$j+1=1$.
\end{lem}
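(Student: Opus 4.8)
The plan is to prove Lemma~\ref{lem-Leb} as an equivalence by comparing the representation of $E(a,b)f$ just established in equation \eqref{e-remrecomb} (with $h\equiv 1$) against the desired symmetrized ansatz \eqref{specrepre}, and then invoking the freedom in the test function $f$ and the endpoints $a,b$ to strip away the integrations. First I would take $h\equiv 1$ in \eqref{e-remrecomb}, obtaining
\[
\bigl(E(a,b)f\bigr)(x)=\int_{[a,b]\cap[a_1,+\infty)}\sum_{j=1}^n \mathbf{1}_{\overline{N_j}}(x)\Bigl\{\int_N f(x')\,\Im\Bigl[\tfrac{1}{w}F_\lambda^{-,j+1}(x)F_\lambda^{-,j}(x')\Bigr]\,dx'\Bigr\}\,d\lambda,
\]
and set this equal to the right-hand side of \eqref{specrepre}, in which $\overline{F_\lambda^{-,m}}(x')$ appears. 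For a fixed $x\in N_j$ the indicator $\mathbf{1}_{\overline{N_j}}(x)$ selects exactly the $j$-th summand, so both sides become $\lambda$-integrals over $[a,b]\cap[a_1,+\infty)$ of an inner $x'$-integral against $f$. The equality of the two $\bigl(E(a,b)f\bigr)(x)$ is guaranteed for all admissible $a<b$ and all compactly supported $f\in L^2(N)$; the content of the lemma is that this forces the pointwise-in-$(x',\lambda)$ identity \eqref{e-LebLem}.

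The core of the argument is the standard ``integrated equality implies almost-everywhere equality'' reasoning, carried out first in $\lambda$ and then in $x'$. Fixing $x\in N_j$, define for each $x'$ the scalar function
\[
\Phi(x',\lambda):=\Im\Bigl[\tfrac{1}{w}F_\lambda^{-,j+1}(x)F_\lambda^{-,j}(x')\Bigr]-\sum_{l,m=1}^n q_{lm}(\lambda)\,F_\lambda^{-,l}(x)\,\overline{F_\lambda^{-,m}}(x'),
\]
so that the two representations agree iff $\int_{[a,b]\cap[a_1,+\infty)}\int_N f(x')\,\Phi(x',\lambda)\,dx'\,d\lambda=0$ for all $a<b$ and all compactly supported $f$. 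Letting $a,b$ range over all intervals and using that the inner integral $\lambda\mapsto\int_N f(x')\Phi(x',\lambda)\,dx'$ is locally integrable (which follows from the local integrability estimates on $K$, hence on $w^{-1}$ and the $s_j$, established in Theorem~\ref{lim.abs} and Lemma~\ref{sj}), the Lebesgue differentiation theorem yields $\int_N f(x')\,\Phi(x',\lambda)\,dx'=0$ for almost every $\lambda\ge a_1$. Then, letting $f$ range over a dense family of compactly supported $L^2$ functions, the fundamental lemma of the calculus of variations gives $\Phi(x',\lambda)=0$ for almost every $x'\in N$ and almost every $\lambda$, which is precisely \eqref{e-LebLem}. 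The reverse implication is immediate: substituting \eqref{e-LebLem} into \eqref{e-remrecomb} and re-integrating reproduces \eqref{specrepre}.

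I expect the main obstacle to be bookkeeping rather than depth. The delicate points are, first, justifying the interchange of the $\lambda$- and $x'$-integrations and the application of the differentiation theorem, for which one needs the integrand to be locally integrable in $\lambda$ uniformly enough in $x'$ on the compact support of $f$ — this is exactly what the bounds $|K(x,x',\lambda-i\eps)|\le N(\lambda,\delta)e^{\gamma(x+x')}$ with $N(\cdot,\delta)\in L^1_{\mathrm{loc}}$ from Theorem~\ref{lim.abs} are designed to supply. Second, one must handle the cyclic index convention $j+1 \bmod n$ and the appearance of $\overline{F_\lambda^{-,m}}$ (versus the unconjugated $F_\lambda^{-,j}$ in \eqref{e-remrecomb}) carefully, noting via Lemma~\ref{conjug} and the conjugation relations $\overline{F_\lambda^{+,j}}=F_{\overline\lambda}^{-,j}$ that for real $\lambda\ge a_1$ the conjugated eigenfunctions are expressible through the same trigonometric and exponential building blocks catalogued in Lemma~\ref{l-calc.im}. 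The actual solvability of \eqref{e-LebLem} for the $q_{lm}$ is deferred to the subsequent linear-algebra analysis; here the lemma only reduces the operator identity \eqref{specrepre} to the pointwise functional equation \eqref{e-LebLem}, so no explicit computation of the $q_{lm}$ is required at this stage.
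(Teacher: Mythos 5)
Your proposal is correct and follows essentially the same route as the paper's proof: the easy direction by direct substitution into \eqref{e-remrecomb}, and the converse by first applying the Lebesgue differentiation (Lebesgue point) argument in $\lambda$ to intervals $(a,b)$ to equate the $\lambda$-integrands almost everywhere, and then varying the compactly supported $f$ to strip away the $x'$-integral. The only cosmetic difference is that you package the two integrands into a single difference $\Phi(x',\lambda)$ and cite the fundamental lemma of the calculus of variations where the paper invokes the fundamental theorem of calculus, but the substance is identical.
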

\begin{proof}
If functions $q_{l,m}$, $l,m = 1, \dots, n$, satisfy \eqref{e-LebLem} and if
$a_1 \le a < b < + \infty$, we get by \eqref{e-remrecomb}
\begin{align*}
(E(a,b)f)(x) &=
          \int\limits_{a}^{b}
          \sum\limits_{j=1}^{n}
          {\mathbf{1}}_{\overline{N_j}}(x)
          \Bigl\{
          \int\limits_{N} f(x')
    \sum_{l,m = 1}^n q_{lm}(\lambda) F_\lambda^{-,l}\bigr (x)
    \overline{F_\lambda^{-,m}} (x')
    \;dx'
          \Bigr\} \; d\lambda \\
  &= \int_a^b \sum_{l,m = 1}^n q_{lm}(\lambda)
     F_\lambda^{-,l}(x) \int_N \overline{F_\lambda^{-,m}}(x')f(x') \; d x'
    \; d \lambda,
\end{align*}
which is \eqref{specrepre}.

For the converse implication, let \eqref{specrepre} be satisfied for some $x
\in N_j$ and all $a_1 \le a < b < + \infty$, as well as all $f \in L^2(N)$ with
compact support. This means by \eqref{e-remrecomb}
\[ \int_{a}^{b} \int_{N} f(x') \Im \Bigl[\frac{1}{ w(\lambda)}
          F_{\lambda}^{-,j+1}(x) F_{\lambda}^{-,j}(x') \Bigr]
          dx' \; d\lambda = \int_a^b \sum_{l,m = 1}^n q_{lm}(\lambda)
     F_\lambda^{-,l}(x) \int_N \overline{F_\lambda^{-,m}}(x')f(x') \; d x'
    \; d \lambda.
\]
Firstly, we want to see that the integrands of the
$\lambda$-integrals on both sides in fact have to be equal almost everywhere.
In order to do so, we observe that they both are in $L^1((a_1, +\infty))$ and
use the following general observation: If $I$ is an interval and $g \in
L^1(I)$ satisfies $\int_J g = 0$ for all intervals $J \subseteq I$, then $g =
0$ almost everywhere in $I$. Indeed, in this case we have for any Lebesgue
point $x_0 \in I$ of $g$ and every $\eps > 0$ (cf. \cite[Theorem~8.8]{Rud})
\begin{align*}
  |g(x_0)| &= \biggl| \frac{1}{2 \eps} \int_{x_0 - \eps}^{x_0 + \eps} \bigl(
    g(x_0) - g(x) \bigr) \; d x + \frac{1}{2 \eps}
    \underbrace{\int_{x_0 - \eps}^{x_0 + \eps} g(x) \; d x}_{= 0}
    \biggr| \\
  &\le \frac{1}{2 \eps} \int_{x_0 - \eps}^{x_0 + \eps} \bigl| g(x) - g(x_0)
    \bigr| \; d x \longrightarrow 0 \quad (\eps \to 0),
\end{align*}
which implies $g(x_0) = 0$ for almost all $x_0 \in I$.

This implies
\[ \int_{N} f(x') \Im \Bigl[\frac{1}{ w(\lambda)}
          F_{\lambda}^{-,j+1}(x) F_{\lambda}^{-,j}(x') \Bigr]
          dx' = \int_N  \sum_{l,m = 1}^n q_{lm}(\lambda)
     F_\lambda^{-,l}(x) \overline{F_\lambda^{-,m}}(x')f(x') \; d x'
\]
for almost all $\lambda \ge a_1$ and all $f \in L^2(N)$ with compact support.
By the fundamental theorem of calculus this implies the assertion.
\end{proof}
In a next step, we explicitely write down equation \eqref{e-LebLem} as a linear
system for the values $q_{lm}$.
%
\begin{lem} \label{l-syst.qlm}
The equation
\[ \Im \Bigl[ \frac{1}{w} \bigl( F_\lambda^{-,j+1} \bigr)_j (x) \bigl(
    F_\lambda^{-,j} \bigr)_k (x') \Bigr] = \sum_{l,m = 1}^n
    q_{lm}(\lambda) \bigl( F_\lambda^{-,l}\bigr)_j (x) \bigl(
    \overline{F_\lambda^{-,m}} \bigr)_k (x')
\]
holds for any $(x,x') \in N_j \times N_k$ and $\lambda \in (a_p , a_{p+1})$,
if and only if
\begin{itemize}
\item {\bf Case (a)}: if $k \geq j > p$ or if $j \geq k >p$
    \[ \left \{ \begin{array}{rcl}
        q_{jk} &=& 0 \\
        \sum_{l \neq j} q_{lk} &=& 0\\
        \sum_{m \neq k} q_{jm} &=& 0\\
        \sum_{l \neq j, m \neq k} q_{lm} &=& \Im \left[ \frac{1}{w}
            \right],
      \end{array} \right.
    \]
\item {\bf Case (b)}: if $j < k \leq p$ or if $k < j \leq p$
    \[ \left \{ \begin{array}{llllcl}
        &&&\sum_{l,m} q_{lm} &=& \Im \left[ \frac{1}{w} \right]  \\
        \sum_{l \neq j, m \neq k} q_{lm} &+ \overline{s_k}
            \sum_{l \neq j} q_{lk} &+ s_j \sum_{m \neq k} q_{jm} &+
            s_j \cdot \overline{s_k} \cdot q_{jk} &=& - \Im \left[
            \frac{1}{w} \right]  \\
        \sum_{l \neq j, m \neq k} q_{lm} &+ \overline{s_k}
            \sum_{l \neq j} q_{lk} &+ \sum_{m \neq k} q_{jm} &+
            \overline{s_k} \cdot q_{jk} &=& - i \cdot \Im \left[
            \frac{1}{iw} \right]  \\
        \sum_{l \neq j, m \neq k} q_{lm} &+ \sum_{l \neq j} q_{lk} &+
            s_j \sum_{m \neq k} q_{jm} &+ s_j \cdot q_{jk} &=& i
            \cdot \Im \left[ \frac{1}{iw} \right],
        \end{array} \right.
    \]
\item {\bf Case (b), $j=k$}: if $j = k \leq p$
    \[ \left \{ \begin{array}{llllcl}
        &&&\sum_{l,m} q_{lm} &=& \Im \left[ \frac{1}{w} \right]  \\
        \sum_{l \neq j, m \neq j} q_{lm} &+ \overline{s_j}
            \sum_{l \neq j} q_{lj} &+ s_j \sum_{m \neq j} q_{jm} &+
            s_j \cdot \overline{s_j} \cdot q_{jj} &=& - \Im \left[
            \frac{s_j}{w} \right] \\
        \sum_{l \neq j, m \neq j} q_{lm} &+ \overline{s_j}
            \sum_{l \neq j} q_{lj} &+ \sum_{m \neq j} q_{jm} &+
            \overline{s_j} \cdot q_{jj} &=& - i \cdot \Im \left[
            \frac{1}{iw} \right]  \\
        \sum_{l \neq j, m \neq j} q_{lm} &+ \sum_{l \neq j} q_{lj} &+
            s_j \sum_{m \neq j} q_{jm} &+ s_j \cdot q_{jj} &=& i
            \cdot \Im \left[ \frac{1}{iw} \right],
        \end{array} \right.
    \]
\item {\bf Case (c)}: if $j \leq p < k$
    \[ \left \{ \begin{array}{rcl}
        q_{jk} &=& 0 \\
        \sum_{l \neq j} q_{lk} &=& 0 \\
        \sum_{m \neq k} q_{jm} &=& \Im \left[ \frac{1}{w} \right] \\
        \sum_{l \neq j, m \neq k} q_{lm} + s_j \sum_{m \neq k} q_{jm}
            &=& i \cdot \Im \left[ \frac{1}{iw} \right],
    \end{array} \right.
    \]
\item {\bf Case (d)}: if $k \leq p < j$
    \[ \left\{ \begin{array}{rcl}
        q_{jk} &=& 0 \\
        \sum_{l \neq j} q_{lk} &=& \Im \left[ \frac{1}{w} \right] \\
        \sum_{m \neq k} q_{jm} &=& 0\\
        \sum_{l \neq j, m \neq k} q_{lm} + \overline{s_k}
        \sum_{l \neq j} q_{lk} &=& -i \cdot \Im \left[ \frac{1}{iw}
            \right].
    \end{array} \right.
    \]
\end{itemize}
\end{lem}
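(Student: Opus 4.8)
The plan is to substitute the explicit generalized eigenfunctions of Definition~\ref{gen.eig} into both sides of the identity and to compare coefficients against a fixed basis of elementary functions of $(x,x')$. The decisive preliminary observation is that the restriction $\bigl(F_\lambda^{-,l}\bigr)_j$ depends on the upper index $l$ only through the alternative $l=j$ or $l\neq j$: one has $\bigl(F_\lambda^{-,j}\bigr)_j(x)=\cos(\xi_jx)-is_j\sin(\xi_jx)=:\phi_j(x)$, whereas $\bigl(F_\lambda^{-,l}\bigr)_j(x)=e^{-i\xi_jx}=:\psi_j(x)$ for \emph{every} $l\neq j$, independently of $l$; likewise $\bigl(\overline{F_\lambda^{-,m}}\bigr)_k$ equals $\overline{\phi_k}$ if $m=k$ and $\overline{\psi_k}$ if $m\neq k$. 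Consequently the right-hand side collapses to
\[ \sum_{l,m}q_{lm}\bigl(F_\lambda^{-,l}\bigr)_j(x)\bigl(\overline{F_\lambda^{-,m}}\bigr)_k(x') = q_{jk}\,\phi_j\overline{\phi_k} + \Bigl(\sum_{m\neq k}q_{jm}\Bigr)\phi_j\overline{\psi_k} + \Bigl(\sum_{l\neq j}q_{lk}\Bigr)\psi_j\overline{\phi_k} + \Bigl(\sum_{l\neq j,m\neq k}q_{lm}\Bigr)\psi_j\overline{\psi_k}, \]
so that only the four aggregated quantities $q_{jk}$, $\sum_{l\neq j}q_{lk}$, $\sum_{m\neq k}q_{jm}$, $\sum_{l\neq j,m\neq k}q_{lm}$ enter. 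This explains at once why the systems are phrased exactly through these four combinations.

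Next I would insert the concrete shape of $\phi_r,\psi_r$ dictated by the position of $\lambda\in(a_p,a_{p+1})$: the number $\xi_r$ is real for $r\le p$, so $\phi_r,\psi_r$ lie in the span of $\{\cos(\xi_rx),\sin(\xi_rx)\}$, while $\xi_r\in i\mathbb R$ for $r>p$ (equivalently $\xi_r'>0$), so $\phi_r,\psi_r$ lie in the span of $\{e^{\xi_r'x},e^{-\xi_r'x}\}$, with $\phi_r=\tfrac12(1-s_r)e^{\xi_r'x}+\tfrac12(1+s_r)e^{-\xi_r'x}$. The four Cases (a)--(d) are precisely the four combinations of ``$j,k\le p$'' and ``$j,k>p$''; the diagonal situation $j=k$ must be singled out because there $\phi_j$ and $\overline{\phi_j}$ occur simultaneously, producing the mixed coefficient $s_j\overline{s_j}$ and the term $\Im(s_j/w)$ absent from the off-diagonal formulas.

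After these substitutions both members become finite linear combinations of the four product functions obtained by tensoring the relevant one-variable families (for instance $\cos(\xi_jx)\,e^{-\xi_k'x'}$, $\sin(\xi_jx)\,e^{-\xi_k'x'}$, and their two $e^{+\xi_k'x'}$ companions in Case~(c)); the left-hand member is already written out in Lemma~\ref{l-calc.im}. Since these four product functions are linearly independent on $(0,+\infty)^2$ --- here one uses $\lambda\neq a_j,a_k$, so $\xi_j,\xi_k\neq0$, together with $\Re(s_r)\le0<1$, which guarantees $s_r\neq1$ and hence the independence of $\phi_r$ and $\psi_r$ --- the identity \eqref{e-LebLem} holds for all $(x,x')\in N_j\times N_k$ \emph{if and only if} the coefficients of each product function coincide. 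Reading off these four scalar equations and simplifying by means of $\Im(1/(iw))=-\Re(1/w)$ produces the linear system of the corresponding case; the ``if and only if'' is exactly the equivalence furnished by comparison of coefficients of independent functions.

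The expansion itself is routine; the main obstacle is the bookkeeping of real and imaginary parts. One must track carefully the complex conjugation acting on the $x'$-factor, which replaces $s_k$ by $\overline{s_k}$ and flips the sign of imaginary exponents (as in the computation behind Lemma~\ref{conjug}), the coefficients $\tfrac12(1\pm s_r)$ arising when an eigenfunction on a ``tunnel'' branch is rewritten through $e^{\pm\xi_r'x}$, and the recurring quantities $\Re(1/w)$, $\Im(1/w)$, $\Im(s_j/w)$. I would organize the work by first isolating the purely non-oscillatory (exponential) coefficients, which in Cases~(a), (c), (d) immediately force $q_{jk}=0$ and the vanishing of the appropriate row or column sum, and only afterwards reading off the remaining coefficients; this keeps the algebra in each case down to a transparent $4$-equation system.
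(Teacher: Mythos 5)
Your proposal is correct and follows essentially the same route as the paper: write out the sum $\sum_{l,m}q_{lm}\bigl(F_\lambda^{-,l}\bigr)_j(x)\bigl(\overline{F_\lambda^{-,m}}\bigr)_k(x')$ explicitly in each of the cases of Lemma~\ref{l-calc.im}, compare coefficients against the linearly independent four-element product families ($e^{\pm Ax\pm Bx'}$, resp.\ $\cos/\sin$ products, resp.\ mixed), and obtain the equivalence in both directions from this comparison. Your additional observations --- that the right-hand side only involves the four aggregated sums because $\bigl(F_\lambda^{-,l}\bigr)_j$ depends on $l$ only through $l=j$ versus $l\neq j$, and the explicit bookkeeping of conjugations --- are consistent with, and slightly more detailed than, the paper's argument.
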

%
\begin{proof}
The sum $\sum_{l,m = 1}^n q_{lm}(\lambda) \bigl( F_\lambda^{-,l}\bigr)_j (x) \bigl( \overline{F_\lambda^{-,m}} \bigr)_k (x')$
is explicitely written in the different cases (a), (b), (c) and (d) as it was done in Lemma~\ref{l-calc.im}. Then the linear
independence of the following families of functions is used to get the above systems for the $q_{lm}$'s:

\begin{itemize}
\item {\bf Case (a)}: $e^{A x + B x'}$, $e^{A x - B x'}$, $e^{-A x + B x'}$,
    $e^{-A x - B x'}$,

\medskip

\item {\bf Case (b)}: $\cos(A x) \cos(B x')$, $\cos(A x) \sin(B x')$,
    $\sin(A x) \cos(B x')$, $\sin(A x) \sin(B x')$,

\medskip

\item {\bf Cases (c) and (d)}: $e^{A x} \cos(B x')$, $e^{A x} \sin(B x')$,
    $e^{-A x} \cos(B x')$, $e^{-A x} \sin(B x')$,
\end{itemize}
where $A$ and $B$ are any fixed non-vanishing real numbers.

On the other hand: if the $q_{lm}$ satisfy the system indicated in the lemma, on both sides of the equation of the lemma we have the same linear combination
of the functions given above. Thus equality holds.
\end{proof}

Having the linear systems in the above lemma at hand, it remains to
solve them. Indeed this is possible and we briefly indicate the
necessary steps.

Firstly, if $\lambda < a_1$, we have $\xi_j \in i \R$ for all $j \in
\{ 1, \ldots, n \}$. Thus, for all $j,k \in \{ 1, \ldots, n \}$ and
all $(x,x') \in N_j \times N_k$
\[ \Im \Bigl[ \frac{1}{w} \bigl( F_\lambda^{-,j+1} \bigr)_j (x) \bigl(
    F_\lambda^{-,j} \bigr)_k (x') \Bigr] = \Im \Bigl( \frac{1}{w} \Bigr)
    e^{-\xi'_j x - \xi'_k x'} = 0.
\]

Now, let $\lambda$ be fixed in $(a_p,a_{p+1})$, with $p \in \{ 1, \cdots, n \}$, remembering the convention $a_{n+1}= + \infty$. Due to the first
equation of the corresponding system of cases (a), (c) and (d), the matrix $q
:= (q_{lm})_{l,m=1}^n$ has to be of the form
\[ q = \left( \begin{array}{cc}
    Q_p & \; \vline \; 0\\
    \hline
    0 & \; \vline \; 0
    \end{array} \right)
\]
with a $p \times p$ matrix $Q_p$.

Hence, the equations $\sum_{l \neq j} q_{lk} = 0$ and $\sum_{m \neq k}
q_{jm} = 0$ in case (a) are obviously fulfilled, as well as $\sum_{l \neq j}
q_{lk} = 0$ in case (c) and $\sum_{m \neq k} q_{jm} = 0$ in case (d). This
means that only three equations remain from the cases (a), (c) and (d):
\[ \left \{ \begin{array}{rcl}
    \sum_{l,m} q_{lm} &=& \Im \left[ \frac{1}{w} \right] \\
    \sum_{m \neq k} q_{lm} + \overline{s_k} \sum q_{lk} &=& - i \cdot \Im
        \left[ \frac{1}{iw} \right] \\
    \sum_{l \neq j} q_{lm} + s_j \sum q_{jm} &=& i \cdot \Im \left[
        \frac{1}{iw} \right],
    \end{array} \right.
\]
where in all the sums, $l$ and $m$ belong to $\{ 1, \ldots, p \}$.
Now it is important to note that these three equations are already
contained in the following system corresponding to the case (b),
i.e. the only conditions for the $q_{lm}$'s are the four following
equations for a fixed $(j,k)$ such that $j < k \leq p$ or $k < j
\leq p$:
\begin{equation} \label{e-DemQ:1}
  \left\{ \begin{array}{llllcl}
    &&&\sum_{l,m} q_{lm} &=& \Im \left[ \frac{1}{w} \right]  \\
    \sum_{l \neq j, m \neq k} q_{lm} &+ \overline{s_k} \sum_{l \neq j}
        q_{lk} &+ s_j \sum_{m \neq k} q_{jm} &+ s_j \cdot
        \overline{s_k} \cdot q_{jk} &=& - \Im \left[ \frac{1}{w}
        \right] \\
    \sum_{l \neq j, m \neq k} q_{lm} &+ \overline{s_k} \sum_{l \neq j}
        q_{lk} &+ \sum_{m \neq k} q_{jm} &+ \overline{s_k} \cdot q_{jk}
        &=& - i \cdot \Im \left[ \frac{1}{iw} \right] \\
    \sum_{l \neq j, m \neq k} q_{lm} &+ \sum_{l \neq j} q_{lk} &+ s_j
        \sum_{m \neq k} q_{jm} &+ s_j \cdot q_{jk} &=& i \cdot \Im
        \left[ \frac{1}{iw} \right]
    \end{array} \right.
\end{equation}
and, for $j = k \leq p$:
\begin{equation} \label{e-DemQ:2}
  \left \{ \begin{array}{llllcl}
    &&&\sum_{l,m} q_{lm} &=& \Im \left[ \frac{1}{w} \right]  \\
    \sum_{l \neq j, m \neq j} q_{lm} &+ \overline{s_j} \sum_{l \neq j}
        q_{lj} &+ s_j \sum_{m \neq j} q_{jm} &+ s_j \cdot
        \overline{s_j} \cdot q_{jj} &=& - \Im \left[ \frac{s_j}{w}
        \right] \\
    \sum_{l \neq j, m \neq j} q_{lm} &+ \overline{s_j} \sum_{l \neq j}
        q_{lj} &+ \sum_{m \neq j} q_{jm} &+ \overline{s_j} \cdot q_{jj}
        &=& - i \cdot \Im \left[ \frac{1}{iw} \right]  \\
    \sum_{l \neq j, m \neq j} q_{lm} &+ \sum_{l \neq j} q_{lj} &+ s_j
    \sum_{m \neq j} q_{jm} &+ s_j \cdot q_{jj} &=& i \cdot \Im \left[
    \frac{1}{iw} \right],
    \end{array} \right.
\end{equation}
where, once more, $l$ and $m$ belong to $\{ 1, \ldots, p \}$ in all
the sums.

Now, we denote by $Q_{jk} := \sum_{l \neq j} q_{lk}$ and $Q'_{jk}:=
\sum_{m \neq k} q_{jm}$. Using the fact that $\Im \left[ \frac{1}{w} \right] =
\Re \left[ \frac{1}{iw} \right]$, the above system \eqref{e-DemQ:1} is
\[ \left \{ \begin{array}{lrrcl}
    &\sum_{l \neq j, m \neq k} q_{lm} &&=& \Im \left[ \frac{1}{w} \right] -
        Q_{jk} - Q'_{jk} - q_{jk} \\
    (\overline{s_k} - 1) Q_{jk} + & (s_j - 1) Q'_{jk} +&
        (s_j \overline{s_k} - 1) q_{jk} &=& - 2 \Re \left[
        \frac{1}{iw} \right]  \\
    (\overline{s_k} - 1) Q_{jk} & +& (\overline{s_k} - 1)
        q_{jk} &=& - i \Im \left[ \frac{1}{iw} \right] - \Re \left[
        \frac{1}{iw} \right] = - \frac{1}{iw} \\
    & (s_j - 1) Q'_{jk} + & (s_j - 1) q_{jk} &=& i \Im \left[ \frac{1}{iw}
        \right] - \Re \left[ \frac{1}{iw} \right] =
        \frac{1}{i \overline{w}}.
\end{array} \right.
\]
Since $s_j - 1 = - \frac{i w}{f_j}$, the last three equations may be rewritten
as
\[ \left \{ \begin{array}{rrrcl}
    (-i f_j \overline{w}) Q_{jk} + & (i \overline{f_k} w) Q'_{jk} + &
        (-i f_j \overline{w} + i \overline{f_k} w - |w|^2) q_{jk} &=&
        (2 f_j \overline{f_k}) \Re \left[ \frac{1}{iw} \right] \\
    Q_{jk} + && q_{jk} &=& \frac{\overline{f_k}}{|w|^2} \\
    & Q'_{jk} + & q_{jk} &=& \frac{f_j}{|w|^2} \\
\end{array} \right.
\]
and the Gauss method yields that $q_{jk}$ must vanish for $j \neq
k$.

In the case $j=k$, we rewrite system \eqref{e-DemQ:2} as above and the three
equations to be solved turn out to be
\[ \left \{ \begin{array}{rrrcl}
    (-i f_j \overline{w}) Q_{jj} + & (i \overline{f_j} w) Q'_{jj} + &
        (-i f_j \overline{w} + i \overline{f_j} w - |w|^2)
        q_{jj} &=& (2 f_j \overline{f_j}) \left(\Re \left[
        \frac{1}{iw} \right] - \frac{1}{f_j} \right)  \\
    Q_{jj} + &&  q_{jj} &=& \frac{\overline{f_j}}{|w|^2} \\
    & Q'_{jj} + & q_{jj} &=& \frac{f_j}{|w|^2} \\
\end{array} \right.
\]
The Gauss method once more gives the only possible solution $q_{jj}= \frac{f_j}{|w|^2}$ for any
$j \in \{ 1, \cdots, p \}$.

With this candidate for a solution at hand, it is pure calculation to show
that it is indeed a solution. Thus we have shown the following result.
%
\begin{theo} \label{maintheo}
Let $A$ be defined as in Section~\ref{sec2} and the generalized eigenfunctions
$F_\lambda^{-,j}$ be given by Definition~\ref{gen.eig}. Take $f \in H =
\prod_{j=1}^n L^2(N_j)$, vanishing almost everywhere outside a compact set $B
\subset N$ and let $- \infty < a < b < + \infty$. Then for all $x \in N$
\[ \left( E(a,b) f \right) (x) = \int_a^b \sum_{l = 1}^n q_l(\lambda)
    F_\lambda^{-,l}(x) \int_N
    \overline{F_\lambda^{-,l}}(x')f(x') \; d x' \; d \lambda,
\]
where
\[ q_l(\lambda) := \begin{cases}
            0, & \text{if } \lambda < a_l, \\
                    \frac{f_l(\lambda)}{|w(\lambda)|^2}, & \text{if } a_l <
            \lambda.
                   \end{cases}
\]
Furthermore, for almost all $\lambda \in \R$ the matrix $q_{l,m}
(\lambda) = \delta_{lm} {\mathbf{1}}_{(a_l, +\infty )}(\lambda)
f_l(\lambda)/|w(\lambda)|^2$ is the unique matrix satisfying
\eqref{specrepre}.
\end{theo}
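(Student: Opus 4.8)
The plan is to read the statement as the end point of the reduction carried out in Lemmas~\ref{lem-Leb} and \ref{l-syst.qlm}: by Lemma~\ref{lem-Leb}, identity \eqref{specrepre} holds for all admissible $a,b,f$ if and only if the pointwise identity \eqref{e-LebLem} holds for every $j$, a.e.\ $x'$ and a.e.\ $\lambda \ge a_1$; and by Lemma~\ref{l-syst.qlm} the latter is equivalent, for $(x,x') \in N_j \times N_k$ and $\lambda \in (a_p,a_{p+1})$, to the linear systems (a)--(d) for the entries $q_{lm}(\lambda)$. Thus the whole theorem amounts to solving these systems for each fixed $\lambda$ and showing the solution is unique and diagonal. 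First I would dispose of $\lambda < a_1$: there every $\xi_j$ is purely imaginary, the right-hand side of \eqref{e-LebLem} vanishes identically, so $q \equiv 0$ is forced, in accordance with $q_l = 0$ for $\lambda < a_l$.

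For $\lambda \in (a_p,a_{p+1})$ I would follow the reduction already sketched before the statement. The equations $q_{jk}=0$ occurring in cases (a), (c) and (d) whenever $\max\{j,k\} > p$ force the block form $q = \left( \begin{smallmatrix} Q_p & 0 \\ 0 & 0 \end{smallmatrix} \right)$ with $Q_p$ a $p\times p$ matrix; after this the surviving equations of (a), (c), (d) collapse into three equations that are already contained in the case-(b) systems \eqref{e-DemQ:1} and \eqref{e-DemQ:2}. It therefore remains to solve the case-(b) systems for indices $j,k \le p$. Writing $f_j := c_j \xi_j(\lambda) = \sqrt{c_j(\lambda - a_j)}$ (so that $s_j - 1 = -iw/f_j$), introducing the auxiliary sums $Q_{jk} := \sum_{l\neq j} q_{lk}$ and $Q'_{jk} := \sum_{m\neq k} q_{jm}$, and using $\Im[1/w] = \Re[1/(iw)]$, I would perform Gaussian elimination on the resulting three-equation subsystem in $(Q_{jk}, Q'_{jk}, q_{jk})$, treating the cases $j\neq k$ and $j=k$ separately. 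This yields $q_{jk}=0$ for $j \neq k$ and $q_{jj} = f_j/|w|^2$, which is exactly the announced diagonal matrix and, being the only possible outcome of the elimination, establishes uniqueness.

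The existence half is the verification that this diagonal candidate genuinely satisfies every equation of the case-(b) systems (whence, by the above reduction, all of (a)--(d)). Here I would record the explicit form of the Wronskian on $(a_p,a_{p+1})$, namely $\Re w = -\sum_{l>p}\sqrt{c_l(a_l-\lambda)}$ and $\Im w = -\sum_{l\le p} f_l$, so that $|w|^2 = \bigl(\sum_{l\le p} f_l\bigr)^2 + \bigl(\sum_{l>p}\sqrt{c_l(a_l-\lambda)}\bigr)^2$ and $\Im[1/w] = \bigl(\sum_{l\le p} f_l\bigr)/|w|^2$. Substituting $q_{lm} = \delta_{lm}\mathbf 1_{(a_l,+\infty)}(\lambda)\, f_l/|w|^2$ and computing the relevant sums $\sum_{l,m} q_{lm}$, $Q_{jk}$, $Q'_{jk}$ and $\sum_{l\neq j,m\neq k}q_{lm}$ reduces each equation to an identity among these real quantities --- the ``pure calculation'' announced in the text. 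Feeding the verified diagonal matrix back through Lemma~\ref{l-syst.qlm} and Lemma~\ref{lem-Leb} then gives \eqref{specrepre} in the stated diagonal form, from which one reads off $q_l = f_l/|w|^2$ for $\lambda > a_l$ and $q_l = 0$ otherwise.

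I expect the main obstacle to be neither of these calculations individually but the bookkeeping that glues them together: one must check that the linear-independence argument of Lemma~\ref{l-syst.qlm} is applied with the correct matching of trigonometric and exponential monomials in each case, that the reduction of (a), (c), (d) to subcases of (b) leaves genuinely no extra constraint, and --- most delicately --- that solving the per-pair subsystems in the macro-variables $Q_{jk}, Q'_{jk}$ is consistent with a single global $p\times p$ matrix $q$. This last point is precisely what the substitution in the existence step confirms, so the two halves must be carried out in tandem rather than independently.
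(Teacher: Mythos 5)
Your proposal is correct and follows essentially the same route as the paper: reduction via Lemma~\ref{lem-Leb} and Lemma~\ref{l-syst.qlm}, the block-structure argument collapsing cases (a), (c), (d) into (b), Gaussian elimination in the variables $Q_{jk}$, $Q'_{jk}$, $q_{jk}$ to force the diagonal solution $q_{jj}=f_j/|w|^2$, and a final substitution to verify existence. Your explicit formulas for $\Re w$, $\Im w$ and $\Im[1/w]$ on $(a_p,a_{p+1})$ merely spell out the ``pure calculation'' the paper leaves implicit.
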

%
%
%
%
%
\section{A direct approach} \label{dir.app}
%
%
%
%
We have seen in the preceding section that a matrix $q(\lambda)$ satisfying
\eqref{specrepre} exists and is unique up to a null set. It is the aim of this
section to deduce an alternative representation for $q(\lambda)$, involving
only $n \times n$-matrices and not an $(3 n^2 + 1) \times n^2$ system
as above. Since this approach is essentially independent of the special
setting, it should be more convenient for generalisations.

In the following, let us consider the complex-valued generalized
eigenfunctions $F^{-,k}_{\lambda}$ as functions on $N$ and not as
elements of $\prod_{j=1}^n C^{\infty}(N_{j})$. We introduce the
notation

\[ F_{\lambda}(x) =
  \left(
    \begin{array}{c}
      F_\lambda^{-,1}(x) \\
      \vdots \\
      F_\lambda^{-,n}(x) \\
    \end{array}
  \right).
\]
Denoting by $e_k = (\delta_{lk})_{l=1,\dots,n} = (0, \dots, 0, 1, 0, \dots, 0)^T$ the $k$-th unit vector in $\C^n$, we set $d_1(\lambda) := F_\lambda(0)$
and for $j=2, \dots, n$ we fix $x_j \in N_j$ and set $d_j(\lambda) :=
F_\lambda(x_j)$. Due to the form of our generalized eigenfunctions, we then
have
\begin{align*} \label{basis}
   d_1(\lambda) &= \sum_{k=1}^n e_k = (1, \dots, 1)^T, \\
   d_j(\lambda) &= \beta_j e_j + \alpha_j \sum_{k \neq j} e_k = (\alpha_j,
    \dots, \alpha_j, \beta_j, \alpha_j, \dots, \alpha_j)^T \text{ for } j
    = 2, \dots, n
\end{align*}
for suitable $\alpha_j, \beta_j \in \C$.

Using these vectors, we now define
\[D(\lambda):= \sum_{j=1}^{n}d_{j}(\lambda)e_{j}^{T}.
\]
Since $\alpha_j \neq \beta_j$, for every $j \in \{ 2, \ldots, n \}$, by construction, the matrix $D$ is invertible for any choice
of $(x_2, \ldots, x_n)$ provided that $x_j \neq 0$ for all $j \in \{ 2, \ldots, n \}$. Indeed $\det D = \prod_{j=2}^n (\beta_j - \alpha_j)$. \\
Denoting by $C(\lambda)$ the diagonal matrix with $c_{11}=i$ and $c_{jj}= i
\alpha_j$ for any $j \in \{ 2, \ldots, n \}$, we can formulate our theorem.
%
\begin{theo} \label{direct}
The matrix $q(\lambda):=(q_{lm}(\lambda))_{l,m=1,\ldots,n}$ satisfying
\eqref{specrepre} is given by
\begin{align*}
  q(\lambda) &= (D(\lambda)^{-1})^{T} \, \Im \Bigl(\dfrac{1}{w(\lambda)}\,
 \sum_{j=1}^{n}  e_j d_j(\lambda)^{T} e_{j+1} e_j^{T} D(\lambda) \Bigr)
 (\overline{D(\lambda)}^{-1})^{T} \\
&= (D(\lambda)^{-1})^{T} \, \Im \Bigl(\dfrac{-i}{w(\lambda)}\,
C(\lambda) D(\lambda) \Bigr) (\overline{D(\lambda)}^{-1})^{T}
\end{align*}
for almost all $\lambda > a_1$. As previously, $j$ has to be
understood modulo $n$, that is to say, $j+1=1$ if $j=n$.
\end{theo}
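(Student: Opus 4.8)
The plan is to bypass a full recomputation of $q$ by exploiting that Theorem~\ref{maintheo} already guarantees a unique matrix $q(\lambda)$ satisfying \eqref{specrepre} for almost every $\lambda$, and to recover its closed form by testing the defining identity on a basis. By Lemma~\ref{lem-Leb} this $q$ satisfies \eqref{e-LebLem} for every $x \in N_j$ and, by continuity of the generalized eigenfunctions, for every $x' \in N_k$. Thus the whole task reduces to transcribing \eqref{e-LebLem} into a single matrix equation and inverting.

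First I would read \eqref{e-LebLem} as an identity of sesquilinear forms in the $\C^n$-valued function $F_\lambda$. For $x \in N_j$ and $x' \in N_k$ one has $F_\lambda^{-,l}(x) = e_l^T F_\lambda(x)$ and $\overline{F_\lambda^{-,m}}(x') = e_m^T \overline{F_\lambda(x')}$, so the right-hand side of \eqref{e-LebLem} equals $F_\lambda(x)^T q\, \overline{F_\lambda(x')}$, while $F_\lambda^{-,j+1}(x) = e_{j+1}^T F_\lambda(x)$ and $F_\lambda^{-,j}(x') = e_j^T F_\lambda(x')$ turn the left-hand side into $\Im\bigl[\tfrac{1}{w}\,(e_{j+1}^T F_\lambda(x))(e_j^T F_\lambda(x'))\bigr]$.

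Next I would specialise to the $n^2$ node/branch configurations $(x,x') = (x_a, x_b)$ with $x_1 = 0$, for which $F_\lambda(x_a) = d_a$ and $F_\lambda(x_b) = d_b$ by construction. The right-hand side becomes $d_a^T q\, \overline{d_b} = (D^T q\, \overline{D})_{ab}$, and since the $(a+1)$-th component of $d_a$ is $\alpha_a$ (with the convention $\alpha_1 := 1$), the left-hand side becomes $\Im\bigl[\tfrac{1}{w}\,\alpha_a\,(e_a^T d_b)\bigr]$, i.e.\ the $(a,b)$-entry of $\Im\bigl(\tfrac{1}{w}\sum_j e_j d_j^T e_{j+1} e_j^T D\bigr)$. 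Collecting the $n^2$ scalar equations gives
\[
  D^T q\, \overline{D} = \Im\Bigl( \frac{1}{w} \sum_{j=1}^n e_j d_j^T e_{j+1} e_j^T D \Bigr).
\]
As $\alpha_j \neq \beta_j$ yields $\det D = \prod_{j=2}^n (\beta_j - \alpha_j) \neq 0$, both $D$ and $\overline{D}$ are invertible, and solving for $q$ produces the first asserted expression; the second then follows from the reduction $\sum_{j} e_j d_j^T e_{j+1} e_j^T = \mathrm{diag}(\alpha_1, \dots, \alpha_n) = -i C(\lambda)$, using $d_j^T e_{j+1} = \alpha_j$.

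The step I expect to be most delicate is the passage from a pointwise functional identity in $(x,x')$ to a finite linear system: I must ensure that testing on the $n$ distinguished points already determines $q$, which is precisely where the basis property of $\{d_1, \dots, d_n\}$ — equivalently the invertibility of $D$ — enters, and I must track carefully the single complex conjugation, the cyclic shift $j \mapsto j+1$, and the exact placement of transposes when inverting $D$ and $\overline{D}$. A useful consistency check throughout is that $F_\lambda(x)^T q\, \overline{F_\lambda(x')}$ is real, matching the $\Im$ on the left-hand side of \eqref{e-LebLem}.
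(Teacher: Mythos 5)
Your proposal is correct and follows essentially the same route as the paper's own proof: rewrite \eqref{e-LebLem} via Lemma~\ref{lem-Leb} as $\Im\bigl(\tfrac{1}{w}F_\lambda(x)^T e_{j+1}e_j^T F_\lambda(x')\bigr) = F_\lambda(x)^T q\,\overline{F_\lambda(x')}$, evaluate at the distinguished points $x_j$ to get the $n^2$ scalar equations $\Im\bigl(\tfrac{1}{w}d_j^T e_{j+1}e_j^T d_k\bigr) = d_j^T q\,\overline{d_k}$, assemble them into $D^T q\,\overline{D} = \Im\bigl(\tfrac{1}{w}\sum_j e_j d_j^T e_{j+1}e_j^T D\bigr)$, and invert $D$, with $\sum_j e_j d_j^T e_{j+1}e_j^T = -iC$ giving the second form. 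Your only additions (the continuity remark needed to evaluate at specific points, and the explicit $(a,b)$-entry bookkeeping with $\alpha_1:=1$) are cosmetic and consistent with the paper.
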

%
\begin{proof}
By Lemma~\ref{lem-Leb} the function $q$ satisfies \eqref{e-LebLem}. Using the
matrices and vectors introduced above, for fixed $j \in \{1,\ldots,n\}$ this
can be rewritten as
\[
\Im  \Bigl(\dfrac{1}{w(\lambda)}\, F_{\lambda} (x)^{T} e_{j+1}
e_j^{T} F_{\lambda} (x') \Bigr)
\ = \
F_{\lambda} (x)^{T} \ q(\lambda) \ \overline{F_{\lambda} }(x')
\]
for $x \in N_j$ and $x' \in N$.

Setting $x = x_j$ and $x' = x_k$ we obtain by the definition of
$d_j$
\[ \Im  \Bigl(\dfrac{1}{w(\lambda)}\, d_j (\lambda)^{T} e_{j+1}
    e_j^{T} d_k(\lambda) \Bigr) \ = \ d_j(\lambda)^{T} \, q(\lambda) \,
    \overline{d_k(\lambda)}
\]
for all $j,k \in \{1,\ldots,n\}$ and thus
\[\sum_{k=1}^{n} \sum_{j=1}^{n} e_j  \Im  \Bigl(\dfrac{1}{w(\lambda)}\, d_j (\lambda)^{T} e_{j+1}
e_j^{T} d_k(\lambda) \Bigr) e_k^T \ = \
\sum_{k=1}^{n}\sum_{j=1}^{n} e_j d_j(\lambda)^{T} \, q(\lambda) \, \overline{d_k(\lambda)} e_k^T.
\]
Using $\sum_{k=1}^{n}d_k e_k^T = D, $ we obtain
\[ \Im  \Bigl(\dfrac{1}{w(\lambda)}\, \sum_{j=1}^{n}  e_j d_j(\lambda)^{T} e_{j+1} e_j^{T}
 D \Bigr) \ = \ D^T \, q \, \overline{D}.
\]
Since $D$ is invertible and the relation $\sum_{j=1}^{n}  e_j d_j(\lambda)^{T}
e_{j+1} e_j^{T}= -i C(\lambda)$ holds true, we now get the desired formula for
$q$.
\end{proof}
%
%
%
%
\section{A Plancherel-type Theorem}\label{pla.the}
%
%
%
%
In this section we prove a Plancherel type theorem for a Fourier
type transformation $V$ and its left inverse $Z$ associated with the
generalized eigenfunctions  $(F_{\lambda}^{-,k})_{k = 1, \ldots, n}$
introduced in Definition~\ref{gen.eig}.

Furthermore, we show that the fact that a function $u$ belongs to the
space $D(A^k)$ can be formulated in terms of the decay rate of $Vu$.
These results will be useful for the resolution of evolution problems
involving the spatial operator $A$ as well as for the analysis of the
properties of a solution. For this part, we follow Section~4 of
\cite{fam2}.
%
\begin{defi} \label{Lsigma2}
\begin{enumerate}
\item Let $I \subset \R$ be a closed interval and let $\sigma : I \to \R$ be
    a measurable, non-negative function with $\sigma( \lambda) > 0$ for
    all $\lambda \in \mathring{I}$. Define $L^2(I,\sigma)$ by
    \begin{align*}
      (F,G)_{L^2(I,\sigma)} &:= \int_I \sigma(\lambda) F(\lambda)
        \overline{G}(\lambda) d \lambda, \\
      |F|_{L^2(I,\sigma)} &:= (F,F)_{L^2(I,\sigma)}^{1/2} \\
      L^2(I,\sigma) &:= \{ F : I \rightarrow \C
         \text{ measurable, } |F|_{L^2(I,\sigma)} < + \infty \}.
    \end{align*}
\item Consider now the weights $\sigma_k := q_k$ for $k = 1, \ldots, n$, where
    $q_k$ is given in Theorem~\ref{maintheo}. We endow $L_{\sigma}^2:=
    \prod_{k=1}^n L^2([a_k, + \infty),\sigma_k)$ with the inner product
    \[(F,G)_{\sigma}:= \sum_{k=1}^n (F_k, G_k
        )_{L^2([a_k, +\infty),\sigma_k)}
    \]
    and denote
    \[|F|_{\sigma}:= (F,F)_{\sigma}^{1/2}.
    \]
\end{enumerate}
\end{defi}
%
\noindent
Note that $L_{\sigma}^2$ is a Hilbert space, since it is the product of the Hilbert spaces $L^2([a_k, + \infty),\sigma_k)$.

Now, we define the transformation $V$, together with its (right) inverse $Z$,
which is later on shown to diagonalize $A$.
%
\begin{defi} \label{defV}
\begin{enumerate}
\item For all $f \in L^1(N, \C)$ and $k = 1, \ldots, n$ the function $(Vf)_k
: [a_k , + \infty) \to \C$ is defined by
\[ (Vf)_k(\lambda):= \int_N f(x) \overline{(F_{\lambda}^{-,k})}(x) \; dx.
\]
\item Consider $\chi \in C^{\infty}(\R)$ such that $\chi \equiv 0$ on $(- \infty, a_n + 1)$ and $\chi \equiv 1$ on $(a_n + 2 , + \infty)$. For $G_k \in C^{\infty}([a_k , + \infty), \C)$, such that $\chi G_k$ can be extended by
zero to an element of $\mathcal{S}(\R)$, $k \in \{ 1, \ldots, n \}$, we define $Z(G_1, \ldots, G_n) : N \to \C$ by
\[ Z(G_1, \ldots, G_n)(x):= \sum_{k=1}^n \int_{(a_k , + \infty )}
    \sigma_k(\lambda) G_k(\lambda) (F_{\lambda}^{-,k})(x) \; d \lambda.
\]
\end{enumerate}
\end{defi}
%
Note that in contrast to the considerations in \cite{fam2}, the
functions $(Vf)_k$ here are complex-valued, due to the choice of the
generalized eigenfunctions.
%
\begin{lem} \label{lemPlancherel} Consider $f \in L^2(N)$ with compact
support, $\chi$ as in Definition~\ref{defV} and for $k \in \{1, \dots, n\}$
let again $G_k \in C^\infty([a_k, +\infty))$ be such that $\chi G_k \in
\mathcal{S}(\R)$. Then $G = (G_1, \ldots, G_n) \in L_{\sigma}^2$, $Vf \in
L_{\sigma}^2$, $Z(G) \in L^2(N)=H$ and
\[(G,Vf)_{\sigma} = (Z(G),f)_H.
\]
\end{lem}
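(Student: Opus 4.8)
The plan is to prove the duality identity $(G,Vf)_\sigma = (Z(G),f)_H$ by writing out both sides explicitly as integrals and showing they coincide after an application of Fubini's theorem. The natural strategy is to compute the left-hand side, interchange the order of the $\lambda$-integration (over the spectral variable) and the $x$-integration (over $N$) hidden inside $Vf$, and recognise the result as the right-hand side. Before doing any interchange, however, I must first verify the three membership claims stated in the lemma, since they guarantee that all the integrals involved are finite and that Fubini is applicable.

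First I would check that $Vf \in L_\sigma^2$. Since $f$ has compact support and lies in $L^2(N) \subset L^1_{\mathrm{loc}}$, the integral $(Vf)_k(\lambda) = \int_N f(x)\overline{F_\lambda^{-,k}}(x)\,dx$ is a finite integral over a compact set; using the explicit form of $F_\lambda^{-,k}$ from Definition~\ref{gen.eig} together with the Cauchy--Schwarz inequality, one bounds $|(Vf)_k(\lambda)|$ polynomially in $\lambda$ on the support of $f$. The weight $\sigma_k = q_k = f_k(\lambda)/|w(\lambda)|^2$ is, by Lemma~\ref{w.esti} and the discussion following Theorem~\ref{lim.abs}, locally integrable on $[a_k,+\infty)$ and decays appropriately; combined with the growth of $(Vf)_k$ this should give $\int \sigma_k |(Vf)_k|^2\,d\lambda < \infty$. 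Similarly, $G \in L_\sigma^2$ because $\chi G_k \in \mathcal{S}(\R)$ controls the tail while the local integrability of $\sigma_k$ handles the bounded part near $a_k$. Finally $Z(G) \in H$ follows from the rapid decay of $G_k$ against the at-most-polynomial growth of the generalized eigenfunctions in $\lambda$, making the defining $\lambda$-integral converge in $L^2(N)$.

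With integrability secured, the core computation is direct. Writing
\[
  (G,Vf)_\sigma = \sum_{k=1}^n \int_{a_k}^{+\infty} \sigma_k(\lambda)
    G_k(\lambda) \overline{(Vf)_k(\lambda)}\,d\lambda
\]
and substituting the definition $\overline{(Vf)_k(\lambda)} = \int_N \overline{f(x)} F_\lambda^{-,k}(x)\,dx$, I would apply Fubini to swap the $\lambda$- and $x$-integrals, obtaining
\[
  (G,Vf)_\sigma = \int_N \overline{f(x)} \sum_{k=1}^n \int_{a_k}^{+\infty}
    \sigma_k(\lambda) G_k(\lambda) F_\lambda^{-,k}(x)\,d\lambda\,dx
  = \int_N \overline{f(x)}\, Z(G)(x)\,dx,
\]
where the inner sum-integral is exactly $Z(G)(x)$ by Definition~\ref{defV}. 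Since $H$ is a real Hilbert space but the eigenfunctions are complex-valued (as remarked after Definition~\ref{defV}), I must be careful with conjugation conventions: the right-hand side $\int_N \overline{f}\,Z(G)\,dx$ is precisely $(Z(G),f)_H$ in the convention used, which yields the claim.

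The main obstacle is the rigorous justification of the Fubini interchange, i.e.\ establishing absolute integrability of the double integrand $\sigma_k(\lambda)|G_k(\lambda)||f(x)||F_\lambda^{-,k}(x)|$ over $[a_k,+\infty)\times N$. This requires simultaneously controlling three competing factors: the possible $L^1_{\mathrm{loc}}$-type singularity of $\sigma_k$ near $\lambda = a_k$ (or $\lambda = \alpha$ in the degenerate case where all $a_k$ coincide), the at-most-polynomial growth in $\lambda$ of the eigenfunctions $F_\lambda^{-,k}(x)$ on the compact $x$-support of $f$, and the Schwartz decay of $\chi G_k$ at infinity in $\lambda$. The key technical input is the estimate $|w(\lambda)|^2 \geq \sum_{j} c_j|\lambda - a_j|$ from Lemma~\ref{w.esti}, which tames the weight, together with the compactness of $\operatorname{supp} f$ which confines the $x$-integration and caps the eigenfunction growth. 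Once the joint integrand is shown to be in $L^1([a_k,+\infty)\times N)$ for each $k$, Fubini applies and the remaining steps are purely formal.
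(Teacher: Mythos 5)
Your proposal follows essentially the same route as the paper: write out $(G,Vf)_\sigma$, substitute the definition of $(Vf)_k$, interchange the $\lambda$- and $x$-integrals by Fubini, and recognise the result as $(Z(G),f)_H$, with the interchange justified by the lower bound $|w(\lambda)|^2 \ge \sum_l c_l|\lambda-a_l|$ from Lemma~\ref{w.esti}, the compact support of $f$ and the rapid decay of $\chi G_k$. The paper's proof carries out exactly this plan, estimating $\sigma_k(\lambda)F_\lambda^{-,k}(x)$ case by case on $[a_k,a_{k+1}]\times N$, so your approach is correct and matches the original.
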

%
\begin{proof}
We have
\begin{align}
  (G,Vf)_{\sigma} &= \sum_{k=1}^n \bigl(G_k, (Vf)_k
    \bigr)_{L^2([a_k, +\infty),\sigma_k)} = \sum_{k=1}^n
    \int_{(a_k, +\infty)} \sigma_k(\lambda) G_k(\lambda)
    \overline{(Vf)_k}(\lambda) \; d \lambda \nonumber \\
  &= \sum_{k=1}^n \int_{(a_k, +\infty)} \sigma_k(\lambda) G_k(\lambda)
    \overline{\Bigl( \int_N f(x) \overline{(F_{\lambda}^{-,k})}(x) \; dx
    \Bigr)} \; d \lambda \nonumber \\
  &= \int_N \Bigl( \sum_{k=1}^n \int_{(a_k, +\infty)} \sigma_k(\lambda)
    G_k(\lambda) \bigl(F_{\lambda}^{-,k} \bigr) (x) \; d \lambda \Bigr)
    \overline{f(x)} \; dx = (Z(G),f)_H. \label{fubini}
\end{align}
It only remains to make sure that the assumptions for Fubini's Theorem are satisfied to justify (\ref{fubini}). In fact, it is sufficient to estimate
$(\lambda,x) \mapsto \sigma_k(\lambda) (F_{\lambda}^{-,k})(x)$ on
$[a_k,a_{k+1}] \times N$ for a fixed $k \in \{1, \ldots, n \}$, since $f$ is compactly supported and $G_k$ is rapidly decreasing.

In order to do so, recall that $c_k \xi_k (\lambda) = \sqrt{c_k(\lambda -
a_k)}$ belongs to $\R$ if and only if $\lambda \geq a_k$ and to $i \R$
otherwise and that for any $\lambda > a_1$, we have $|w(\lambda)|^2 \ge
\sum_{l=1}^n c_l |\lambda - a_l|$ due to Lemma~\ref{w.esti}. Thus, putting in
the expression for $s_k(\lambda)$, cf. Definition~\ref{gen.eig}, we get for $x
\in N_k$
\begin{align*}
  \bigl| \sigma_k(\lambda) (F_{\lambda}^{-,k})(x) \bigr| &= \Bigl|
    \frac{c_k \xi_k (\lambda)}{|w(\lambda)|^2} \bigl( \cos(\xi_k
    (\lambda)x) - i s_k(\lambda) \sin(\xi_k (\lambda)x) \bigr) \Bigr| \le
    \frac{|c_k \xi_k (\lambda)|}{\sum_{l=1}^n c_l |\lambda - a_l|}
    (1 + |s_k(\lambda)|) \\
  &\le \frac{|c_k \xi_k (\lambda)|}{\sum_{l=1}^n c_l |\lambda -
  a_l|} \cdot
    \frac{\sum_{l=1}^n c_l |\xi_l(\lambda)|}{|c_k \xi_k (\lambda)|} \le
    \frac{\sum_{l=1}^n \sqrt{c_l} \sqrt{|\lambda - a_l|}}{\sum_{l=1}^n c_l
    |\lambda - a_l|}.
\end{align*}
Furthermore, for $x \in N_j$, $j > k$, we have
\[ |\sigma_k(\lambda) (F_{\lambda}^{-,k})(x)| = \Bigl|
    \frac{c_k \xi_k (\lambda)}{|w(\lambda)|^2} e^{-i \xi_j(\lambda) x}
    \Bigr| \le \frac{1}{\sqrt{c_k} \sqrt{|\lambda - a_k|}} \exp \Bigl(
    -\sqrt{\frac{a_j - \lambda}{c_j}} x \Bigr).
\]
Finally, for $x \in N_j$, $j < k$,
\[ |\sigma_k(\lambda) (F_{\lambda}^{-,k})(x)| = \Bigl|
    \frac{c_k \xi_k (\lambda)}{|w(\lambda)|^2} e^{-i \xi_j(\lambda) x}
    \Bigr| \le \frac{1}{\sqrt{c_k} \sqrt{|\lambda - a_k|}}.
\]
For all three cases the bound is a continuous function of $\lambda$
on $[a_k,a_{k+1}]$ and so, belongs to $L^1_{\mathrm{loc}}([a_k,a_{k+1}])$,
which is enough for Fubini's Theorem, due to the properties of $f$ and $G_k$.
\end{proof}
%
\begin{lem} \label{prop.Vf}
Consider $f \in \prod_{k=1}^n C_c^\infty(N_k)$. Then, for every $k \in \{ 1,
\ldots, n \}$,
\[ (Vf)_k \in C([a_k, +\infty)) \cap C^{\infty}([a_k, a_{k+1})) \cap \ldots \cap C^{\infty}([a_n, + \infty))
\]
and $\chi \sigma_k (Vf)_k \in \mathcal{S}(\R)$ with $\chi$ as in
Definition~\ref{defV}.
\end{lem}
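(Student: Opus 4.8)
The plan is to reduce the whole statement to the one-variable behaviour of the functions $\lambda \mapsto \xi_j(\lambda) = \sqrt{(\lambda - a_j)/c_j}$ of Definition~\ref{gen.eig} and to the Fourier transforms of the smooth, compactly supported data. First I would split the defining integral over the branches,
\[ (Vf)_k(\lambda) = \int_{N_k} f_k(x)\, \overline{F^{-,k}_{\lambda,k}(x)}\; dx + \sum_{j \neq k} \int_{N_j} f_j(x)\, \overline{F^{-,k}_{\lambda,j}(x)}\; dx, \]
and record the integrands explicitly: on $N_k$ the conjugate of $\cos(\xi_k x) - i s_k \sin(\xi_k x)$, and on each $N_j$ with $j \neq k$ the conjugate of $e^{-i \xi_j x}$. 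For $\lambda$ in a fixed band $(a_p, a_{p+1})$ an index $j \leq p$ produces a \emph{real} $\xi_j$ and hence an oscillatory integrand, whereas $j > p$ gives $\xi_j = -i\xi_j'$ and an exponentially \emph{decaying} integrand $e^{-\xi_j' x}$; since each $f_j$ is supported in a fixed compact subset of $(0,+\infty)$, all these integrals converge and no boundary term ever occurs. The only source of irregularity in $\lambda$ is the map $\lambda \mapsto \xi_j(\lambda)$, which is real-analytic for $\lambda \neq a_j$ and continuous, with value $0$, at $\lambda = a_j$.

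For the regularity assertion I would treat continuity and band-wise smoothness separately. Continuity on $[a_k, +\infty)$ follows from the continuity of each $\xi_j$ together with uniform convergence of the integrands over the fixed compact support; the only delicate factor is $s_k$, which blows up like $\xi_k^{-1}$ as $\lambda \to a_k^+$, but it enters solely through $s_k \sin(\xi_k x)$, and $\sin(\xi_k x)/\xi_k$ extends analytically across $\xi_k = 0$, so that product stays bounded and continuous. For smoothness on an open band $(a_p, a_{p+1})$ every $\xi_j$ is real-analytic there and every $f_j \in C_c^\infty(N_j)$, so one may differentiate under the integral sign arbitrarily often, the dominating functions being continuous in $\lambda$ and compactly supported in $x$. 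At the left endpoint $\lambda = a_k$ of the first band the same $s_k$-mechanism applies: writing $s_k \sin(\xi_k x) = -\bigl(\sum_{l \neq k} c_l \xi_l\bigr) c_k^{-1} \cdot \sin(\xi_k x)/\xi_k$ exhibits this term as a product of functions analytic in $\lambda$ near $a_k$, so the $N_k$-contribution, and hence $(Vf)_k$, is smooth from the right at $a_k$.

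The core of the lemma is the Schwartz estimate, and here I would exploit that $\chi$ is supported in $(a_n + 1, +\infty)$, a region in which \emph{all} $\xi_j(\lambda)$ are real and real-analytic. There each branch integral is a Fourier transform $\Phi_j(\xi_j) = \int_0^\infty f_j(x) e^{i \xi_j x}\, dx$ of a function in $C_c^\infty$, whence $\Phi_j \in \mathcal{S}(\R)$, while the $N_k$-term is a finite combination of such transforms carrying the additional, polynomially bounded factor $s_k$. Since $\xi_j(\lambda) \sim \sqrt{\lambda/c_j} \to +\infty$ and all derivatives $\xi_j^{(m)}(\lambda)$ stay bounded on $(a_n+1,+\infty)$, the chain rule shows that $(Vf)_k$ and each of its $\lambda$-derivatives decay faster than any power of $\lambda$; multiplying by the smooth weight $\sigma_k = c_k \xi_k / |w|^2$, which is polynomially bounded (indeed $\sigma_k = O(\lambda^{-1/2})$ by the lower bound $|w(\lambda)|^2 \geq \sum_j c_j |\lambda - a_j|$ of Lemma~\ref{w.esti}), and by $\chi$ preserves this rapid decay. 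Extending by zero on $(-\infty, a_n+1)$ then yields $\chi \sigma_k (Vf)_k \in \mathcal{S}(\R)$.

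The main obstacle is the behaviour at the interior thresholds $a_{k+1}, \dots, a_n$, where a single $\xi_p$ genuinely develops a square-root behaviour; this is precisely why the statement partitions $[a_k, +\infty)$ into the half-open bands $[a_p, a_{p+1})$ and, on top of global continuity, asks only for smoothness band by band. I would localise at each such threshold and isolate the only terms whose $\xi_p$-dependence is singular there, namely the $N_p$-integral and the $\xi_p$-part of $s_k$ inside the $N_k$-integral, all remaining terms being real-analytic across $a_p$; establishing the precise claimed regularity of their sum at the band edge is the delicate point, the $s_k \propto \xi_k^{-1}$ structure again being the only available device for taming the odd, $\sin$-type contributions that carry the square-root singularity. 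Everything else reduces to routine differentiation under the integral sign and to the Paley--Wiener/Schwartz behaviour of the transforms $\Phi_j$.
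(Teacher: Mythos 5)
Your overall strategy is the same as the paper's: the printed proof is a single sentence observing, with reference to Section~4 of \cite{fam2}, that $(Vf)_k$ is a linear combination of Fourier and Laplace transforms of functions in $C_c^\infty(N_j)$. Your branch-by-branch decomposition, the treatment of the apparent singularity of $s_k$ at $\lambda=a_k$ via the even entire function $\sin(\xi_k x)/\xi_k$, and the Paley--Wiener argument for the rapid decay of $\chi\sigma_k(Vf)_k$ on $\operatorname{supp}\chi\subset(a_n+1,+\infty)$ (using $\sigma_k=c_k\xi_k/|w|^2=O(\lambda^{-1/2})$ from Lemma~\ref{w.esti}) are a correct and much more explicit version of what the paper leaves implicit.

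There is, however, a genuine gap at exactly the point you flag yourself: the one-sided $C^\infty$ regularity at the interior thresholds $a_{k+1},\dots,a_n$. For $p>k$ the $N_p$-contribution to $(Vf)_k$ is $G_p(\xi_p(\lambda))$ with $G_p(\mu)=\int_0^\infty f_p(x)e^{i\mu x}\,dx$ entire and $\xi_p(\lambda)=\sqrt{(\lambda-a_p)/c_p}$; its $\lambda$-derivative is $G_p'(\xi_p)\cdot\bigl(2\sqrt{c_p(\lambda-a_p)}\bigr)^{-1}$, which blows up as $\lambda\to a_p^+$ whenever $G_p'(0)=i\int_0^\infty xf_p(x)\,dx\neq 0$. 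The only other $\xi_p$-dependent term is the one entering through $\overline{s_k}$ in the $N_k$-integral, and it contributes a multiple of $\xi_p(\lambda)\int_{N_k}f_k(x)\sin(\xi_k(\lambda) x)\,dx$; since this involves $f_k$ rather than $f_p$, the two non-smooth contributions cannot cancel for general $f$, so the mechanism you invoke (``the $s_k\propto\xi_k^{-1}$ structure again'') does not close this step. As far as I can see, membership in $C^\infty([a_p,a_{p+1}))$ for $p>k$ can only be meant as smoothness on the open band together with the separately stated continuity; at $\lambda=a_k$ itself your argument is fine (at least when the $a_j$ are pairwise distinct). The paper's own proof is silent on all of this, deferring entirely to \cite{fam2}, so you have correctly located the delicate point --- but your proof as written does not establish the band-edge regularity it sets out to prove, and an additional argument (or a weaker reading of the statement) is needed there.
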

%
\begin{proof}
As in Section~4 of \cite{fam2}, $(Vf)_k$ is a linear combination
of Fourier and Laplace transforms of functions in
$C_c^\infty(N_j)$, $j \in \{ 1, \ldots, n \}$.
\end{proof}
The next step is to show that $V$ is an isometry and $Z$ is its
right inverse. The proof of the following theorem is precisely as in
\cite{fam2}.
%
\begin{theo} \label{V.iso}
Endow $\prod_{k=1}^n C_c^\infty(N_k)$ with the norm of $H = \prod_{k=1}^n
L^2(N_k)$. Then
\begin{enumerate}
\item  $V : \prod_{k=1}^n C_c^\infty(N_k) \to
    L^2_{\sigma}$ is isometric and can therefore be extended to an
    isometry $\tilde{V} : H \to L^2_{\sigma}$.

    \noindent In particular, for all $f \in H$ we have
    $|\tilde{V}f|_{\sigma}^2 = (f,f)_H$.
\item $Z = \tilde{V}^{-1}$ on $\tilde{V}(H)$.
\item $Z$ can be extended to a continuous operator on $L^2_{\sigma}$.
\end{enumerate}
\end{theo}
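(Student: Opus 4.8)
The plan is to establish the three claims in order, with the isometry property of $V$ as the heart of the matter and the extension results following by standard density and continuity arguments.

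The key tool is Lemma~\ref{lemPlancherel}, which yields the adjunction formula $(G, Vf)_{\sigma} = (Z(G), f)_H$ for compactly supported $f \in L^2(N)$ and admissible $G$. The strategy for part~(i) is to apply this formula with a judicious choice of $G$ that lets us recover $|Vf|_{\sigma}^2$ on the left and $(f,f)_H$ on the right. First I would take $f \in \prod_{k=1}^n C_c^\infty(N_k)$ and invoke Lemma~\ref{prop.Vf} to guarantee that $Vf$ itself is admissible, i.e.\ that $\chi \sigma_k (Vf)_k \in \mathcal{S}(\R)$ and $(Vf)_k$ is smooth on each frequency layer. Setting $G = Vf$ in Lemma~\ref{lemPlancherel} gives $|Vf|_{\sigma}^2 = (G, Vf)_{\sigma} = (Z(Vf), f)_H$, so the isometry claim reduces to showing $Z(Vf) = f$ in $H$ for such $f$. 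This identity is precisely the expansion in generalized eigenfunctions: unwinding the definitions of $Z$ and $V$, the right-hand side is
\[
  \sum_{k=1}^n \int_{(a_k,+\infty)} \sigma_k(\lambda) \Bigl( \int_N f(x')
  \overline{F_{\lambda}^{-,k}}(x') \, dx' \Bigr) F_{\lambda}^{-,k}(x) \, d\lambda,
\]
which, after recalling $\sigma_k = q_k$ and comparing with the diagonal spectral representation in Theorem~\ref{maintheo}, equals $\bigl(E(a_1, +\infty) f\bigr)(x)$; since $\sigma(A) \subset [a_1, +\infty)$ by Proposition~\ref{selfadjoint}, this is just $f$. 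The main obstacle here is the rigorous passage from the resolution of the identity $E(a,b)$ for finite $b$ to the full projection $E(a_1, +\infty)$, which requires controlling the limit $b \to +\infty$; the decay guaranteed by $\chi \sigma_k (Vf)_k \in \mathcal{S}(\R)$ in Lemma~\ref{prop.Vf} is exactly what makes this limit legitimate, so I would lean on that Schwartz-class decay to justify the interchange of limit and integration.

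Once $V$ is shown to be isometric on the dense subspace $\prod_{k=1}^n C_c^\infty(N_k)$ of $H$, its unique extension to an isometry $\tilde{V} : H \to L^2_{\sigma}$ is immediate from the bounded linear extension theorem, establishing part~(i). For part~(ii), I would argue that $Z$ inverts $\tilde{V}$ on $\tilde{V}(H)$: the computation above already shows $Z(Vf) = f$ for $f$ in the dense subspace, so $Z = \tilde{V}^{-1}$ holds on the image of that subspace, and this extends to all of $\tilde{V}(H)$ by continuity. Part~(iii) follows because the adjunction formula in Lemma~\ref{lemPlancherel} exhibits $Z$ as the adjoint of the isometry $\tilde{V}$ (up to the pairing between $L^2_{\sigma}$ and $H$), whence $Z$ is bounded and extends continuously to all of $L^2_{\sigma}$. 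Since the paper explicitly notes that ``the proof \ldots is precisely as in \cite{fam2},'' I would present these three steps compactly, citing Section~4 of \cite{fam2} for the details and emphasizing only the points where the multi-layered structure of $\sigma(A)$ and the complex-valued $(Vf)_k$ differ from that reference.
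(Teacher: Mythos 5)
Your proposal is correct and takes essentially the route the paper intends: the paper's own proof is only the citation to Section~4 of \cite{fam2}, and your reconstruction --- apply Lemma~\ref{lemPlancherel} with $G=Vf$ (licensed by Lemma~\ref{prop.Vf}), reduce to $Z(Vf)=f$ via Theorem~\ref{maintheo} and $E(a_1,b)\to I$ as $b\to+\infty$, then extend by density and read off the boundedness of $Z$ from the adjunction formula --- is exactly that argument. The only small wrinkle, present in the paper itself rather than in your write-up, is that $(Vf)_k$ satisfies the conclusion of Lemma~\ref{prop.Vf} ($\chi\sigma_k(Vf)_k\in\mathcal{S}(\R)$ and smoothness only on each frequency layer) rather than the literal hypotheses of Lemma~\ref{lemPlancherel}, but the Fubini estimates in that lemma's proof go through unchanged for this $G$.
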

%
Note that it is not clear, whether $Z$ is injective, in contrast to
the situation for $n=2$, see \cite{fam2}. There the proof of
injectivity relies on a choice of generalized eigenfunctions that
are real-valued. Here, a choice of such eigenfunctions would
probably destroy the feature that the matrix $q$, found in
Theorem~\ref{maintheo}, is diagonal. And it is this property that
will allow us to carry over from \cite{fam2} the proof of
Theorem~\ref{DAj}. However, the injectivity of $Z$ is not important
for the applications we have in mind.

In the sequel we shall again write $V$ for $\tilde{V}$ for simplicity.

Our final aim is to show that $V$ diagonalizes the operator, i.e. $V(A^j
u)(\lambda) = \lambda^j V(u) (\lambda)$ for all $u \in D(A^j)$. In order to
formulate this precisely for a measurable function $\psi : \R \to \R$ we
denote the corresponding multiplication operator on $L^2_\sigma$ by $M_\psi$,
i.e. for a function $F \in L^2_\sigma$ we have
\[ (M_\psi F)_k(\lambda) = \psi(\lambda) F_k(\lambda), \qquad \lambda \in [a_k,
    +\infty), \ k \in \{1, \dots, n\}.
\]
Then we have the following lemma, whose proof is again analogous to
\cite[Lemma~4.12]{fam2}.
\begin{lem} \label{mult.op}
\begin{enumerate}
\item Let $j \in \N$ and $p_j : \R \to \R$ be defined by
$p_j(x) = x^j$. Then for any $f \in D(A^j)$ we have
\[V (A^j f) = M_{p_j} Vf.
\]
\item Let $\Psi : [a_1,\infty) \rightarrow \R$
be a bounded, measurable function. Then $\Psi(A)$ defined by the
spectral Theorem is a bounded operator on $H$ and for all $f \in H$
we have
\[ V (\Psi (A) f)
   \ = \
   M_{\Psi} (Vf).
\]
\end{enumerate}
\end{lem}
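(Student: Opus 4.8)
The strategy is to establish the multiplication property first for the smooth, compactly supported functions on which $V$ was originally defined, and then pass to the general case by density and continuity. For part (i), take $f \in \prod_{k=1}^n C_c^\infty(N_k)$ first. Since $f \in D(A^j)$ for every $j$ (smooth functions with compact support away from $0$, or at least satisfying the transmission conditions, are in the domain of all powers of $A$), I would compute $(V(A^j f))_k(\lambda)$ directly from Definition~\ref{defV}:
\[ (V(A^j f))_k(\lambda) = \int_N (A^j f)(x) \overline{(F_\lambda^{-,k})}(x) \; dx.
\]
The key observation is that $F_\lambda^{-,k}$ is a formal eigenfunction, $A F_\lambda^{-,k} = \lambda F_\lambda^{-,k}$ (Remark~\ref{rem.eig}), so I want to integrate by parts $2j$ times to move the operator $A$ from $f$ onto $\overline{F_\lambda^{-,k}}$, picking up a factor $\lambda^j$ at each application. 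Because $A$ acts branchwise as $-c_k\partial_x^2 + a_k$ and both $f$ and $F_\lambda^{-,k}$ satisfy $(T_0)$ and $(T_1)$, Green's formula (Lemma~\ref{greens.formula}) shows that the boundary terms at the central node cancel; the boundary terms at $+\infty$ vanish because $f$ has compact support. This yields
\[ (V(A^j f))_k(\lambda) = \lambda^j \int_N f(x) \overline{(F_\lambda^{-,k})}(x) \; dx = \lambda^j (Vf)_k(\lambda) = (M_{p_j} Vf)_k(\lambda),
\]
which is exactly the claim on the dense subspace.

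**Passage to $D(A^j)$.** The extension to general $f \in D(A^j)$ is where the one genuine subtlety lies. I would argue that $\prod_k C_c^\infty(N_k)$ is a core for $A^j$, so that for $f \in D(A^j)$ there is a sequence $f_n$ in the core with $f_n \to f$ and $A^j f_n \to A^j f$ in $H$. Since $V$ is an isometry (Theorem~\ref{V.iso}(i)), $V f_n \to V f$ and $V(A^j f_n) \to V(A^j f)$ in $L^2_\sigma$. On the other hand $V(A^j f_n) = M_{p_j} V f_n$ by the computation above. The delicate point is that $M_{p_j}$ is \emph{unbounded} on $L^2_\sigma$, so I cannot simply pass to the limit inside $M_{p_j}$ by continuity. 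Instead I would extract a subsequence converging almost everywhere: $V f_n \to Vf$ in $L^2_\sigma$ gives an a.e.-convergent subsequence, hence $M_{p_j} V f_n = p_j \cdot V f_n \to p_j \cdot Vf$ almost everywhere, while simultaneously $V(A^j f_n) \to V(A^j f)$ in $L^2_\sigma$ and so along a further subsequence almost everywhere. Matching the two a.e.\ limits forces $V(A^j f) = M_{p_j} Vf$ a.e., which is the desired identity in $L^2_\sigma$.

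**Part (ii) via functional calculus.** For the bounded measurable function $\Psi$, the operator $\Psi(A)$ is bounded by the spectral theorem, so $\Psi(A)f \in H$ for all $f \in H$, and $M_\Psi$ is bounded on $L^2_\sigma$ since $\Psi$ is bounded. The cleanest route is to reduce to part (i) on polynomials and then invoke the uniqueness of the spectral measure. Concretely, since $V$ is an isometry diagonalizing $A$ in the sense just proved for the monomials $p_j$, the pushforward of the spectral measure of $A$ under $V$ is multiplication by $\lambda$ on $L^2_\sigma$; then for any bounded measurable $\Psi$ the functional calculus transforms accordingly, giving $V\Psi(A) = M_\Psi V$. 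Alternatively, and more in keeping with the reference to \cite[Lemma~4.12]{fam2}, I would verify the identity first for $\Psi$ continuous with compact support by approximating $\Psi$ uniformly by polynomials on each compact spectral band and using part (i) together with boundedness of both $\Psi(A)$ and $M_\Psi$; a monotone-class or dominated-convergence argument then extends it to arbitrary bounded measurable $\Psi$. The main obstacle throughout is the unboundedness of $M_{p_j}$ in part (i), handled by the a.e.-convergence argument; part (ii) is then essentially a soft consequence of the spectral theorem and the isometry property already established in Theorem~\ref{V.iso}.
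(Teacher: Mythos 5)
The decisive step in your part (i) --- the passage from $\prod_k C_c^\infty(N_k)$ to all of $D(A^j)$ --- rests on the claim that $\prod_k C_c^\infty(N_k)$ is a core for $A^j$, and that claim is false. Elements of $C_c^\infty(N_k)$ vanish identically near the node, and the graph norm of $A$ controls the branchwise $H^2$-norm, hence the $C^1$-norm up to the boundary; so if $f_n \in \prod_k C_c^\infty(N_k)$ with $f_n \to f$ and $Af_n \to Af$ in $H$, then $f_n(0) \to f(0)$ and $f(0)=0$. Thus no $f \in D(A)$ with $f(0) \neq 0$ can be approximated in graph norm by such functions: the graph closure of $A$ restricted to $\prod_k C_c^\infty(N_k)$ is the minimal operator, a proper symmetric restriction of $A$ with deficiency indices $(n,n)$, not $A$ itself. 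Your a.e.-subsequence device for handling the unboundedness of $M_{p_j}$ is sound, but it never gets off the ground because the approximating sequence need not exist. (The integration by parts on $\prod_k C_c^\infty(N_k)$ itself is fine, and would also go through on a genuine core consisting of compactly supported smooth functions satisfying the transmission conditions, since Green's formula kills the node terms; but you would then have to exhibit such a core for $A^j$, which you do not do.)

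The paper gives no proof beyond citing \cite[Lemma~4.12]{fam2}, and the intended route runs in the opposite order from yours: one first proves the statement for spectral projections, then for bounded $\Psi$, and only then for $p_j$. Concretely, Theorem~\ref{maintheo} says exactly that $E(a,b)f = Z\bigl( {\mathbf{1}}_{(a,b)} Vf \bigr)$; combining this with the isometry of $V$ and the adjoint relation of Lemma~\ref{lemPlancherel} yields $V E(a,b) f = {\mathbf{1}}_{(a,b)} Vf$, hence part (ii) for arbitrary bounded measurable $\Psi$ by the usual approximation, and part (i) then follows by monotone convergence from $f \in D(A^j) \Leftrightarrow \int \lambda^{2j}\, d(E_\lambda f, f) < \infty$, with no core needed. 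Your part (ii), as a deduction from part (i), is essentially correct (one can pass from $VA = M_{p_1}V$ on $D(A)$ to $VR(z,A) = (M_{p_1}-z)^{-1}V$ and then to the spectral projections), but it inherits the gap in part (i). To repair the proof, either establish the identity on an actual core, or reverse the logical order as above.
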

%
Finally, this leads to a characterization of the spaces $D(A^j)$.
%
\begin{theo} \label{DAj}
For $j \in \N$ the following statements are equivalent:
\begin{enumerate}
\item $u \in D(A^j)$,
\item $\lambda \mapsto \lambda^{j} (Vu)(\lambda) \in L_{\sigma}^2$,
\item $\lambda \mapsto \lambda^{j} (Vu)_k(\lambda) \in
    L_{\sigma}^2([a_k,+\infty))$, $k=1,\ldots,n$.
\end{enumerate}
\end{theo}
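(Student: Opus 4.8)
The plan is to dispose of the formal equivalences first and then concentrate on the implication (ii)$\,\Rightarrow\,$(i), which carries all the analytic content. The equivalence (ii)$\,\Leftrightarrow\,$(iii) is purely structural: since $L^2_\sigma = \prod_{k=1}^n L^2([a_k,+\infty),\sigma_k)$ is an orthogonal product, a vector-valued function lies in $L^2_\sigma$ if and only if each of its components lies in the corresponding factor, and the $k$-th component of $\lambda \mapsto \lambda^{j}(Vu)(\lambda)$ is exactly $\lambda \mapsto \lambda^{j}(Vu)_k(\lambda)$. For (i)$\,\Rightarrow\,$(ii) I would invoke Lemma~\ref{mult.op}(i): if $u \in D(A^j)$ then $V(A^j u) = M_{p_j} Vu$, so $\lambda \mapsto \lambda^{j}(Vu)(\lambda) = V(A^j u)$; as $A^j u \in H$ and $V$ maps $H$ into $L^2_\sigma$ by Theorem~\ref{V.iso}, this function lies in $L^2_\sigma$, which is (ii).

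The core is (ii)$\,\Rightarrow\,$(i), and the key step is to identify the scalar spectral measure of $A$ at $u$ with the explicit measure built from $Vu$. For this I would apply Lemma~\ref{mult.op}(ii) to the bounded, real-valued, measurable function $\Psi = {\mathbf{1}}_{(a,b)}$, for which $\Psi(A) = E(a,b)$, obtaining $V(E(a,b)u) = M_{{\mathbf{1}}_{(a,b)}} Vu$. Using that $E(a,b)$ is an orthogonal projection and that $V$ is isometric (Theorem~\ref{V.iso}), one computes
\[ (E(a,b)u, u)_H = \| E(a,b)u \|_H^2 = |M_{{\mathbf{1}}_{(a,b)}} Vu|_\sigma^2 = \sum_{k=1}^n \int_{(a,b)} \sigma_k(\lambda)\, |(Vu)_k(\lambda)|^2 \, d\lambda. \]
Since two Borel measures that agree on every interval coincide, this shows that the spectral measure $\mu_u$ defined by $\mu_u((a,b)) = (E(a,b)u,u)_H$ equals $\bigl( \sum_{k=1}^n \sigma_k\, |(Vu)_k|^2\, {\mathbf{1}}_{[a_k,+\infty)} \bigr)\, d\lambda$ on $\sigma(A) \subset [a_1,+\infty)$.

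With this identification in hand, the conclusion follows from the standard domain characterization of the spectral theorem for the self-adjoint operator $A$ (Proposition~\ref{selfadjoint}): one has $u \in D(A^j)$ if and only if $\int \lambda^{2j}\, d\mu_u < +\infty$. By the measure identity above, this integral equals $\sum_{k=1}^n \int_{[a_k,+\infty)} \sigma_k(\lambda)\, |\lambda^{j}(Vu)_k(\lambda)|^2 \, d\lambda = |M_{p_j} Vu|_\sigma^2$, whose finiteness is precisely statement (ii).

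I expect the main obstacle to be conceptual rather than computational: because $V$ need not be surjective (as noted after Theorem~\ref{V.iso}, even injectivity of $Z$ is unclear), one cannot simply transport the multiplication operator back through an inverse of $V$, so the argument must go through the isometric intertwining together with the spectral theorem rather than through a diagonalizing unitary. An equivalent route that avoids the measure-theoretic identification is to set $u_n := E([a_1,n])u$, observe $u_n \in D(A^j)$ with $\|A^j u_n\|_H^2 = \sum_{k=1}^n \int_{[a_k,n]} \sigma_k\, |\lambda^{j}(Vu)_k|^2\, d\lambda$ by Lemma~\ref{mult.op}(ii) and the isometry, and then use that $(A^j u_n)_n$ is Cauchy in $H$ under hypothesis (ii), together with the closedness of $A^j$ (valid since $A$ is self-adjoint), to conclude that $u \in D(A^j)$.
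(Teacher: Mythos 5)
Your proposal is correct, and it is in fact considerably more complete than what the paper prints. The paper's own proof consists of exactly two lines: the isometry of $V$ (Theorem~\ref{V.iso}) gives $(A^j u, A^j u)_H = |V(A^ju)|_\sigma^2$, and Lemma~\ref{mult.op} turns this into $(A^j u, A^j u)_H = |M_{p_j} Vu|_\sigma^2$ --- which is precisely your argument for (i)$\,\Rightarrow\,$(ii) and nothing more; the converse and the equivalence with (iii) are left to the reader via the announced analogy with Lemma~4.12 and its sequel in \cite{fam2}. You correctly identify that the analytic content sits in (ii)$\,\Rightarrow\,$(i), and both of your routes for it are sound: applying Lemma~\ref{mult.op}(ii) to $\Psi = {\mathbf{1}}_{(a,b)}$ and using that $E(a,b)$ is an orthogonal projection identifies the scalar spectral measure $\mu_u$ with the density $\sum_k \sigma_k |(Vu)_k|^2\,d\lambda$ (two finite Borel measures agreeing on all intervals coincide), after which the spectral-theorem characterization $u \in D(A^j) \Leftrightarrow \int \lambda^{2j}\,d\mu_u < \infty$ closes the loop; alternatively, the truncation $u_n = E([a_1,n])u$ with $A^j u_n = \Psi_n(A)u$ for the bounded function $\Psi_n(\lambda) = \lambda^j {\mathbf{1}}_{[a_1,n]}(\lambda)$, combined with the Cauchy property under (ii) and the closedness of $A^j$, avoids the measure identification entirely. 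Your remark that the lack of surjectivity of $V$ forbids simply conjugating the multiplication operator back through $V^{-1}$ is exactly the right point of caution and is why the argument must pass through the isometric intertwining rather than a unitary diagonalization. In short: where the paper gives detail you agree with it, and where the paper defers to \cite{fam2} you supply a valid argument.
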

\begin{proof}
Due to Theorem~\ref{V.iso}, it holds $(A^j u, A^j u)_H = |V (A^j u)
|_{\sigma}^2$. Now Lemma~\ref{mult.op} implies
\[ (A^j u, A^j u)_H = |M_{p_j} Vu|_{\sigma}^2.
\]
\end{proof}
The above results provide explicit solution formulae for evolution equations
involving the operator $A$. For example for the wave equation $\ddot u + A u =
0$ with $u(0)=u_0$ and $\dot u(0)=0$ a formal solution is given by $u(t)= Z
\cos (\sqrt{\lambda} t) V u_0$. Our aim in the future will be to study
properties of these solutions, as indicated in the introduction.
%
%
%
\section{Appendix}
%
%
%
%
\noindent
The book of J. Weidmann \cite{weid2} describes a general approach to
the spectral theory of systems of Sturm-Liouville equations, which is
in principle applicable to our setting. In this point of view, our problem
is seen as a system of $n$ equations on $(0,+\infty)$ coupled by boundary
conditions in $0$. Thus the kernel of the resolvent is a matrix-valued function
$\mathcal K(\cdot, \cdot, \lambda): (0,+\infty) \times (0,+\infty) \rightarrow
\C^{n \times n}$. The relation to Definition \ref{def.K} is given by
\[ \mathcal K_{ij}(x, x', \lambda) = K(x, x', \lambda) \text{ where } (x,x')
    \in N_i \times N_j \; \widehat{=} \; (0,+\infty) \times (0,+\infty).
\]
The fundamental hypothesis of Weidmann is that the kernel of the resolvent can
be written in the following form:
\begin{equation}\label{ker.weid}
\mathcal K(x,x',\lambda) \ = \ \left\{
                            \begin{array}{ll}
            \sum_{q=1}^{p}
                          \underbrace{
                          \Bigl(
                          \sum_{l=1}^{p} \alpha_{lq} \ w_{l}(x,\lambda)\Bigr)
                                       }_{ =: \  m_{q}^{\alpha}(x,\lambda)}
                                       w_{q}(x',\lambda)^{T}, & \hbox{ for } x' \leq x, \\
            \sum_{q=1}^{p}
                          \underbrace{
                          \Bigl(
                          \sum_{l=1}^{p} \beta_{lq} \ w_{l}(x,\lambda)\Bigr)
                                       }_{ =: \  m_{q}^{\beta}(x,\lambda)}
                                       w_{q}(x',\lambda)^{T}, & \hbox{ for } x' > x,
                            \end{array}
                          \right.
\end{equation}
where $\alpha_{lq}, \beta_{lq} \in \C$ and the $w_{q}: [0,\infty)
\times \mathbb{C} \rightarrow \mathbb{C}^n$ are such that $\{
w_q(\cdot,\lambda) : q = 1, \dots , p\}$ is a fundamental system of
$ \ker (A_{f} - \lambda I),$ the space of generalized eigenfunctions
of $A$. Here $A_{f}: D(A_{f}) \rightarrow C^0(\mathbb{R}) $ is the
formal operator, in our case
$A_{f} = A = (-c_k \cdot \partial^2_x + a_k)_{k=1, \dots, n}$
but $D(A_{f}) = \prod_{k=1}^{n}C^{2}(N_k)$, i.e. the operator $A$
without transmission conditions nor integrability conditions at
$\infty$. Clearly in our case $\dim \bigl( \ker (A_{f} - \lambda I
)\bigr) = 2n$ and thus
$p = 2n$.

In contrast to the typical applications treated in \cite{weid2} as
for instance the Dirac system,  we consider in this paper a \emph{transmission}
problem, i.e. intuitively the components of the $w_q$ are functions
on different domains $N_k$ (while mathematically all $N_k$ are equivalent
to $(0,+\infty)$): the branches of a star. For all such applications
it is highly important to use only generalized eigenfunctions
satisfying the transmission conditions $(T_0)$ and $(T_1)$, for
example Theorem \ref{DAj} would be impossible otherwise.

Supposing the ansatz \eqref{ker.weid} of Weidmann, this is not
possible, what we shall show now.

In Definition~\ref{def.K} we have given an explicit expression for the (unique)
kernel $K$ of the resolvent of $A$, using only elements of
$ \ \ker (A_T - \lambda I)$, where $A_T: D(A_T) \rightarrow H$
satisfies
$A_T = A = (-c_k \cdot \partial^2_x + a_k)_{k=1, \dots, n}$
and $D(A_T) = \prod_{k=1}^{n}C^{2}(N_k) \cap \{(u_k)_{k=1}^n$
satisfies $(T_0),(T_1) \}$. Note that $ \ker (A_T - \lambda I)$ is
the $n$-dimensional space of generalized eigenfunctions of $A$
satisfying the transmission conditions $(T_0)$ and $(T_1)$, but
without integrability conditions at infinity.

Suppose that we have a representation of $K$ as given in
\eqref{ker.weid} satisfying
$$w_q(\cdot,\lambda) \in \ker (A_T - \lambda I),\ q=1, \dots, p, \ \lambda \in \rho(A)$$
and thus we can take $p=n$. Let us fix $x \in N_1$ and $\lambda \in
\rho(A)$. Then the $m_{q}^\beta (x,\lambda) = : m_{q}^\beta \in
\mathbb{C}$ are constants and the expression
\begin{equation*}
    g(x',\lambda) := \sum_{q=1}^{p} m_{q}^\beta w_{q}(x',\lambda)
\end{equation*}
defines a function
\[ g(\cdot,\lambda) : \{ x' \in [ 0, \infty ) : x'  > x  \} \rightarrow
\mathbb{C}^n.
\]
Clearly, the functions
\[ [w_{q}(\cdot,\lambda)]_j: N_j \rightarrow \mathbb{C}
\]
can be uniquely extended to entire functions in $x'$ because they
are linear combinations of $e^{\pm i \xi_j (\lambda) x'}$. Thus the
$[g(\cdot,\lambda) ]_j $ are entire. Our hypothesis
$w_q(\cdot,\lambda) \in \ker (A_T - \lambda I),\ q=1, \dots, p$,
implies thus
\begin{equation}\label{transm.sat}
  g(\cdot,\lambda)\in \ker (A_T - \lambda I),\ q=1, \dots, p.
\end{equation}
But comparing \eqref{ker.weid} with Definition~\ref{def.K} yields
\begin{align*}
  g (x',\lambda) &= F_{\lambda}^{\pm,2}(x) \cdot F_{\lambda}^{\pm,1}(x')
    \text{ for } x' \in N_2, \dots , N_n
\intertext{and}
  g (x',\lambda) &= F_{\lambda}^{\pm,1}(x) \cdot F_{\lambda}^{\pm,2}(x')
    \text{ for } x' \in N_1,
\end{align*}
which makes sense after analytic continuation. Using the assumption
that $g$ satisfies $(T_0)$, we get from these two equalities,
putting $x' = 0 \in \overline{N_2}$ into the first and $x' = 0 \in
\overline{N_1}$ into the second
\[ F_\lambda^{\pm, 2}(x) = F_\lambda^{\pm, 2}(x) F_\lambda^{\pm, 1}(0)
    = g(0, \lambda) = F_\lambda^{\pm, 1}(x) F_\lambda^{\pm, 2}(0) =
    F_\lambda^{\pm, 1}(x).
\]
But inspecting the definitions of the generalized eigenfunctions,
bearing in mind that $x \in N_1$ was arbitrary, this implies
\[ \cos \bigl( \xi_1(\lambda) x \bigr) \pm i \sin \bigl( \xi_1(\lambda)
    x \bigr) = \cos \bigl( \xi_1(\lambda) x \bigr) \pm i s_1(\lambda)
    \sin \bigl( \xi_1(\lambda) x \bigr)
\]
and thus $s_1(\lambda) = 1$. This finally implies $\sum_{j=1}^n c_j
\xi_j(\lambda) = 0$, which is impossible for all real $\lambda <
a_1$, since then $\xi_j(\lambda)$ is purely imaginary. Furthermore,
$\sum_{j=1}^n c_j \xi_j$ is analytic, so this equality can, if ever,
only be fulfilled on a discrete set of $\lambda \in \C$.

We have thus proven:
%
\begin{theo} \label{comp.weidm}
Representation \eqref{ker.weid} of the kernel $\mathcal K$ of the
resolvent of $A$ is not possible, using exclusively generalized
eigenfunctions
\[ w_q(\cdot,\lambda) \in \ker (A_T - \lambda I),\ q=1, \dots, p.
\]
\end{theo}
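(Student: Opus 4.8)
The plan is to derive a contradiction from the assumption that the resolvent kernel $\mathcal K$ admits a Weidmann-type representation \eqref{ker.weid} built exclusively from generalized eigenfunctions $w_q(\cdot,\lambda) \in \ker(A_T - \lambda I)$ that satisfy the transmission conditions $(T_0)$ and $(T_1)$. The key observation is that $\ker(A_T - \lambda I)$ is $n$-dimensional, so we may take $p = n$ in the ansatz. First I would fix $x \in N_1$ and $\lambda \in \rho(A)$; then the coefficients $m_q^\beta(x,\lambda)$ appearing in the $x' > x$ branch of \eqref{ker.weid} become mere constants, and the linear combination $g(x',\lambda) := \sum_{q=1}^p m_q^\beta w_q(x',\lambda)$ defines a function on $\{x' > x\}$ which, as a finite combination of elements of $\ker(A_T - \lambda I)$, again lies in that kernel and hence satisfies $(T_0)$ and $(T_1)$.

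The crux of the argument is to compare this abstract $g$ against the explicit kernel of Definition~\ref{def.K}. Since each component $[w_q(\cdot,\lambda)]_j$ is a linear combination of $e^{\pm i \xi_j(\lambda)x'}$, it extends uniquely to an entire function of $x'$, so $g(\cdot,\lambda)$ extends by analytic continuation to all of $N$. Matching \eqref{ker.weid} with the explicit formula for $K$ on the region $x' > x$ gives $g(x',\lambda) = F_\lambda^{\pm,2}(x)\,F_\lambda^{\pm,1}(x')$ on the branches $N_2,\dots,N_n$, while on $N_1$ (after analytic continuation past the diagonal) it reads $g(x',\lambda) = F_\lambda^{\pm,1}(x)\,F_\lambda^{\pm,2}(x')$. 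Evaluating both expressions at the node, i.e. putting $x' = 0$ in the appropriate branch, and invoking $(T_0)$ for $g$ together with $F_\lambda^{\pm,k}(0) = 1$, I would deduce the chain of equalities $F_\lambda^{\pm,2}(x) = g(0,\lambda) = F_\lambda^{\pm,1}(x)$ for the arbitrary $x \in N_1$.

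Finally I would extract the contradiction from this forced identity of the two generalized eigenfunctions on $N_1$. Reading off the explicit definitions from Definition~\ref{gen.eig}, the equality $F_\lambda^{\pm,1}(x) = F_\lambda^{\pm,2}(x)$ on $\overline{N_1}$ becomes
\[ \cos\bigl(\xi_1(\lambda)x\bigr) \pm i\sin\bigl(\xi_1(\lambda)x\bigr) = \cos\bigl(\xi_1(\lambda)x\bigr) \pm i s_1(\lambda)\sin\bigl(\xi_1(\lambda)x\bigr), \]
which, since $x$ is arbitrary and $\sin(\xi_1 x)$ does not vanish identically, forces $s_1(\lambda) = 1$. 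By the definition of $s_1$ this is equivalent to $\sum_{j=1}^n c_j \xi_j(\lambda) = 0$. I would then close the argument by noting that for real $\lambda < a_1$ every $\xi_j(\lambda)$ is purely imaginary with the same sign of imaginary part, so the sum cannot vanish, while by analyticity of $\lambda \mapsto \sum_j c_j\xi_j(\lambda)$ the zero set is at most discrete in $\C$ — either way the identity cannot hold on the open resolvent set, yielding the desired contradiction.

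The main obstacle I anticipate is the careful justification of the analytic continuation step: the explicit kernel $K$ is defined piecewise with a genuine non-smoothness along the diagonal $\{x = x'\}$ (see Figure~\ref{NxN}), so the claim that the $x' > x$ piece of $g$ continues to the expression valid on $N_1$ requires that the entire extension of the $w_q$ be used rather than the kernel's own piecewise definition. Making precise that $g(\cdot,\lambda)$, being an element of $\ker(A_T - \lambda I)$, is genuinely entire and therefore agrees with $F_\lambda^{\pm,1}(x)F_\lambda^{\pm,2}(\cdot)$ on $N_1$ by analytic continuation — and not merely with the kernel's discontinuous prescription — is the delicate point on which the whole contradiction hinges.
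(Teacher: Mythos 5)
Your proposal is correct and follows essentially the same route as the paper's own proof in the appendix: fixing $x \in N_1$, forming $g(x',\lambda) = \sum_q m_q^\beta w_q(x',\lambda) \in \ker(A_T - \lambda I)$, extending it analytically, comparing with the explicit kernel of Definition~\ref{def.K} on $N_1$ versus $N_2,\dots,N_n$, and using $(T_0)$ at the node to force $s_1(\lambda)=1$, i.e.\ $\sum_j c_j \xi_j(\lambda) = 0$, which is impossible for $\lambda < a_1$ and at most discrete by analyticity. The delicate point you flag about the analytic continuation is exactly the one the paper also relies on, and your treatment of it is consistent with theirs.
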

%
The reason for this is, shortly speaking, the rigidity of
\eqref{ker.weid} caused by the use of the same linear combinations
of generalized eigenfunctions above the diagonals of all $N_j \times
N_k$ (idem below). This is not compatible with the fact that the
kernel is non-smooth only on the main diagonals, i.e. the diagonals
of $N_j \times N_j$, cf. Definition~\ref{def.K} and
Figure~\ref{NxN}.

This means that the approach of J. Weidmann in \cite{weid2}, when
applied to problems on the star-shaped domain, does not sufficiently
take into account its non-manifold character. The resulting
expansion formulae would use generalized eigenfunctions which are in
a sense incompatible with the geometry of the domain. This would
have undesirable consequences: for example, the important feature of
Theorem \ref{DAj} that the belonging of $u$ to $D(A^j)$ is expressed
by the decay of the components of $Vu$ would be impossible, due to
artificial singularities in the expansion formula.


\end{document}